\newtheorem{thm}{Theorem}[section]
\newtheorem{lem}{Lemma}[section]
\newtheorem{cor}{Corollary}[section]
\newtheorem{prop}{Proposition}[section]
\newtheorem{rem}{Remark}[section]
\begin{document}
\numberwithin{equation}{section}

 \title[The Laguerre calculus on the nilpotent Lie groups of step two]{The Laguerre calculus on the nilpotent Lie groups of step two
}
\author {Der-Chen Chang, Irina Markina  and Wei Wang
 }
\begin{abstract}
The  Laguerre calculus is widely used for the inversion of differential operators on the Heisenberg group. We extend the Laguerre calculus for nilpotent groups of step two, and test it in the determining of the fundamental solution of the sub-Laplace operator. We also apply  it  to find the Szeg\"o kernels of the projection operators to a kind of regular functions   on the quaternion Heisenberg group.
\end{abstract}

\address{Department of Mathematics and Department of Computer Science,
Georgetown University, Washington D.C. 20057, USA
\newline
Graduate Institute of Applied Science and Engineering, Fu Jen Catholic University, Taipei 242, Taiwan, ROC}
\email{chang@georgetown.edu}
\address{Department of Mathematics, University of Bergen, NO-5008 Bergen, Norway}
\email{irina.markina@uib.no}
\address{Department of Mathematics, Zhejiang University, Zhejiang 310027, PR China}
\email{wwang@zju.edu.cn.}
\thanks{The first author is partially supported by an NSF grant DMS-1408839 and a McDevitt Endowment Fund at Georgetown University. The second author is  partially supported by NFR-DAAD project 267630/F10 and ISP NFR  project 239033/F20.
  The third author is  partially supported by National Nature Science Foundation in China (No.   11571305).
}
 \maketitle

\section{Introduction}
\smallskip
Let us start with a beautiful idea of Mikhlin, contained in his 1936 study of convolution operators on $\mathbb{R}^2$ (see~\cite{Mik}). Let \textbf{F} denote a principal value convolution operator on $\mathbb{R}^2$:
$$
\textbf{F}(\phi)(x) \,=\, \lim_{\varepsilon \to 0} \int_{|y|>\varepsilon}F(y)\phi(x-y)dy,
$$
where $\phi \in C^\infty_0 (\mathbb{R}^2)$ and $F \in C^\infty(\mathbb{R}^2 \setminus \{(0,0)\})$ is homogeneous of degree $-2$ with the vanishing mean value, {\it i.e.,} $F(\lambda y) = \lambda ^{-2}F(y)$ for $\lambda > 0$ and $\int_{|y|=1}F(y)dy=0$. It follows that
$$
F(y)\, =\, \frac{f(\theta)}{r^2},\qquad  y \,=\,  y_1+iy_2 = re^{i\theta},
$$
where $f(\theta) = \sum_{k\in\mathbb{Z}, k\neq 0}f_ke^{ik\theta}$. Suppose that $g$ is another smooth function on $[0,2\pi]$ with $g(\theta) = \sum_{m\in\mathbb{Z}, m\neq 0}g_me^{im\theta}$. Then $g$ induces a principal value convolution operator \textbf{G} on $\mathbb{R}^2$ with kernel $G=\frac{g(\theta)}{r^2}$. In~\cite{Mik}, Mikhlin found the following identity:
  \begin{equation}
\label{eq:Mikhlin}
\frac{|k|i^{-|k|}}{2\pi} \frac{e^{ik\theta}}{r^2}\,\ast\, \frac{|m|i^{-|m|}}{2\pi} \frac{e^{im\theta}}{r^2} \,=\, \frac{|k+m|i^{-|k+m|}}{2\pi} \frac{e^{i(k+m)\theta}}{r^2}.\end{equation}
Here $\ast$ stands for the Euclidean convolution.
Denote the ``symbol" $\sigma(F)$ of $F$ as
$$
\sigma(F)\, =\, \sum_{k\in\mathbb{Z}, k \neq 0}\left (\frac{|k|i^{-|k|}}{2\pi}\right)^{-1} f_ke^{ik\theta}.
$$
With this notation, one may rewrite (\ref{eq:Mikhlin}) as follows:
$$
\sigma(F \ast G) \,=\,  \sigma(F) \cdot \sigma(G).
$$
It is natural to seek a similar calculus in noncommutative setting.
The simplest and most natural noncommutative analogue of the algebra of principal value convolution operators in $\mathbb{R}^n$ is the left-invariant principal value convolution operators on the $n$-dimensional Heisenberg group~$\mathcal{{H}}_n$. Mikhlin's symbol is replaced by a matrix, or tensor, and commutative symbol multiplication becomes noncommutative matrix or tensor multiplication.
This is the so-called Laguerre calculus. Laguerre calculus is the symbolic tensor calculus originally  induced by the Laguerre functions on the Heisenberg group $\mathcal{H}_n$. It was first introduced on $\mathcal{H}_1$ by Greiner~\cite{G} and later extended to
$\mathcal{H}_n$ and $\mathcal{H}_n\times \mathbb{R}^m$ by Beals, Gaveau, Greiner and Vauthier~\cite{BGV, BGGV}.
The Laguerre functions have been used in the study of the twisted convolution, or equivalently, the Heisenberg convolution for several decades.
For example, Geller~\cite{Ge} found a formula that expressed the group Fourier transform of radial functions on the isotropic Heisenberg group, {\it i.e.,} functions $f(z,t)$ that depend only on $|z|^2$ and $t$  in terms of Laguerre transform, and Peetre~\cite{Pe} derived the relation between the Weyl transform and Laguerre calculus.
The connection between Laguerre functions and Fourier analysis on the isotropic $\mathcal{H}_n$ has been exploited in the study of various translation-invariant operators on $\mathcal {H}_n$ by Folland-Stein~\cite{FS}, Jerison~\cite{J}, de Michele-Mauceri~\cite{MM} and Nachman~\cite{N}.  See also Tie \cite{T2},   Chang-Chang-Tie \cite{CCT}, Chang-Greiner-Tie \cite{CGT} for the application to find the inversion of differential operators, and Chang-Tie \cite{CT},  Strichartz \cite{S} for the study of the associated spectral projection operators.

The present paper is   twofold. The first part of the paper can be considered as a continuation of~\cite{BGV, BGGV, BG, BCE, BCT}.
We shall generalize results obtained on the Heisenberg group to general nilpotent Lie groups of step $2$. The second part contains some applications.

  A connected and  simply connected  nilpotent Lie group $\mathcal{N}$ of step $2$ is the vector space $\mathbb{R}^{{m}}\times\mathbb{R}^r$ with the group multiplication given by
  \begin{equation} \label{eq:multipler} (x,t)\cdot (y ,s )=\left(x+y ,
t +s  + 2  B(x,y )  \right
),\end{equation}
  where
 $B: \mathbb{R}^{m}\times \mathbb{R}^{m}\rightarrow
\mathbb{R}^r$ is a skew-symmetric mapping given by
 \begin{equation}\label{eq:Phi-B}
B(x,y )=\left(B^1(x,y ),\ldots,B^r(x,y )\right),\qquad B^\beta(x,y )=  \sum_{ k,l=1}^{m}    B^\beta_{kl}  x_ky_l.  \end{equation}
For simplicity, we do not consider the degenerate case and assume $m=2n$ in this paper. For any $\tau\in  \mathbb{R}^r \setminus\{0\}$, we consider the bilinear form
\begin{equation*}
  B^\tau( x,y):=\sum_{\beta=1}^r \tau_\beta B^\beta( x,y).
\end{equation*}

We show in Section 2  that for $\tau\in S^{r-1}\setminus E$, where $E\subset  S^{r-1}$ is     of Hausdorff dimension    at most $   r-2 $, there exists an  orthonormal basis  locally normalizing $B^\tau$. The  skew-symmetric bilinear form $B^\tau$ can be written in a normal form with respect to this local orthonormal basis
$\{v^\tau_1,\ldots, v^\tau_{{2n}}\}$, which depends on $\tau$ smoothly,  as
\begin{equation}-B^\tau\left(v^\tau_{2j-1},v^\tau_{2 j}\right)
=B^\tau\left(v^\tau_{2 j},v^\tau_{2j-1}\right )
= \mu_j(\tau),\label{eq:mu-j}
\end{equation}$ j=1,2,\ldots n$,  and $B^\tau(v^\tau_k,v^\tau_l)
=0$ for all other choices of subscripts.
   We  can write $y\in \mathbb{R}^{2n}$ in terms of the basis $\{v^\tau_k\}$ as
\begin{equation}\label{eq:y'-''}
  y=\sum_{k=1}^{2n} y_k^\tau v^\tau_k    \in \mathbb{R}^{2n}
\end{equation} for some $y_1^\tau,\ldots,y_{2n}^\tau\in \mathbb{R}$. We call $(y_1^\tau,\ldots,y_{2n}^\tau)$ the {\it $\tau$-coordinates} of the point $y\in \mathbb{R}^{2n}$.

In Section 3, we discuss the {\it twisted convolution} of two functions  $f,g\in L^1(\mathbb{R}^{{2n}})$, which is defined as
\begin{equation*}\begin{split}
    f*_\tau g( y)&=\int_{\mathbb{R}^{2n}} e^{-2iB^\tau(y,x)} f(y-x) g(x)dx,
\end{split}\end{equation*}for any fixed $\tau\in \mathbb{R}^r $.  For a function $ {F}\in L^1(\mathcal{N})  $, denote by $\widetilde{ {F}}_\tau(y )$
the partial Fourier transformation of $ {F}$ (see (\ref{eq:partial-Fourier}) for definition). It is in $L^2(\mathbb{R}^{{2n}})$ for almost all $\tau$ if $ {F}\in L^1(\mathcal{N}) \cap L^2(\mathcal{N})$.

\begin{prop}\label{prop:convolution} For any $\varphi, \psi\in  L^1(\mathcal{N})$, we have\begin{equation}\label{eq:twisted-convolution}
    \widetilde{\varphi}_\tau*_\tau \widetilde{\psi}_\tau =(\widetilde{  \varphi*\psi } )_\tau.
\end{equation}
\end{prop}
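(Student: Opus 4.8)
The plan is to unwind both sides of \eqref{eq:twisted-convolution} into explicit integrals over $\mathcal{N}$ and then to match them using Fubini's theorem together with an affine change of variables in the central variable $t\in\mathbb{R}^{r}$. First I would write the group convolution on $\mathcal{N}$ explicitly. From the multiplication law \eqref{eq:multipler} and the skew-symmetry of $B$, which forces $B(y,y)=0$ and hence $(y,s)^{-1}=(-y,-s)$, one obtains
\[
  (\varphi*\psi)(x,t)=\int_{\mathbb{R}^{r}}\int_{\mathbb{R}^{2n}}\varphi\bigl(x-y,\;t-s-2B(x,y)\bigr)\,\psi(y,s)\,dy\,ds .
\]
Applying the partial Fourier transform \eqref{eq:partial-Fourier} in the $t$-variable to this identity produces a triple integral in $(t,s,y)$.

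Next I would justify the use of Fubini's theorem. Since $\varphi,\psi\in L^1(\mathcal{N})$, the integrand of that triple integral is absolutely integrable on $\mathbb{R}^{r}\times\mathbb{R}^{r}\times\mathbb{R}^{2n}$ for every $\tau$, so the order of integration may be interchanged freely; moreover the estimate $|\widetilde{\varphi}_{\tau}(y)|\le\int_{\mathbb{R}^{r}}|\varphi(y,t)|\,dt$ shows $\widetilde{\varphi}_{\tau},\widetilde{\psi}_{\tau}\in L^1(\mathbb{R}^{2n})$, and the $\tau$-twisted convolution of two $L^1(\mathbb{R}^{2n})$ functions is again in $L^1(\mathbb{R}^{2n})$, so both sides of \eqref{eq:twisted-convolution} make sense pointwise a.e.

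The computational heart is the substitution $t\mapsto t'=t-s-2B(x,y)$ in the innermost integral: its Jacobian is $1$ and it decouples the $\varphi$- and $\psi$-integrals. The exponential coming from \eqref{eq:partial-Fourier} factors accordingly, the cross term producing a phase built from $\sum_{\beta}\tau_{\beta}B^{\beta}(x,y)$, which by \eqref{eq:Phi-B} and the definition of $B^{\tau}$ equals $B^{\tau}(x,y)$. Carrying out the $t'$- and $s$-integrations then leaves a single integral over $\mathbb{R}^{2n}$,
\[
  (\widetilde{\varphi*\psi})_{\tau}(x)=\int_{\mathbb{R}^{2n}} e^{-2iB^{\tau}(x,y)}\,\widetilde{\varphi}_{\tau}(x-y)\,\widetilde{\psi}_{\tau}(y)\,dy ,
\]
which, read against the definition of $*_{\tau}$, is precisely $\bigl(\widetilde{\varphi}_{\tau}*_{\tau}\widetilde{\psi}_{\tau}\bigr)(x)$.

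I do not expect any genuine obstacle here. The only points that need care are the consistent bookkeeping of the sign conventions — in \eqref{eq:partial-Fourier}, in $(y,s)^{-1}=(-y,-s)$, and in the chosen orientation of the group convolution $\varphi*\psi$; with the opposite orientation one additionally needs the change of variable $y\mapsto x-y$ together with the elementary identity $B^{\tau}(x,x-y)=-B^{\tau}(x,y)$, which follows from bilinearity and $B^{\tau}(x,x)=0$ — so that the surviving phase is exactly $-2B^{\tau}$, together with the routine Fubini justification, which if one prefers may first be carried out for $\varphi,\psi\in C_c^{\infty}(\mathcal{N})$ and then extended to all of $L^1(\mathcal{N})$ by density.
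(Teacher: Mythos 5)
Your proposal is correct and follows essentially the same route as the paper: write the group convolution explicitly, apply the partial Fourier transform in the central variable, perform the unimodular shift $t\mapsto t-s-2B(x,y)$ to decouple the integrals, and read off the phase $e^{-2iB^{\tau}(x,y)}$ as the twisted-convolution kernel. The only difference is that you spell out the Fubini justification and the $L^1$ bounds, which the paper leaves implicit.
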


We use $\tau$-coordinates to define Laguerre distributions and establish their properties in Section~\ref{sec:laguerre}. Let
$L_k^{(p)}$ be generalized Laguerre polynomials. It is known \cite{BCT} that
\begin{equation}\label{eq:l-L}
l_k^{(p)}(\sigma):=\left [\frac {\Gamma(k+1)}{\Gamma(k+p+1)} \right]^{\frac 12}L_{k}^{(p)}( \sigma)\sigma^{\frac p2} e^{-\frac \sigma2},
\quad
\end{equation} where $\sigma\in [0,\infty),  k, p\in \mathbb {Z}_{\geq 0} $,  constitute  an orthonormal basis of $L^2([0,\infty),d\sigma)$ for fixed  $p$.
We define the  distributions  $ \mathscr  L_{k}^{(p)}$ on $\mathbb{R}^2 \times \mathbb{R}^r$  via their partial Fourier transformations
\begin{equation}\label{eq:exponential-Laguerre0}
 \widetilde{\mathscr  L}_{k}^{(p)}(y,\tau ): =\frac {2 |\tau|}{\pi} ({\rm sgn}\, p)^p l_k^{(|p|)}(2|\tau| |y|^2)e^{ip\theta},
\end{equation}
where $p\in \mathbb{Z}$,  $ y=(y_1,y_2)\in \mathbb{R}^2, y=y_1+iy_2=|y|e^{i\theta},  \tau\in \mathbb{R}^r  $. Then we can define  the {\it exponential Laguerre distribution} ${ \mathscr L}_{\mathbf{k} }^{(\mathbf{p} )}(y,s)$ on $\mathbb{R}^{2n+r}$ via their partial Fourier transformations
\begin{equation}\label{eq:exponential-Laguerre}
 \widetilde{ \mathscr L}_{\mathbf{k} }^{(\mathbf{p} )}(y,\tau ):=\prod_{j=1}^n \mu_j(\dot{\tau})\widetilde{\mathscr  L}_{k_j}^{(p_j)} \left(\sqrt {\mu_j(\dot{\tau})}{\mathbf y^\tau_j}, \tau\right),
\end{equation}
where $ y\in \mathbb{R}^{2n},\tau\in \mathbb{R}^r, \mathbf{p}=(p_1,\ldots p_n)\in \mathbb{Z}^n  , $ $\mathbf{k}=(k_1,\ldots k_n)\in \mathbb{Z}^n_{\geq 0} $, and
\begin{equation}\label{eq:mu-dot}  \begin{split}\dot{ \tau}&=\frac \tau{|\tau|}\in S^{r-1},\qquad
   \mu_j(\tau)=|\tau|\mu_j(\dot{\tau}),\\
{\mathbf y^{\tau}_j}& = \left ( y_{2j-1}^\tau, y_{2 j}^\tau\right)\in \mathbb{R}^2,\qquad  {j}=1,\ldots,n.
\end{split}\end{equation}

The definition of $
 \widetilde{ \mathscr L}_{\mathbf{k} }^{(\mathbf{p} )}(y,\tau )$ above depends on the choice of  local orthonormal basis  normalizing $B^\tau$, and in that local neighbourhood, it smoothly depends on $y$ and $\tau$.
Note that $
 \widetilde{ \mathscr L}_{\mathbf{k} }^{(\mathbf{p} )}(y,\tau )$  is  only defined for  $\tau\in \mathbb{R}^r $ such that  $B^\tau $ is non-degenerate. In the degenerate case, if $ \mu_j(\tau)=0$ for some $j$, we use ordinary Fourier transformation in the direction spanned by $v^\tau_{2j-1}, v^\tau_{2 j} $. For simplicity, we assume  that  $B^\tau $ is non-degenerate for almost all   $\tau\in \mathbb{R}^r $ in this paper. In this case $
 \widetilde{ \mathscr L}_{\mathbf{k} }^{(\mathbf{p} )}(y,\tau )$ is locally integrable and so ${ \mathscr L}_{\mathbf{k} }^{(\mathbf{p} )} $ as a distribution is well defined.

On the other hand, for any fixed $\tau\in \mathbb{R}^r\setminus \{0\}$  with $B^\tau $  non-degenerate, $  \widetilde{ \mathscr L}_{\mathbf{k} }^{(\mathbf{p} )}(\cdot,\tau) $ for fixed $\tau,\mathbf{k} $ and $ \mathbf{p}  $  is a Schwarz function over $\mathbb{R}^{2n}$, and
$\{ \widetilde{ \mathscr L}_{\mathbf{k} }^{(\mathbf{p} )}(\cdot,\tau)\}_{  \mathbf{p} \in \mathbb{Z}^n  ,  \mathbf{k} \in \mathbb{Z}^n_{\geq 0}}   $ constitute   an orthogonal  basis of $L^2(\mathbb{R}^{2n})$   nicely behaving under the    twisted convolution.
\begin{prop} \label{prop:product} For $\mathbf{k},  \mathbf{p},\mathbf{q}, \mathbf{m}\in \mathbb{Z}_+^n$, we have \begin{equation*}
\label{eq:product}
\widetilde{\mathscr  L}_{(\mathbf{k}\wedge \mathbf{p})-\mathbf{1} }^{(\mathbf{p} -\mathbf{k})}\ast_\tau\widetilde{\mathscr  L}_{(\mathbf{q}\wedge \mathbf{m})-\mathbf{1} }^{(\mathbf{q} -\mathbf{m})}
      =\delta_{\mathbf{k}}^{(\mathbf{q})}\widetilde{\mathscr  L}_{(\mathbf{p}\wedge \mathbf{m})-\mathbf{1} }^{(\mathbf{p} -\mathbf{m})},
         \end{equation*}
    where     $\mathbf{p}\wedge \mathbf{m}-\mathbf{1}:=( \min(k_1,p_1)-1,\dots, \min(k_n,p_n)-1) $ and $\delta_{ \mathbf{{k}}}^{( \mathbf{{q}})} $ is the Kronecker delta function.
         \end{prop}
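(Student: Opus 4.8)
The plan is to use the $\tau$-coordinates to decouple the $2n$-dimensional twisted convolution into $n$ planar ones and then reduce each to the classical Laguerre identity on the Heisenberg group $\mathcal H_1$. In the $\tau$-coordinates \eqref{eq:y'-''} the normal form \eqref{eq:mu-j} gives $B^\tau(y,x)=\sum_{j=1}^{n}\mu_j(\tau)\,\omega_0(\mathbf y^\tau_j,\mathbf x^\tau_j)$, with $\omega_0(u,v)=u_1v_2-u_2v_1$ the area form on $\mathbb R^2$ and $\mu_j(\tau)=|\tau|\mu_j(\dot\tau)$, so the oscillatory kernel factors as $e^{-2iB^\tau(y,x)}=\prod_{j}e^{-2i\mu_j(\tau)\omega_0(\mathbf y^\tau_j,\mathbf x^\tau_j)}$. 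By its definition \eqref{eq:exponential-Laguerre}, $\widetilde{\mathscr L}_{\mathbf k}^{(\mathbf p)}(\cdot,\tau)$ is a product whose $j$-th factor depends only on the block $\mathbf y^\tau_j\in\mathbb R^2$; since $\{v^\tau_k\}$ is orthonormal, Lebesgue measure on $\mathbb R^{2n}$ factors as $\prod_j d\mathbf x^\tau_j$, so $\ast_\tau$ of such product functions splits into the product over $j$ of two-dimensional twisted convolutions with parameter $\mu_j(\tau)$. All integrals converge absolutely, each $\widetilde{\mathscr L}_{\mathbf k}^{(\mathbf p)}(\cdot,\tau)$ being Schwartz for fixed nondegenerate $\tau$ while the kernel is smooth and bounded. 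It therefore suffices to prove the scalar identity for the functions $\mu_j(\dot\tau)\widetilde{\mathscr L}^{(p_j)}_{k_j}(\sqrt{\mu_j(\dot\tau)}\,\cdot\,,\tau)$ under the planar $\mu_j(\tau)$-twisted convolution.

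Next I would normalize the parameter. The substitution $\mathbf y\mapsto\mathbf y/\sqrt{|\mu_j(\dot\tau)|}$ turns the $\mu_j(\tau)$-twisted convolution of the rescaled functions into the $|\tau|$-twisted convolution of the plain functions $\widetilde{\mathscr L}^{(p)}_{k}(\cdot,\tau)$ on $\mathbb R^2$ (a reflection of one coordinate handles $\mu_j(\dot\tau)<0$, replacing $\omega_0$ by $-\omega_0$, which is undone by $e^{ip\theta}\mapsto e^{-ip\theta}$), and a further dilation by $\sqrt{2|\tau|}$ normalizes the parameter to a fixed value. The prefactor $\tfrac{2|\tau|}{\pi}$ in \eqref{eq:exponential-Laguerre0} together with the Jacobians of these two dilations are exactly calibrated so that every $\tau$-dependent constant cancels, leaving an identity solely among the planar functions $\tfrac1\pi l^{(|p|)}_{k}(|y|^2)\,({\rm sgn}\,p)^p e^{ip\theta}$; making this cancellation explicit is the purpose of the normalization in \eqref{eq:exponential-Laguerre0}--\eqref{eq:exponential-Laguerre}.

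It then remains to prove, for the planar twisted convolution at the fixed parameter, that the reindexed Laguerre functions $\ell_{a,b}:=\tfrac1\pi l^{(|b-a|)}_{\min(a,b)-1}(|y|^2)\,({\rm sgn}(b-a))^{b-a}e^{i(b-a)\theta}$, $a,b\in\mathbb Z_+$, satisfy $\ell_{k,p}\ast\ell_{m,q}=\delta_k^q\,\ell_{m,p}$. This is the classical Laguerre calculus on $\mathcal H_1$: under the Weyl transform the twisted convolution becomes composition of operators on $L^2(\mathbb R)$, and $\ell_{a,b}$ corresponds, up to the fixed normalization, to the special Hermite function $\Phi_{b-1,a-1}$, that is, to the rank-one operator $v\mapsto\langle v,h_{a-1}\rangle\,h_{b-1}$ built from the Hermite functions $h_0,h_1,\dots$; the stated identity is then the matrix-unit relation $\Phi_{\alpha\beta}\ast\Phi_{\gamma\delta}=\delta_{\beta\gamma}\Phi_{\alpha\delta}$. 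One obtains it either directly from \cite{G,BGGV,BCT} after the reindexing, or self-contained from the Laguerre generating identity $\sum_k L^{(p)}_k(\sigma)w^k=(1-w)^{-p-1}e^{-\sigma w/(1-w)}$: form the bivariate generating function of the $\ell_{a,b}$, observe it is a complex Gaussian in $y$, compute the twisted convolution of two such Gaussians (an explicit Gaussian integral), and match coefficients in the generating variables.

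The conceptual steps — decoupling into symplectic planes and reducing to $\mathcal H_1$ — are routine. The real work, and the main obstacle, is the bookkeeping: checking that the constant $\tfrac{2|\tau|}{\pi}$ and the two dilation Jacobians cancel exactly, and tracking the index shifts $\min(\cdot,\cdot)-1$ together with the sign factors $({\rm sgn}\,p)^p$ through the cases $p>0$, $p=0$ and $p<0$, so that the single uniform formula of the proposition holds for all exponents in $\mathbb Z^n$ and not merely in $\mathbb Z^n_{\ge0}$.
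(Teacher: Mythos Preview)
Your approach is essentially the paper's: use the $\tau$-coordinates and the normal form \eqref{eq:J} to factor $e^{-2iB^\tau(y,x)}$ and the Laguerre distributions into $n$ planar blocks, rescale each block by $\sqrt{\mu_j(\dot\tau)}$ to reduce to the $|\tau|$-twisted convolution on $\mathbb R^2$, and then invoke the classical $\mathcal H_1$ identity (Proposition~\ref{prop:product-W}, i.e.\ \cite[Theorem~1.3.4]{BCT}). The only differences are cosmetic: the paper keeps the parameter $|\tau|$ rather than making your second dilation to a fixed value, and it simply cites \cite{BCT} for the planar case instead of sketching the Weyl-transform/generating-function alternatives you mention; also note that by Proposition~\ref{prop:orthonormal-basis} one has $\mu_j(\dot\tau)\ge 0$, so your reflection for the case $\mu_j(\dot\tau)<0$ is not needed.
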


         If we assume   that  $B^\tau $ is non-degenerate for almost all   $\tau\in \mathbb{R}^r $ and $ {F}\in L^1(\mathcal{N}) \cap L^2(\mathcal{N})$, then for almost all $\tau$, $\widetilde{ {F}}_\tau(y )\in L^2(\mathbb{R}^{2n})$
has the Laguerre expansion
\begin{equation*}
   \widetilde{ {F}}_\tau(y )= \sum_{\mathbf{p},\mathbf{k} \in \mathbb{Z}_+^n }  {F}_{\mathbf{k}}^{\mathbf{p}}(\tau ) \widetilde{ \mathscr L}_{\mathbf{p}\wedge\mathbf{k}-1 }^{(\mathbf{p}-\mathbf{k} )}(y,\tau),
\end{equation*}
 with $\sum_{\mathbf{p},\mathbf{k} \in \mathbb{Z}_+^n }^\infty  |{F}_{\mathbf{k}}^{\mathbf{p}}(\tau )|^2<\infty$,  and the Laguerre  tensor of $ {F}$ is defined as
   \begin{equation*}
      \mathcal{ M}_\tau( F  ):=\left( {F}_{\mathbf{k}}^{\mathbf{p}}(\tau )\right )_{\mathbf{p},\mathbf{k} \in \mathbb{Z}_+^n } .
   \end{equation*}

 The following theorem  is the core of the Laguerre calculus on the nilpotent Lie group  $\mathcal{N}$ of step two.
\begin{thm}\label{thm:Laguerre calculus } Suppose that    $B^\tau $ is non-degenerate for almost all   $\tau\in \mathbb{R}^r $.  For $ {F}, G \in L^1(\mathcal{N})\cap L^2(\mathcal{N})$, we have
\begin{equation*}
 \mathcal{ M}_\tau(   F *  {G} )= \mathcal{ M}_\tau(  {F}  )\cdot  \mathcal{ M}_\tau(  G )
\end{equation*}for   almost all   $\tau\in \mathbb{R}^r $.
\end{thm}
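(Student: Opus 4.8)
The plan is to reduce the theorem, via the partial Fourier transform, to the twisted-convolution statement of Proposition~\ref{prop:convolution} and then to the matrix-multiplication identity encoded in the Laguerre expansion. First I would take $F,G\in L^1(\mathcal{N})\cap L^2(\mathcal{N})$ and apply the partial Fourier transform in the central variable: by Proposition~\ref{prop:convolution} we have $\widetilde{F*G}_\tau=\widetilde{F}_\tau *_\tau \widetilde{G}_\tau$ for almost every $\tau$. Since $F,G\in L^2(\mathcal{N})$, Plancherel in the central variable gives $\widetilde{F}_\tau,\widetilde{G}_\tau\in L^2(\mathbb{R}^{2n})$ for a.e.\ $\tau$, so for such $\tau$ all three functions admit Laguerre expansions in the orthogonal basis $\{\widetilde{\mathscr L}_{\mathbf{p}\wedge\mathbf{k}-\mathbf{1}}^{(\mathbf{p}-\mathbf{k})}(\cdot,\tau)\}$, with square-summable coefficients. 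Write
\begin{equation*}
\widetilde{F}_\tau=\sum_{\mathbf{p},\mathbf{k}}F_{\mathbf{k}}^{\mathbf{p}}(\tau)\,\widetilde{\mathscr L}_{\mathbf{p}\wedge\mathbf{k}-\mathbf{1}}^{(\mathbf{p}-\mathbf{k})}(\cdot,\tau),\qquad
\widetilde{G}_\tau=\sum_{\mathbf{q},\mathbf{m}}G_{\mathbf{m}}^{\mathbf{q}}(\tau)\,\widetilde{\mathscr L}_{\mathbf{q}\wedge\mathbf{m}-\mathbf{1}}^{(\mathbf{q}-\mathbf{m})}(\cdot,\tau).
\end{equation*}

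Next I would compute $\widetilde{F}_\tau*_\tau\widetilde{G}_\tau$ by expanding termwise and invoking the product formula of Proposition~\ref{prop:product}: the term indexed by $(\mathbf{p},\mathbf{k})$ twisted-convolved with the term indexed by $(\mathbf{q},\mathbf{m})$ produces $\delta_{\mathbf{k}}^{(\mathbf{q})}\,F_{\mathbf{k}}^{\mathbf{p}}(\tau)\,G_{\mathbf{m}}^{\mathbf{q}}(\tau)\,\widetilde{\mathscr L}_{\mathbf{p}\wedge\mathbf{m}-\mathbf{1}}^{(\mathbf{p}-\mathbf{m})}(\cdot,\tau)$. Summing over $\mathbf{q},\mathbf{k}$ collapses the Kronecker delta and yields
\begin{equation*}
\widetilde{F}_\tau*_\tau\widetilde{G}_\tau=\sum_{\mathbf{p},\mathbf{m}}\Bigl(\sum_{\mathbf{k}}F_{\mathbf{k}}^{\mathbf{p}}(\tau)\,G_{\mathbf{m}}^{\mathbf{k}}(\tau)\Bigr)\widetilde{\mathscr L}_{\mathbf{p}\wedge\mathbf{m}-\mathbf{1}}^{(\mathbf{p}-\mathbf{m})}(\cdot,\tau),
\end{equation*}
which is exactly the Laguerre expansion of $\widetilde{F*G}_\tau$ with coefficients $(F*G)_{\mathbf{m}}^{\mathbf{p}}(\tau)=\sum_{\mathbf{k}}F_{\mathbf{k}}^{\mathbf{p}}(\tau)G_{\mathbf{m}}^{\mathbf{k}}(\tau)$. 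Reading off the matrix of coefficients, this says precisely $\mathcal{M}_\tau(F*G)=\mathcal{M}_\tau(F)\cdot\mathcal{M}_\tau(G)$, the multi-index $\mathbf{k}$ playing the role of the contracted (inner) index in the tensor product. I would close by noting that the exceptional $\tau$-set is negligible: it is contained in the union of the measure-zero set where $B^\tau$ is degenerate, the set where $\widetilde{F}_\tau$ or $\widetilde{G}_\tau$ fails to be $L^2$, and the set where Proposition~\ref{prop:convolution} fails pointwise, all of which have measure zero.

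The main obstacle I anticipate is justifying the termwise manipulations rigorously rather than formally: interchanging the (double) infinite sums with the twisted-convolution integral, and making sense of $\widetilde{\mathscr L}$-expansions convergent only in $L^2(\mathbb{R}^{2n})$ when $*_\tau$ is not obviously continuous on $L^2$. The cleanest route is to observe that the $\widetilde{\mathscr L}_{\mathbf{p}\wedge\mathbf{k}-\mathbf{1}}^{(\mathbf{p}-\mathbf{k})}(\cdot,\tau)$ behave like the matrix units $E_{\mathbf{p}\mathbf{k}}$ of the (Hilbert–Schmidt completion of the) algebra of twisted-convolution operators — equivalently, after the Bargmann/Weyl correspondence, they correspond to rank-one operators $|\mathbf{p}\rangle\langle\mathbf{k}|$ on a Fock-type space — so that twisted convolution becomes genuine operator composition, which is continuous in the Hilbert–Schmidt norm; Proposition~\ref{prop:product} is exactly $E_{\mathbf{p}\mathbf{k}}E_{\mathbf{q}\mathbf{m}}=\delta_{\mathbf{k}\mathbf{q}}E_{\mathbf{p}\mathbf{m}}$. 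One should therefore first record that $\widetilde{F}_\tau,\widetilde{G}_\tau\in L^2$ correspond to Hilbert–Schmidt operators whose product is Hilbert–Schmidt (in fact trace class, being a product of two HS operators), that $F*G\in L^1$ so $\widetilde{F*G}_\tau$ makes sense, and that on the dense subspace of finite Laguerre expansions the identity is the elementary computation above; a limiting argument in the Hilbert–Schmidt norm then extends it to all of $L^2$. With that structural remark in place the remaining steps are routine.
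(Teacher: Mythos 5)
Your proposal is correct, and the main computation coincides with the paper's: partial Fourier transform plus Proposition~\ref{prop:convolution} to reduce to the twisted convolution, Laguerre expansion of $\widetilde{F}_\tau$ and $\widetilde{G}_\tau$, termwise application of Proposition~\ref{prop:product} to collapse the Kronecker delta, and reading off the coefficient matrix as $\mathcal{M}_\tau(F)\cdot\mathcal{M}_\tau(G)$. The only place you diverge is in how you justify the termwise manipulation, which you correctly identify as the real obstacle. You reach for the Bargmann/Weyl correspondence, under which the $\widetilde{\mathscr L}$'s become matrix units $|\mathbf{p}\rangle\langle\mathbf{k}|$ and $*_\tau$ becomes composition of Hilbert--Schmidt operators; that works, but it is precisely the representation-theoretic route the paper explicitly sets up the Laguerre calculus to bypass (see the discussion around the matrix elements $\langle\pi_\tau(y,s)h_{\mathbf{k}},h_{\mathbf{p}}\rangle$ in the introduction). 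The paper's justification is more elementary: since $F\in L^1(\mathcal{N})$, Fubini gives $\widetilde{F}_\tau\in L^1(\mathbb{R}^{2n})$ for a.e.\ $\tau$ (not just $L^2$, which is all you record), and then the Minkowski-type inequality $\|u*_\tau v\|_{L^2}\leq\|u\|_{L^1}\|v\|_{L^2}$ of Corollary~\ref{cor:Minkowski} makes $\widetilde{F}_\tau*_\tau(\cdot)$ a bounded operator on $L^2(\mathbb{R}^{2n})$, so the $L^2$-convergent Laguerre series of $\widetilde{G}_\tau$ can be convolved term by term (and symmetrically for the expansion of $\widetilde{F}_\tau$ against the Schwartz functions $\widetilde{\mathscr L}$). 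The paper then confirms square-summability of the product coefficients by Cauchy--Schwarz and identifies them by pairing against the orthogonal basis, using the norm computation of Lemma~\ref{lem:L2}. Both justifications are sound; the paper's buys you a self-contained argument within the stated $L^1\cap L^2$ framework, while yours buys conceptual clarity (trace-class products, operator composition) at the cost of importing the group Fourier transform machinery.
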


Proposition \ref{prop:convolution} and Theorem \ref{thm:Laguerre calculus }   essentially
give  us homomorphisms of noncommutative algebras:
    \begin{equation*}\begin{split}
      ( L^1(\mathcal{N}), *)& \xrightarrow{\rm hom. } (  L^1(\mathbb{R}^{2n}),
        * {}_\tau)
       \xrightarrow{\rm hom.}\mbox{\rm the algebra of } \infty\times\infty-\mbox{\rm matrices} .
  \end{split}  \end{equation*}

The Laguerre calculus can be viewed as a simplification of the group Fourier transformation in some sense.
For any $\tau\in  \mathbb{R}^r \setminus\{0\}$, there exists an irreducible representation $\pi_\tau$ of $\mathcal{N}$ such that for each element $(y,s)\in \mathcal{N}$, $\pi_\tau(y,s)$ is a unitary operator on $L^2(\mathbb{R}^{ n})$.    A crucial  step to apply the group Fourier transformation  effectively  is    to find   matrix elements
\begin{equation}\label{eq:matrix-element}
   \langle\pi_\tau(y,s) h_{\mathbf{k}}, h_{\mathbf{p}}\rangle,
\end{equation} where $\{h_{\mathbf{k}}\}$ is an orthonormal  basis of $L^2(\mathbb{R}^{ n})$ consisting of Hermitian functions. It can be shown that the matrix elements (\ref{eq:matrix-element}) are exactly   Laguerre distributions by using Wigner transformation formula of Hermitian functions (see e.g.
 \cite{wang5}  for $\mathbf{k}=\mathbf{p}$ and  \cite{PR, SW, ww} for $\mathbf{k}=\mathbf{p}=\mathbf{0}$). Then the multiplicativity of Laguerre  tensors  is a corollary of the following property of representations:
$
   \pi_\tau(   F *  {G} )= \pi_\tau(   F   ) \pi_\tau(    {G} )
$ for $ {F}, G \in L^1 (\mathcal{N})$, as Hilbert-Schimdt  operators on $L^2(\mathbb{R}^{n})$.
See \cite[page 21-22]{BCT} for this fact for the Heisenberg group. So it is a simplification of the group Fourier transformation that we   define Laguerre functions directly and establish their properties, without mention representations. Namely, we skip the step from irreducible representations to matrix elements.

In Section 5 we find the Laguerre  tensors of    left invariant differential operators, and apply them to obtain the fundamental solution for the sub-Laplacian in Section 6. From the definition (\ref{eq:exponential-Laguerre}) of Laguerre distributions, we see that $
 \widetilde{ \mathscr L}_{\mathbf{k} }^{(\mathbf{p} )}(y,\tau )$ becomes degenerate as $\tau$ converges to some degenerate point (i.e. $\mu_j(\tau)\rightarrow 0$ for some $j$). For simplicity, we assume  that $B^\tau $ is non-degenerate for   any $0\neq\tau\in \mathbb{R}^r$ in the application (it can be applied to the general case by analyzing the degeneracy of eigenvalues).

\begin{thm} \label{thm:fundamental} Suppose that  $B^\tau $ is non-degenerate for   any $0\neq\tau\in \mathbb{R}^r$. The fundamental solution to the sub-Laplacian on $\mathcal{N}$ is given by the integral
   \begin{equation*}\begin{split}
\frac{\Gamma(n+r-1)}{\pi^n}   \int_{\mathbb{R}^r}\det\left[\frac  { |B^\tau| }{ \sinh   |B^\tau|    } \right]^{\frac 12}\frac {d\tau} { ( \langle |B^\tau| \coth|B^\tau| y,y\rangle+it\cdot  {\tau})^{n+r-1}  },
    \end{split}
\end{equation*}for $y\neq 0$, where $|B^\tau|:= [(B^\tau)^t  B^\tau]^{\frac 12}$ is a $ 2n\times 2n $ symmetric matrix.
\end{thm}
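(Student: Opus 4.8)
The plan is to pass through the Laguerre calculus: compute the Laguerre tensor of the sub-Laplacian $\mathcal{L}$, invert it, sum the resulting Laguerre series in closed form with a generating function, and then undo the partial Fourier transform in $t$. Write $\Phi$ for the sought fundamental solution, $\mathcal{L}\Phi=\delta$. Since $\mathcal{L}$ is left invariant, $\mathcal{L}F=F*(\mathcal{L}\delta)$, so by (the distributional extension of) Theorem~\ref{thm:Laguerre calculus } one has $\mathcal{M}_\tau(\mathcal{L}F)=\mathcal{M}_\tau(F)\cdot\mathcal{M}_\tau(\mathcal{L}\delta)$. From the computation of left invariant differential operators in Section~5, $\mathcal{M}_\tau(\mathcal{L}\delta)$ is the \emph{diagonal} tensor whose $(\mathbf{k},\mathbf{k})$-entry is $\lambda_{\mathbf{k}}(\tau)=\sum_{j=1}^n(2k_j-1)\mu_j(\tau)$, $\mathbf{k}\in\mathbb{Z}_+^n$; equivalently each diagonal Laguerre distribution $\widetilde{\mathscr L}_{\mathbf{k}-\mathbf{1}}^{(\mathbf{0})}(\cdot,\tau)$ is an eigen-idempotent of $*_\tau$-multiplication by $\widetilde{\mathcal{L}\delta}_\tau$ with eigenvalue $\lambda_{\mathbf{k}}(\tau)>0$. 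Since the origin $\delta$ is the unit for $*$, $\mathcal{M}_\tau(\delta)=\mathrm{Id}$, and from $\mathcal{M}_\tau(\Phi)\cdot\mathcal{M}_\tau(\mathcal{L}\delta)=\mathcal{M}_\tau(\mathcal{L}\Phi)=\mathrm{Id}$ we read off
\[
\widetilde{\Phi}_\tau(y)=\sum_{\mathbf{k}\in\mathbb{Z}_+^n}\frac{1}{\lambda_{\mathbf{k}}(\tau)}\,\widetilde{\mathscr L}_{\mathbf{k}-\mathbf{1}}^{(\mathbf{0})}(y,\tau).
\]

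To sum the series, write $\lambda_{\mathbf{k}}(\tau)^{-1}=\int_0^\infty e^{-s\lambda_{\mathbf{k}}(\tau)}\,ds$. For fixed $s>0$ the series $\sum_{\mathbf{k}}e^{-s\lambda_{\mathbf{k}}(\tau)}\widetilde{\mathscr L}_{\mathbf{k}-\mathbf{1}}^{(\mathbf{0})}(y,\tau)$ converges absolutely (dominated by a geometric series, since $|l_k^{(0)}|\le1$), factors over $j$ by \eqref{eq:exponential-Laguerre}, and each one-variable factor is evaluated from \eqref{eq:l-L} and the generating identity $\sum_{k\ge0}L_k^{(0)}(\sigma)t^k=(1-t)^{-1}e^{-\sigma t/(1-t)}$ with $t=e^{-2s\mu_j(\tau)}$, giving $\sum_{k\ge0}e^{-s(2k+1)\mu}l_k^{(0)}(\sigma)=(2\sinh s\mu)^{-1}\exp(-\tfrac\sigma2\coth s\mu)$. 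Putting $\sigma=2\mu_j(\tau)|\mathbf{y}_j^\tau|^2$, collecting the prefactors $\mu_j(\dot\tau)\cdot 2|\tau|/\pi$ and using both $\mu_j(\tau)=|\tau|\mu_j(\dot\tau)$ and the fact that in the local basis $\{v_k^\tau\}$ the symmetric matrix $|B^\tau|$ is block diagonal with $2\times2$ blocks $\mu_j(\tau)I_2$, the product over $j$ assembles into
\[
\widetilde{\Phi}_\tau(y)=\frac1{\pi^n}\int_0^\infty\det\Bigl[\frac{|B^\tau|}{\sinh(s|B^\tau|)}\Bigr]^{1/2}\exp\bigl(-\langle|B^\tau|\coth(s|B^\tau|)\,y,y\rangle\bigr)\,ds.
\]
The integrand is the partial Fourier transform of the heat kernel, and the exchange of $\sum_{\mathbf{k}}$ with $\int_0^\infty ds$ is legitimate for $y\neq0$ because it decays as $s\to0^+$ (the factor $\coth(s\mu)\sim(s\mu)^{-1}$ makes the exponential kill the $s^{-n}$ blow-up of the determinant) and exponentially as $s\to\infty$.

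It remains to invert the partial Fourier transform \eqref{eq:partial-Fourier} in $\tau$. Interchanging the $\tau$- and $s$-integrations and substituting $\tau=\eta/s$, the linearity $B^{\eta/s}=s^{-1}B^\eta$ gives $s|B^\tau|=|B^\eta|$, a factor $s^{-n}$ from the $2n\times2n$ determinant, argument $-s^{-1}\langle|B^\eta|\coth|B^\eta|\,y,y\rangle$ in the exponential, and $it\cdot\tau=s^{-1}it\cdot\eta$; interchanging once more, the inner $s$-integral is $\int_0^\infty s^{-(n+r)}e^{-A(\eta)/s}\,ds=\Gamma(n+r-1)\,A(\eta)^{-(n+r-1)}$ with $A(\eta)=\langle|B^\eta|\coth|B^\eta|\,y,y\rangle+it\cdot\eta$, valid since $\operatorname{Re}A(\eta)>0$ for $y\neq0$. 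Tracking the normalizations in \eqref{eq:partial-Fourier} and \eqref{eq:exponential-Laguerre} yields the constant $\Gamma(n+r-1)/\pi^n$, and the resulting $\tau$-integral converges because the integrand is smooth at $\tau=0$ ($x\coth x\to1$) and decays exponentially as $|\tau|\to\infty$; this is the asserted formula.

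The main obstacle is the analytic bookkeeping rather than any individual step. Since $\Phi\notin L^1(\mathcal{N})\cap L^2(\mathcal{N})$---and the same is true of the resolvent kernel $(\mathcal{L}+\varepsilon)^{-1}\delta$, which has the same (non-$L^2_{\mathrm{loc}}$) homogeneous singularity at the origin---Theorem~\ref{thm:Laguerre calculus }, the identity $\mathcal{M}_\tau(\delta)=\mathrm{Id}$ (which itself rests on the only Abel-summable expansion $\sum_{\mathbf{k}}\widetilde{\mathscr L}_{\mathbf{k}-\mathbf{1}}^{(\mathbf{0})}=\delta_0$, guaranteed by Proposition~\ref{prop:product} together with completeness of the Laguerre basis in $L^2(\mathbb{R}^{2n})$), the Laguerre expansion of $\widetilde{\Phi}_\tau$, and the two Fubini interchanges must all be justified in the space of tempered distributions by testing against Schwartz functions. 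A clean way around this is to regard the computation above only as a \emph{derivation of the candidate} and then verify it directly: differentiating under the integral, on the heat kernel one has $\mathcal{L}H_s=-\partial_s H_s$, so $\int_0^\infty\mathcal{L}H_s\,ds=H_0-H_\infty$ is the partial Fourier transform of $\delta$ (as $H_s\to\delta_0$ when $s\to0^+$ and $H_s\to0$ when $s\to\infty$), and inverting the transform gives $\mathcal{L}\Phi=\delta$.
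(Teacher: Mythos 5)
Your proposal is correct and its computational core is the same as the paper's: diagonalize $\widetilde{\triangle_b}$ on the Laguerre basis with eigenvalues $\sum_j \mu_j(\tau)(2k_j+1)$, invert via the subordination $A^{-1}=\int_0^\infty e^{-As}\,ds$, sum the series with the generating function (\ref{eq:generating-function'}) to reach the heat-kernel integrand $\pi^{-n}\det\bigl[|B^\tau|/\sinh(s|B^\tau|)\bigr]^{1/2}\exp\bigl(-\langle|B^\tau|\coth(s|B^\tau|)y,y\rangle\bigr)$, and finish with a homogeneity substitution and the Gamma integral $\int_0^\infty \rho^{n+r-2}e^{-A\rho}\,d\rho=\Gamma(n+r-1)A^{-(n+r-1)}$ (the paper's polar-coordinate change $\lambda=\dot\tau s$ followed by integrating out $|\tau|$ is the same move as your $\tau=\eta/s$). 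The one genuine divergence is how $\triangle_b\Phi=\delta$ is closed. The paper inserts an Abel parameter from the outset, defining $\Psi_R=\sum_{\mathbf{k}}\mathscr{F}_{\mathbf{k}}R^{|\mathbf{k}|}$ as in (\ref{eq:kernel}), so that $\triangle_b\Psi_R=\sum_{\mathbf{k}}\mathscr{L}_{\mathbf{k}}^{(\mathbf{0})}R^{|\mathbf{k}|}\to\delta$ follows from the approximate-identity statement of Theorem \ref{thm:app}, every interchange of sum and integral is trivial because of the factor $(e^{-2\mu_j s}R)^{k_j}$, and the limit $R\to 1-$ is passed through $\triangle_b$ by continuity on $\mathcal{S}'$. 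Your route---sum directly for $y\neq0$ and verify $\triangle_b\Phi=\delta$ a posteriori via $\triangle_bH_s=-\partial_sH_s$ and $H_0-H_\infty=\delta$---is legitimate and conceptually clean, but it relocates rather than removes the analytic work: the assertions that the closed-form integrand is the heat kernel and that the $s\to0^+$ boundary term is $\delta$ are precisely what the paper's $R$-regularization together with Theorem \ref{thm:app} supplies a proof of. Your diagnosis of where the care is needed (the Laguerre expansion and multiplicativity are proved only in $L^1\cap L^2$, and $\Phi$ is not there) is exactly the paper's reason for working with $\Psi_R$.
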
 We also use the Laguerre calculus to find the Szeg\"o kernel for $k$-CF  functions on the quaternionic Heisenberg group, which was established in \cite{SW} by using the group Fourier transformation. The proof given here by applying the Laguerre calculus is much more easy and clear.

\section{ The  nilpotent
Lie groups of step two and $\tau$-coordinates}
\subsection{ The  nilpotent
Lie groups of step two}Let $\mathcal{N}$ be a nilpotent Lie group of step two with Lie algebra $\mathfrak n$. A nilpotent Lie  algebra $\mathfrak n$ is {\it of step two}
means that $[\mathfrak n,\mathfrak n]$ is central, i.e. $[\mathfrak n,[\mathfrak n,\mathfrak n]]=\{0\}$. Let $\{T_1,\ldots T_r\}$ be a basis of
$[\mathfrak n,\mathfrak n]$. It can be extended to a (Malcev) basis of $\mathfrak n$, $\{T_1,\ldots T_r, Y_1, \ldots, Y_m\}$, with $\dim\mathfrak n=m+r$. Then there exists real numbers $B_{kl}^\beta$'s such that
\begin{equation*}\label{eq:Bjk-s}
    [Y_k,Y_l]=4\sum_{\beta=1}^r B_{kl}^\beta T_\beta, \qquad [T_\beta,Y_k]=[T_\beta,T_\gamma]=0  .
\end{equation*}
 Recall that for a  connected and  simply connected  nilpotent Lie group, the exponential mapping $\exp$ is an analytic diffeomorphism and the Baker-Campell-Hausdorff formula holds \cite{CG}. For nilpotent Lie group $\mathcal{N}$ of step two, this formula becomes
 \begin{equation}\label{eq:CBH}
    \exp X\cdot\exp Y=\exp\left (X+Y+\frac 12 [X,Y]\right)
 \end{equation}for any $X,Y\in \mathfrak n$.
If we identify the group $\mathcal{N}$ with $\mathbb{R}^{m}\times\mathbb{R}^r$ by identifying the element $\exp (\sum_{k =1}^m y_kY_k+ \sum_{\beta=1}^r t_\beta T_\beta)$ with
the point $(y_1,\ldots y_m,t_1,\ldots, t_r)\in \mathbb{R}^{m}\times\mathbb{R}^r$, the Baker-Campell-Hausdorff formula (\ref{eq:CBH}) implies that the multiplication of the group $\mathcal{N}$ can  be written as (\ref{eq:multipler})-(\ref{eq:Phi-B}).  Conversely, for any given  skew-symmetric mapping $B $, the vector space $\mathbb{R}^{{m}}\times\mathbb{R}^r$ with the multiplication given by (\ref{eq:multipler})-(\ref{eq:Phi-B})  is a {\it nilpotent Lie group $\mathcal{N}$ of step two}. The identity element is $(0,0)$. The skew-symmetry of $B$ implies that the inverse of $(x,t)$ is $(-x,-t)$, and the associativity follows from the bilinearity of $B$.

For any $\tau\in  \mathbb{R}^r \setminus\{0\}$, denote matrix
\begin{equation*}
 B^\tau:= \left (\sum_{\beta=1}^r\tau_\beta B^\beta_{l k}\right)
\end{equation*}which is a  skew-symmetric $m\times m$ matrix related to the skew-symmetric mapping in (\ref{eq:Phi-B}). Since $ iB^\tau$ is Hermitian,   eigenvalues of $  B^\tau$ must be pure imaginary. So when $m$ is odd,   $  B^\tau$ has at least one vanishing eigenvalue. For simplicity, we do not consider this degenerate case and assume $m=2n$ in this paper.
Vector fields
\begin{equation}\label{eq:Y}
    Y_k:=\partial_{y_k}+ 2  \sum_{\beta=1}^{r}\sum_{k=1}^{2n}  B^\beta_{ l   k } y_{  l  }
\partial_{ t_\beta} ,
\end{equation}    are left invariant vector fields on $\mathcal{N} $ related to the multiplication in (\ref{eq:multipler}).

Let $\partial_v$ for $v\in \mathbb{R}^{{2n}}$ be the
 derivative    on $\mathbb{R}^{{2n}}$ along the direction $v$, i.e.
$
 \partial_v =\sum_{k=1}^{{2n}}   v_k\partial_{y_k}
$.   Then,
\begin{equation*}Y_v:=\sum_{k=1}^{2n} v_kY_k=\partial_v+  2   B(y,v)\cdot
\partial_t,
\end{equation*}
is a left invariant vector field on $\mathcal{N} $, where
$
   B(y,v )\cdot
\partial_t:= B^1 (y,v )
\partial_{t_1}+\cdots
 +B^r(y,v )
\partial_{t_r}.
$
Their brackets are
  \begin{equation*} \label{eq:bracket-Phi}\begin{split}
 [Y_v,Y_{v'}]=  4 B(v,v')\cdot
\partial_t.
\end{split}
\end{equation*}

\subsection{Eigenvalues of $  B^{\tau}$ }
  Consider the characteristic polynomial of the matrix $B^\tau$
\begin{equation}\label{eq:Q}
Q(\lambda):=\det (B^{\tau}-\lambda I_{2n})=\sum_{p=1}^{2n}s_p(\tau)\lambda^p.
\end{equation}
The coefficients $s_p(\tau)$ are elements of the polynomial ring $\mathbb R[\tau_1,\ldots,\tau_r]$, that is the ring of polynomials in indeterminate variables $\tau_1,\ldots,\tau_r$ over $\mathbb R$. Since $\mathbb R$ is a field, the polynomial ring $\mathbb R[\tau_1,\ldots,\tau_r]$ is the integral domain and therefore can be extended to the field
\begin{equation*}
   k=\mathbb R(\tau_1,\ldots,\tau_r)
\end{equation*}
  of quotients of $\mathbb R[\tau_1,\ldots,\tau_r]$. In other words, any element in the field $k$ can be represented as a rational function $\frac{f(\tau_1,\ldots,\tau_r)}{h(\tau_1,\ldots,\tau_r)}$,   where polynomials $f,h\neq 0$ belong to $\mathbb R[\tau_1,\ldots,\tau_r]$ (see for instance~\cite[Page 201]{Fral}). Thus the polynomial~\eqref{eq:Q} can be considered as an element of the polynomial ring $k[\lambda]$ over the field $k$. Since every nonconstant polynomial $Q\in k[\lambda]$ can be written as a product of polynomials which are irreducible over the field $k$ (see~\cite[Proposition 2, page 151]{Cox}), we can decompose the polynomial $Q(\lambda)\in k[\lambda]$ into the product
\begin{equation}\label{eq:Q-decomposition}
Q(\lambda)=Q_1^{\alpha_1}(\lambda)\cdot\ldots\cdot Q_p^{\alpha_p}(\lambda)
\end{equation}
of irreducible polynomials over $k$.

We need one more definition, see~\cite[Page 155]{Cox}. Given polynomials $f$, $g\in k[\lambda]$ of positive degrees, we write them in the form
$$ f=a_l\lambda^l+\ldots+a_0,\quad  a_l\neq0,\qquad
g=b_m\lambda^m+\ldots+b_0,\quad b_m\neq0.
$$
The Sylvester matrix of $f$ and $g$ with respect to $\lambda$, denoted by ${\rm Syl}( f, g, \lambda)$ is the coefficient  $  (l + m) \times (l + m) $-matrix:
$$
{\rm Syl}( f, g, \lambda)=\begin{pmatrix}
a_l&   0&0&\ldots&0& b_m&0 &\ldots&0
\\
a_{l-1} &a_l&0& \ldots&0&b_{m-1} &b_m &\ldots&0
\\
a_{l-2}&a_{l-1}&a_l& \ldots&0&b_{m-2}&b_{m-1}&\ddots&0
\\
\vdots&\vdots&\vdots& \ddots&\vdots &\vdots&\vdots&\ddots&\vdots
\\
&&& &a_{l }& && &
\\
\vdots&\vdots&\vdots&\vdots&a_{l-1}&\vdots &\vdots&\vdots&
\\
a_0&&&&\vdots& &&& \vdots
\\
0&a_0&&&&b_0&b_1 &&
\\
\vdots&0&a_0&&&0&b_0&\ddots&\vdots
\\
&\vdots&&\ddots&a_1&\vdots&&\ddots&b_1
\\
0&0&\ldots&0&a_0&0&0&& b_0
\end{pmatrix},
$$
where the empty spaces are filled by zeros and the   coefficients $a_j$ occupies the first  $m$  columns and the coefficients $b_j$ occupies the last   $l$  columns.   The resultant of $f$ and $g$ with respect to $\lambda$, denoted ${\rm Res}( f, g, \lambda)$, is the determinant of the Sylvester matrix:
${\rm Res}(f, g, \lambda) = \det({\rm Syl}( f, g, \lambda))$.

\begin{prop} \cite[Proposition 8, pp. 156]{Cox}\label{prop:common_factor}
Given $f, g\in k[\lambda]$ of positive degree, the resultant ${\rm Res}( f, g, \lambda)\in k$ is an integer polynomial in the coefficients of $f$ and $g$. Furthermore, $f$ and $g$ have a common factor in $k[\lambda]$ if and only if ${\rm Res}( f, g, \lambda) = 0$.
\end{prop}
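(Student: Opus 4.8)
The plan is to treat the two assertions of Proposition~\ref{prop:common_factor} in turn; both are classical, and I would follow the standard route (as in~\cite{Cox}), adapted to keep fixed the column convention stated above.

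For the first assertion I would simply expand $\det({\rm Syl}(f,g,\lambda))$ by the Leibniz permutation formula. Every entry of the Sylvester matrix is either $0$ or one of $a_0,\dots,a_l,b_0,\dots,b_m$, so each term of the expansion is $\pm$ a product of such entries; hence the determinant is a $\mathbb Z$-linear combination of monomials in $a_0,\dots,a_l$ and $b_0,\dots,b_m$, i.e.\ an integer polynomial in the coefficients of $f$ and $g$. (The same count shows it is bihomogeneous of degree $m$ in the $a_i$'s and $l$ in the $b_j$'s, which I would record in passing although it is not needed here.)

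For the vanishing criterion the heart of the matter is the linear-algebra reformulation. First I would prove the lemma: \emph{$f$ and $g$ have a common factor in $k[\lambda]$ if and only if there exist $A,B\in k[\lambda]$, both nonzero, with $\deg A\le m-1$, $\deg B\le l-1$, and $Af+Bg=0$}. The forward implication is immediate — if $f=h f_1$ and $g=h g_1$ with $\deg h\ge 1$, then $A=g_1$, $B=-f_1$ works, with degrees at most $m-1$ and $l-1$ respectively. For the converse I would use that $k$ is a field, so $k[\lambda]$ is a PID and hence a UFD: if $f$ and $g$ had no common factor they would be coprime, and from $f\mid Bg$ we would get $f\mid B$, impossible for a nonzero $B$ with $\deg B\le l-1<l=\deg f$. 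Next I would translate "there exist such $A,B$" into a determinant condition: writing $A=c_{m-1}\lambda^{m-1}+\dots+c_0$ and $B=d_{l-1}\lambda^{l-1}+\dots+d_0$ with undetermined coefficients, the polynomial identity $Af+Bg=0$ amounts to equating to zero the coefficient of each power $\lambda^0,\dots,\lambda^{l+m-1}$, a homogeneous linear system of $l+m$ equations in the $l+m$ unknowns $c_0,\dots,c_{m-1},d_0,\dots,d_{l-1}$ whose coefficient matrix is exactly ${\rm Syl}(f,g,\lambda)$ with the column convention above (up to transposition, which leaves the determinant unchanged). Such a system has a nonzero solution over $k$ precisely when the determinant of its matrix vanishes, i.e.\ precisely when ${\rm Res}(f,g,\lambda)=0$; combined with the lemma, this gives the statement.

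The only place that requires genuine care — and the one I would regard as the main obstacle — is the converse direction of the lemma, where unique factorization in $k[\lambda]$ is invoked and one must check that the strict degree bound $\deg B<\deg f$ really forces $f\mid B$ and hence a contradiction. Everything else is bookkeeping with the Sylvester matrix, the one subtlety there being to respect the convention that the $a_j$-columns precede the $b_j$-columns, so that the linear system above is indexed by ${\rm Syl}(f,g,\lambda)^{t}$ and the determinant comes out correctly.
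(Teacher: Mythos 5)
The paper does not prove this proposition at all: it is quoted as a black box from Cox--Little--O'Shea \cite[Proposition 8, p.~156]{Cox}, so there is no in-paper argument to compare against. Your proof is the standard one from that very reference and is correct — Leibniz expansion of $\det({\rm Syl}(f,g,\lambda))$ for integrality, and the reduction of the common-factor condition to the solvability of the homogeneous linear system whose matrix is (the transpose of) the Sylvester matrix. The one point you pass over silently is that a nonzero solution $(A,B)$ of that linear system automatically has \emph{both} components nonzero, which is what your lemma requires: if, say, $A=0$, then $Bg=0$ forces $B=0$ because $g\neq 0$ (it has positive degree), and symmetrically for $B=0$. This is a one-line observation rather than a gap, but it should be recorded to close the loop between ``nontrivial solution of the system'' and the hypotheses of your lemma.
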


We write $S^{r-1}$ for the unit sphere in $\mathbb R^r$ and the topology induced from $\mathbb R^r$.  Since for fixed $\tau$, $  B^{\tau}$ is skew-symmetric,  all its eigenvalues  are pure imaginary.

\begin{prop}\label{prop:normal-form} There exists a subset $E$ of $ S^{r-1}$, whose Hausdorff dimension   is at most $   r-2 $,  such that
$  B^{\tau}$ has pure imaginary eigenvalues $i\lambda_1(\tau),  \ldots,  i\lambda_q(\tau)$ of constant multiplicity over $S^{r-1}\setminus E $, that can be ordered as: $\lambda_1(\tau)>\ldots>\lambda_q(\tau)$.
\end{prop}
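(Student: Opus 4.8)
The plan is to study how the eigenvalues of $B^\tau$ vary with $\tau\in S^{r-1}$ by controlling the ``bad'' set where either the multiplicity structure of the eigenvalues jumps or two distinct eigenvalue branches collide. Since $iB^\tau$ is Hermitian, $B^\tau$ has purely imaginary spectrum $i\lambda_1,\dots,i\lambda_q$ (with multiplicities), and by skew-symmetry these come in $\pm$ pairs; so it suffices to understand the distinct positive values and their multiplicities. The coefficients $s_p(\tau)$ of the characteristic polynomial $Q(\lambda)=\det(B^\tau-\lambda I_{2n})$ are polynomials in $\tau$, so $Q$ is a polynomial in $k[\lambda]$ over the field $k=\mathbb{R}(\tau_1,\dots,\tau_r)$. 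Factor $Q=Q_1^{\alpha_1}\cdots Q_p^{\alpha_p}$ into irreducibles over $k$ as in \eqref{eq:Q-decomposition}.

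First I would argue that the number of distinct irreducible factors and their exponents $\alpha_i$ give, generically, the multiplicity pattern of the eigenvalues. For each $i$, the factor $Q_i$ is irreducible over $k$, hence separable (characteristic zero), so ${\rm Res}(Q_i,Q_i',\lambda)\in k$ is a nonzero rational function of $\tau$; likewise for $i\neq j$, $Q_i$ and $Q_j$ are coprime in $k[\lambda]$, so ${\rm Res}(Q_i,Q_j,\lambda)\in k$ is nonzero by Proposition \ref{prop:common_factor}. Writing each of these finitely many nonzero rational functions of $\tau$ as (numerator)/(denominator) with polynomial numerators and denominators, let $P(\tau)$ be the product of all these numerators together with all the denominators appearing anywhere (including the leading coefficients needed to make sense of degrees, and the denominators clearing the $s_p(\tau)$ to polynomial form). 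Then $P$ is a nonzero polynomial on $\mathbb{R}^r$, and I would set $E=\{\tau\in S^{r-1}: P(\tau)=0\}$. The real zero set of a nonzero polynomial in $\mathbb{R}^r$ is a proper real-algebraic subset, hence has Hausdorff dimension at most $r-1$; intersecting with $S^{r-1}$ and using that $S^{r-1}$ is not contained in $\{P=0\}$ (shrink $P$ if necessary, or note $P\not\equiv 0$ forces the intersection to be a proper subvariety of the sphere), one gets $\dim_H E\le r-2$. Off $E$, all denominators are nonzero and all the resultants are nonzero, so over $S^{r-1}\setminus E$ the polynomial $Q(\lambda)$ (with coefficients now honest smooth — indeed real-analytic — functions of $\tau$) has exactly $p$ distinct roots, the $i$-th with constant multiplicity $\alpha_i$.

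Next I would upgrade ``$p$ distinct roots of constant multiplicity'' to a \emph{continuous ordering} $\lambda_1(\tau)>\dots>\lambda_q(\tau)$ of the distinct positive eigenvalues on $S^{r-1}\setminus E$. Since the roots never collide there and depend continuously on the coefficients (which depend continuously on $\tau$), on each connected component of $S^{r-1}\setminus E$ the roots can be labelled by continuous — in fact, by the implicit function theorem applied to $Q_i(\lambda,\tau)=0$ at a simple root of $Q_i$, real-analytic — functions, and the strict ordering $>$ is preserved on the component. A small extra point: one must pass from the eigenvalues of $B^\tau$ (which are $\pm i\lambda$ pairs, $\lambda\ge 0$, possibly including $0$ if we had not assumed $m=2n$ non-degenerate generically) to the positive ones; the non-degeneracy assumption $m=2n$ together with enlarging $E$ by the zero set of $\det B^\tau$ (a polynomial in $\tau$, not identically zero by the standing assumption that $B^\tau$ is non-degenerate for almost all $\tau$) handles this, so that $\lambda_q(\tau)>0$. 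I would then note that the global ordering statement is really a statement valid on $S^{r-1}\setminus E$ after possibly enlarging $E$ by those (lower-dimensional) loci across which two components would disagree — but in fact the problem only asserts constant multiplicity and an ordering, both of which are component-wise legitimate, and the $\mu_j(\tau)$ used later in \eqref{eq:mu-j}–\eqref{eq:mu-dot} are built locally anyway.

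The main obstacle I expect is the passage from ``irreducible-factor data over the field $k=\mathbb{R}(\tau_1,\dots,\tau_r)$'' to ``pointwise spectral data for each fixed real $\tau$.'' Irreducibility over the function field only controls the \emph{generic} point; to conclude something at every $\tau$ outside a small set one must identify exactly which polynomial inequalities ``$\ne 0$'' must be imposed — the resultants ${\rm Res}(Q_i,Q_i',\lambda)$, ${\rm Res}(Q_i,Q_j,\lambda)$, \emph{and} all denominators introduced when the $Q_i$'s and these resultants are written with polynomial (rather than rational) coefficients in $\tau$ — and then check that the specialization of $Q$ at a real $\tau$ satisfying all these inequalities genuinely factors with the same pattern of distinct roots and multiplicities. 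This is a standard but slightly delicate specialization argument; the Hausdorff-dimension bound itself is then routine, coming from the elementary fact that the zero set of a single nonzero polynomial on $\mathbb{R}^r$ has dimension $\le r-1$ and, intersected with the sphere, $\le r-2$.
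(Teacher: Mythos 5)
Your strategy is essentially the paper's: factor $Q(\lambda)=\det(B^\tau-\lambda I_{2n})$ into irreducibles over $k=\mathbb{R}(\tau_1,\ldots,\tau_r)$, use resultants to locate the $\tau$ where the root pattern degenerates, observe that this bad locus is cut out by finitely many nonzero polynomials (numerators of the resultants together with the denominators needed to write everything with polynomial coefficients), and bound its dimension. In fact you are \emph{more} careful than the paper on one point: you also impose ${\rm Res}(Q_i,Q_j,\lambda)\neq 0$ for $i\neq j$, which is genuinely needed to prevent roots of distinct irreducible factors from colliding at special real $\tau$ and thereby changing the multiplicities of $Q$; the paper only treats ${\rm Res}(Q_l,Q_l',\lambda)$ for each single factor.

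There is, however, one step in your dimension count that does not work as written. From $P\not\equiv 0$ on $\mathbb{R}^r$ you conclude that $\{P=0\}\cap S^{r-1}$ is a proper algebraic subset of the sphere, hence of Hausdorff dimension at most $r-2$. But a nonzero polynomial can vanish identically on $S^{r-1}$ (any multiple of $|\tau|^2-1$ does), so ``$P\not\equiv 0$ forces the intersection to be a proper subvariety of the sphere'' is false in general, and ``shrink $P$ if necessary'' does not obviously preserve the inclusion $E\subseteq\{P=0\}$ on the sphere. The missing ingredient is the homogeneity that the paper invokes explicitly: since $s_p(\tau)$ is homogeneous of degree $2n-p$, one has $Q(s\lambda;s\tau)=s^{2n}Q(\lambda;\tau)$, so the multiplicity pattern of the roots is invariant under $\tau\mapsto s\tau$, $s\neq 0$. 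Hence the intrinsic bad set is a \emph{conic} algebraic set; being proper in $\mathbb{R}^r$, it cannot contain $S^{r-1}$ (else it would contain all of $\mathbb{R}^r\setminus\{0\}$), and its intersection with the sphere is then a proper algebraic subset of dimension at most $r-2$. With that observation inserted, your argument goes through; the rest (continuity and real-analyticity of the simple roots of each $\frac{1}{i^{k_l}}Q_l(i\lambda)$ via the implicit function theorem, componentwise ordering, and enlarging $E$ by the zero set of $\det B^\tau$ to ensure $\lambda_q>0$ is handled correctly) matches the paper's subsequent use of the result.
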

\begin{proof}
Decompose the polynomial $Q(\lambda)$ into the irreducible ones as in~\eqref{eq:Q-decomposition} and consider one irreducible polynomial $Q_l(\lambda)$.
The common factors of polynomials $Q_l(\lambda)$  and its derivative $Q_l'(\lambda)=\frac{dQ_l(\lambda)}{d\lambda}$ can be detected by the zeros of the resultant ${\rm Res}(Q_l,Q_l'  ,\lambda)$ by Proposition \ref{prop:common_factor}. By definition the resultant ${\rm Res}(Q_l,Q_l' ,\lambda)$, being the determinant of the Sylvester matrix, is a polynomial in coefficients of $Q_l(\lambda)$ and $Q_l'(\lambda) $, thus it an element of $k$.

We need to be careful about the sets in $\mathbb R^r$, where the coefficients of the polynomials $Q_l(\lambda)$ and $ Q_l'(\lambda) $ are not defined. If we write
 \begin{equation*}
    Q_l(\lambda)=\sum_{p=0}^{m_l}(s_l)_p(\tau)\lambda^p,\quad\text{with}\quad (s_l)_p =\frac{(f_l)_p}{(h_l)_p}\in k,
 \end{equation*} for some polynomials $(f_l)_p, (h_l)_p$,
then
\begin{equation*}
   Q_l'(\lambda)= \sum_{p=1}^{m_l}(  s_l)_p(\tau)p\lambda^{p-1} .
\end{equation*}
Recall that a subset $V$ of $\mathbb R^r$ is called {\it real semi-algebraic} if it admits some representation of the form
\begin{equation*}
   V=\bigcup_{i=1}^a\bigcap_{j=1}^{b_i}\left\{x\in\mathbb R^r; P_{i,j}(x)\Box_{ij} 0\right\}
\end{equation*}for some real polynomials $P_{i,j}$, where $\Box_{ij}$ is one of the symbols $\{<,=,>\}$. $V$ is called a {\it real  algebraic set} if each $\Box_{ij} $ is   $=$.
  Then
 \begin{equation*}
 E_l  : =\bigcup_{p}^{m_l}\left\{\tau\in\mathbb R^r:\ (h_l)_p(\tau)=0\right\}
 \end{equation*}
  is a real  algebraic set, and the semi-algebraic set
 \begin{equation*}Z_l:=\left \{\tau\in\mathbb R^r\setminus E_l:\ {\rm Res}\left(Q_l,Q_l',\lambda\right)=0   \right\}
 \end{equation*}
 contains the points in $\mathbb R^r$  where the polynomial $Q_l(\lambda)$ has roots of multiplicity greater or equal to $2$.
There are three options:
 \begin{equation*}
   (1)\, Z_l=\emptyset;\qquad (2)\, \emptyset\neq Z_l\subsetneq \mathbb R^r\setminus E_l;\qquad (3)\, Z_l=\mathbb R^r\setminus E_l.
 \end{equation*}
(1) If $Z_l=\emptyset$, or in other words ${\rm Res}(Q_l,  Q'_l,\lambda)$ is  non-zero on $\mathbb R^r\setminus E_l$, then all the roots of $Q_l(\lambda)$ has constant multiplicity one for any value of $\tau\in \mathbb R^r\setminus E_l$.
(2) If the set $Z_l$ is a non-empty proper subset of $\mathbb R^r\setminus E_l$, then it contains the points $\tau$, where the multiplicity of roots of $Q_l(\lambda)$ is at least 2.   Thus the set $\mathbb R^r\setminus (Z_l\cup E_l)$ is an open set containing points $\tau$, where the multiplicity of any root is equal to one. (3)
The case $Z_l=\mathbb R^r\setminus E_l$ occurs only if ${\rm Res}(Q_l,  Q_l',\lambda)$ is identically zero, but it means that $ Q_l$ and $ Q_l'$ have a common factor, which contradicts to the assumption that $Q_l$ is irreducible. We conclude that for any $\tau  \in \mathbb R^r\setminus E$, where   $E:=Z_l\cup E_l$ is a  real  algebraic set,   the irreducible polynomial $Q_l(\lambda)$ has roots of multiplicity one.
These roots have multiplicity $\alpha_l$ for the polynomial $Q(\lambda)$ due to the decomposition~\eqref{eq:Q-decomposition}.

Repeating the arguments for each of the irreducible polynomials in ~\eqref{eq:Q-decomposition}, we deduce that all of the irreducible polynomials will have simple roots on the set
 \begin{equation}\label{eq:set-E}
    \mathbb R^r\setminus E,\quad\text{with}\quad E:=\bigcup_{l=1}^{ q}(Z_l\cup E_l).
  \end{equation}
Thus the multiplicities of the roots of $Q(\lambda)$ will be locally constant. Recall that a real algebraic  set carries a finite semi-algebraic partition by analytic submanifolds
 of $\mathbb{R}^r$ (cf.   \cite[page 135]{BR}), and so it is of Hausdorff dimension at most $  r-1$.

  The equation (\ref{eq:Q}) is homogeneous in the sense that if
 $(\lambda,\tau)$ is a solution, then  $(s\lambda,s\tau)$
  for  $0\neq s\in \mathbb{R}$ is also a solution of (\ref{eq:Q}) by the trivial property
 of determinants.
So if some eigenvalue of $  B^{\tau}$ is not of constant multiplicity in some neighborhood of $\tau_0$, neither is $ s\tau_0 $
  for any $0\neq s\in \mathbb{R}$.  Namely, $E$ in (\ref{eq:set-E}) is a conic algebraic set. So the intersection $E\cap S^{r-1}$ is an
   algebraic subset of $  S^{r-1}$ of Hausdorff dimension at most $  r-2$.
\end{proof}
\subsection{Normalization of   $B^\tau$ and the $\tau$-coordinates}
Now we can find a smooth orthonormal frame to normalize  $B^\tau$ locally as
Katsumi \cite{Katsumi} did for   symmetric matrices.
\begin{prop} \label{prop:orthonormal-basis} Let $E$ be a subset of $ S^{r-1}$ of Hausdorff dimension     at most $   r-2 $ as in Proposition \ref{prop:normal-form}. Then for any   $\tau_0\in  S^{r-1}\setminus E$, we can find a neighborhood $U$ of $\tau_0\in  S^{r-1}$ and an
   orthonormal basis
$\{v^\tau_1,$ $\ldots, v^\tau_{{2n}}\}$ of $\mathbb{R}^{{2n}}$ smoothly depending on $\tau \in U$,  such that the matrix
$O(\tau)= (v^\tau_1,\ldots, v^\tau_{{2n}})$ normalizes $B^\tau$, i.e.
\begin{equation}\label{eq:J}
O(\tau)^t  B^\tau O(\tau)=J(\tau) :=\left(
  \begin{array}{ccccc}0& -\mu_1(\tau) &0&0&\cdots\\\mu_1(\tau)&0&0&0 &\cdots \\
 0&0&0& - \mu_2(\tau) &\cdots\\0&0&\mu_2(\tau)&0&\cdots\\\vdots&\vdots&\vdots&\vdots& \ddots
   \end{array}
\right),
\end{equation} where $\mu_1(\tau)\geq \mu_2(\tau)\geq\cdots\geq \mu_n(\tau)\geq 0$   also smoothly depend  on $\tau$ in this neighborhood, and $i\mu_1(\tau),-i\mu_1(\tau),\ldots,i\mu_n(\tau),-i\mu_{ n}(\tau)$ represent  repeated pure imaginary eigenvalues of $B^\tau$.
\end{prop}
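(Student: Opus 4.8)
The plan is to reduce the skew-symmetric normalization to the symmetric case treated by Katsumi~\cite{Katsumi}, the bridge being the real symmetric positive semidefinite matrix $A^\tau:=(B^\tau)^t B^\tau=-(B^\tau)^2$, which depends polynomially on $\tau$. If $i\lambda$ is a complex eigenvalue of $B^\tau$ with eigenvector $w$, then $A^\tau w=\lambda^2 w$; since $B^\tau$ is real, the eigenvalues $i\lambda_1(\tau),\dots,i\lambda_q(\tau)$ of Proposition~\ref{prop:normal-form} come in conjugate pairs $\pm i\lambda$ of equal multiplicity, together with a zero eigenvalue whose multiplicity is even (being $2n-\operatorname{rank}B^\tau$). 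Hence the distinct eigenvalues of $A^\tau$ are the numbers $\mu^2$ with $\mu>0$ running over the distinct positive $|\lambda_j(\tau)|$, together possibly with $0$, and by Proposition~\ref{prop:normal-form} each of them has constant multiplicity on $S^{r-1}\setminus E$ (namely twice the multiplicity of $i\lambda$, respectively $\dim\ker B^\tau$). Following the proof of Proposition~\ref{prop:normal-form}, the implicit function theorem applied to the irreducible factors $Q_l(\lambda,\tau)$ on $S^{r-1}\setminus E$, where their coefficients are smooth and $\partial_\lambda Q_l\neq0$ at the simple roots, shows that each $\lambda_j(\tau)$, hence each eigenvalue $\mu_j(\tau)^2$ of $A^\tau$ and each $\mu_j(\tau)=\sqrt{\mu_j(\tau)^2}$, is smooth near $\tau_0$; on a small enough neighbourhood $U$ of $\tau_0$ in $S^{r-1}$ the multiplicities are constant, the $\mu_j$ do not cross, and the ordering $\mu_1\geq\dots\geq\mu_n\geq0$ is unambiguous.

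Next I would build the smooth spectral projections. For $\tau\in U$ let $P_j(\tau)$ be the orthogonal projection of $\mathbb{R}^{2n}$ onto the eigenspace $\ker(A^\tau-\mu_j(\tau)^2 I)$ corresponding to the $j$-th distinct eigenvalue. Because the eigenvalues $\mu_j(\tau)^2$ remain distinct and smooth on $U$, the projection $P_j(\tau)$ is a smooth function of $\tau$: one may write it as the Lagrange interpolation expression $P_j(\tau)=\prod_{k\neq j}(A^\tau-\mu_k(\tau)^2 I)/(\mu_j(\tau)^2-\mu_k(\tau)^2)$, manifestly smooth, or equivalently as the Riesz integral $\frac{1}{2\pi i}\oint_{\gamma_j}(zI-A^\tau)^{-1}\,dz$ over a fixed circle $\gamma_j$ enclosing $\mu_j(\tau_0)^2$ and no other eigenvalue, valid for all $\tau$ after shrinking $U$. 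This is precisely where the constant-multiplicity conclusion of Proposition~\ref{prop:normal-form}, and hence the algebraic work behind it, is essential.

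Then I would choose, smoothly, an orthonormal frame of each eigenspace adapted to $B^\tau$. Fixing at $\tau_0$ enough real vectors so that their $P_j(\tau)$-images stay linearly independent on $U$ and running Gram--Schmidt on these images gives a smooth orthonormal frame of $V_j(\tau):=\operatorname{Im}P_j(\tau)$. If $\mu_j(\tau)>0$ on $U$, the restriction $B^\tau|_{V_j}$ satisfies $(B^\tau|_{V_j})^2=-\mu_j(\tau)^2 I$, so $J_j(\tau):=\mu_j(\tau)^{-1}B^\tau|_{V_j}$ is a smooth orthogonal complex structure on $V_j(\tau)$; a complex Gram--Schmidt procedure — repeatedly take a frame vector, subtract its orthogonal projection onto the $J_j$-invariant span of the vectors already selected, normalize, and adjoin its $J_j$-image — yields a smooth orthonormal basis $f^\tau_{j,1},J_j f^\tau_{j,1},\dots,f^\tau_{j,n_j},J_j f^\tau_{j,n_j}$ of $V_j(\tau)$ in which $B^\tau=\mu_j(\tau)J_j(\tau)$ takes the block form $\operatorname{diag}\bigl(\begin{smallmatrix}0&-\mu_j(\tau)\\ \mu_j(\tau)&0\end{smallmatrix}\bigr)$. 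If instead $\mu_j(\tau)\equiv0$ on $U$ then $B^\tau|_{V_j}=0$ and any smooth orthonormal frame of $V_j(\tau)$ works. Concatenating these frames in order of decreasing $\mu_j$ gives the columns of an orthogonal matrix $O(\tau)=(v^\tau_1,\dots,v^\tau_{2n})$ that depends smoothly on $\tau\in U$ and satisfies $O(\tau)^t B^\tau O(\tau)=J(\tau)$, proving the proposition.

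The main obstacle is keeping all of this simultaneously smooth. The smoothness of $P_j(\tau)$ relies on the absence of eigenvalue collisions on $S^{r-1}\setminus E$ supplied by Proposition~\ref{prop:normal-form}; and the real and complex Gram--Schmidt steps, which involve divisions by norms and successive projections, must be shown not to degenerate on $U$ — this is arranged by a generic choice of the starting vectors at $\tau_0$ together with shrinking $U$, using only that $\operatorname{rank}P_j$ is locally constant. Beyond adapting the symmetric-matrix construction of~\cite{Katsumi}, the one genuinely new ingredient is the detour through $A^\tau=(B^\tau)^t B^\tau$ and the complex structures $J_j(\tau)$, which is what converts the smooth eigenframes of $A^\tau$ into the skew-symmetric normal form $J(\tau)$ with the blocks $\bigl(\begin{smallmatrix}0&-\mu_j\\ \mu_j&0\end{smallmatrix}\bigr)$.
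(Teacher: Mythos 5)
Your argument is correct, but it takes a genuinely different route from the paper. The paper works directly with the complex eigenvectors of $B^\tau$: it freezes eigenvectors $V_1,\dots,V_{n_1}$ of $B^{\tau_0}$ for the eigenvalue $i\lambda_1(\tau_0)$, factors the characteristic polynomial as $Q(\lambda;\tau)=(\lambda-i\lambda_1(\tau))^{n_1}g(\lambda;\tau)$, and uses Cayley--Hamilton to show that $Z_j(\tau):=g(B^\tau;\tau)V_j$ are smooth genuine eigenvectors of $B^\tau$ (the factor $g(B^\tau;\tau)$ kills all other generalized eigenspaces, and diagonalizability upgrades $(B^\tau-i\lambda_1(\tau)I)^{n_1}Z_j=0$ to $(B^\tau-i\lambda_1(\tau)I)Z_j=0$); it then Gram--Schmidts these and takes real and imaginary parts $U_j,W_j$ to produce the orthogonal matrix $O(\tau)=(\sqrt2 W_1,\sqrt2 U_1,\dots)$ realizing $J(\tau)$. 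Your detour through the real symmetric matrix $A^\tau=(B^\tau)^tB^\tau=-(B^\tau)^2$, smooth spectral projections $P_j(\tau)$ (Lagrange/Riesz), real eigenframes, and the orthogonal complex structures $J_j(\tau)=\mu_j(\tau)^{-1}B^\tau|_{V_j}$ achieves the same end entirely over $\mathbb{R}$. What your version buys: it literally reduces to the symmetric-matrix situation of \cite{Katsumi}, it packages the $\pm i\mu_j$ pair of eigenspaces into the single real eigenspace $\ker(A^\tau-\mu_j^2I)$ so the conjugate-pairing needed for the real normal form is automatic rather than something to verify for the Gram--Schmidted complex eigenvectors, and it handles a possibly vanishing $\mu_j$ uniformly. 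What the paper's version buys: it produces the eigenvectors of $B^\tau$ itself by a one-line polynomial formula without introducing projections, at the cost of working in $\mathbb{C}^{2n}$ and then unscrambling real and imaginary parts. Both hinge on the same input from Proposition \ref{prop:normal-form} (local constancy of multiplicities and smoothness of the roots via the implicit function theorem), and your accounting of why the eigenvalues of $A^\tau$ inherit constant multiplicity (conjugate pairs of equal multiplicity, even-dimensional kernel) is the one point genuinely specific to your reduction; it is handled correctly.
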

\begin{proof}
 Let $Q( \lambda ;\tau ) :=\det(B^\tau-\lambda I_{2n})$ be the characteristic polynomial of the matrix $B^\tau$. Write
 \begin{equation*}
   Q( \lambda ;\tau ) = \lambda^{2n} + s_1(\tau) \lambda^{2n-1}+s_2(\tau) \lambda^{k-2}+\cdots + s_{2n}( \tau),
 \end{equation*}
  where the coefficients
$s_1(\tau) ,\cdots ,s_{2n}( \tau)$ are polynomials of $\tau$. By Proposition \ref{prop:normal-form}, $  B^{\tau}$ has pure imaginary eigenvalues $i\lambda_1(\tau),  \cdots,  i\lambda_q(\tau)$ with constant multiplicities $n_1,\ldots, n_q$ and $ \lambda_1(\tau)> \lambda_2(\tau) >\cdots   >\lambda_q(\tau)$. Suppose that  $Q_l$ in (\ref{eq:Q-decomposition}) is of order $k_l$. Then $\frac 1{i^{k_l}}Q_l(i\lambda)$ is a real polynomial with only simple real roots. By applying the implicit function theorem, we see that its
   locally smoothly depend on $\tau$.
So, locally, there exists a polynomial $g(\lambda;\tau )$ in $\lambda$ with coefficients depending
on $\tau$ satisfying
\begin{equation}\label{eq:g}
  g(\lambda;\tau )= ( \lambda - i\lambda_2(\tau ))^{n_1}\ldots   (\lambda -i \lambda_q(\tau )  )^{n_q},
\end{equation}
such that
\begin{equation}\label{eq:Q-g}
    Q ( \lambda ;\tau ) = (\lambda-i\lambda_1(\tau))^{n_1} g(\lambda; \tau)
 \end{equation}
 where $g(i\lambda_1(\tau_0); \tau_0) \neq 0$.

Because the skew symmetric matrix $B^{\tau_0}$ is diagonalizable, there exist
  $n_1$
linearly independent  eigenvectors $V_1,\cdots, V_{n_1}\in\mathbb{C}^r$ with eigenvalue $i\lambda_1(\tau_0)$, i.e.   $B^{\tau_0}V_j=i\lambda_1(\tau_0)V_j$,  $ j=1,\ldots,n_1$. If we set
\begin{equation*}
   Z_j(\tau) = g(B^{\tau }  ;  \tau) V_j,\qquad  j=1,\ldots,n_1,
\end{equation*}
 i.e. $\lambda$ in $ g(\lambda;\tau )$ is replaced by $B^{\tau } $,  then $Z_j(\tau)$'s depend smoothly on $\tau$. Since $ Q (B^{\tau }  ;  \tau) = 0$   by  the well known Cayley-Hamilton theorem,
we have $( B^{\tau } - i\lambda_1(\tau )I)^{n_1} Z_j(\tau)  =0$ by (\ref{eq:Q-g}). Note that $  B^{\tau } - i\lambda_1(\tau )  I_{2n}$ is Hermitian skew symmetric, and so it is diagonalizable. We get
\begin{equation*}
   ( B^{\tau } -i \lambda_1(\tau )  I_{2n}) Z_j(\tau)  =0,
\end{equation*}
i.e. each $ Z_j(\tau)$ is an eigenvector of $B^{\tau }$.
Note that
 \begin{equation*}
    Z_j(\tau_0) =(i \lambda_1(\tau_0) -i \lambda_2(\tau_0))^{n_1}\ldots   \ (i\lambda_1(\tau_0) - i\lambda_q(\tau_0)  )^{n_q}V_j,\qquad  j=1,\ldots,n_1,
 \end{equation*} are linearly independent. It follows that
$Z_1(\tau ), . . ., Z_{n_1}(\tau )$ are linearly independent for every  $\tau$ in a neighborhood of
$\tau_0$. Now we repeat the procedure for $\lambda_2(\tau ) ,\ldots,    \lambda_q(\tau ) $.  Then we  apply the Gram-Schmidt orthogonalization process to
them.

(3) Recall that $i\mu_1(\tau),-i\mu_1(\tau),\ldots,i\mu_n(\tau),-i\mu_{ n}(\tau)$ represent  repeated pure imaginary eigenvalues of $B^\tau$   for real $\tau$. Let $U_j(\tau) +iW_j(\tau) $ be an eigenvector of $B^\tau$ in $\mathbb{C}^{2n}$ with eigenvalue $i\mu_j(\tau)$, i.e.
  $B^\tau(U_j(\tau) +iW_j(\tau) )=i\mu_j(\tau) (U_j(\tau) +iW_j(\tau))$. Since $B^\tau$ is a real matrix for real $\tau$, we see that  $-i \mu_j(\tau) $ is also an eigenvalue of $B^\tau$ with eigenvector $U_j (\tau)-iW_j(\tau) $, and so
\begin{equation}\label{eq:alter}B^\tau U_j(\tau)=-\mu_j(\tau)  W_j(\tau), \qquad
   B^\tau W_j(\tau)=\mu_j(\tau)  U_j(\tau)
\end{equation}for real $\tau$.
Then
\begin{equation*}
   (U_1 +iW_1 ,U_1 -iW_1 ,\ldots,U_n +iW_n , U_{ n} -iW_{ n} )
\end{equation*}
  is a unitary matrix. It follow that
$(U_j+iW_j)^t (\overline{U_k\pm iW_k})=0$ for $j\neq k$, i.e. $ U_j ^t  U_k=0  =W_j ^t  W_k$,  $ W_j ^t  U_k  =0$, and $(U_j+iW_j)^t (\overline{U_j- iW_j})=0$,  $(U_j+iW_j)^t (\overline{U_j+ iW_j})=1$ i.e. $U_j ^t  U_j=\frac 12=  W_j ^t  W_j$, $U_j ^t  W_j  =0$. In summary, the matrix
\begin{equation*}
  O(\tau)=\left(  {\sqrt{2}} W_1(\tau),  {\sqrt{2}}U_1(\tau),\ldots,  {\sqrt{2}} W_n(\tau),  {\sqrt{2}}U_n(\tau)\right)
\end{equation*}
is a $2n\times 2n$ orthogonal matrix. The equations in (\ref{eq:alter})  are equivalent to the equation
 \begin{equation*}
    B^\tau O(\tau)=O(\tau)\left(
  \begin{array}{ccccc}0&-\mu_1 (\tau)&0&0&\cdots\\\mu_1(\tau) &0&0&0 &\cdots \\
 0&0&0& -\mu_2(\tau) &\cdots\\0&0&\mu_2(\tau) &0&\cdots\\\vdots&\vdots&\vdots&\vdots& \ddots
   \end{array}
\right).
 \end{equation*}
 The result follows.
\end{proof}

 Now  in terms of $\tau$-coordinates,
$B^\tau$ can be written as
\begin{equation}\label{eq:B}
   B^\tau(x,y)= \sum_{j =1}^n\mu_j(\tau)\left(-x_{2j-1}^\tau y_{2j }^\tau+x_{2j }^\tau y_{2j-1}^\tau\right),\qquad {\rm for}\quad x, y\in \mathbb{R}^{2n}.
\end{equation}

Recall (see \cite{SW}) that  the $7$-dim quaternionic Heisenberg group is the vector space $\mathbb{R}^4\times \mathbb{R}^3$ with the multiplication given by   (\ref{eq:multipler}) with
\begin{equation}\label{eq:quaternionic-Heisenberg}
\begin{aligned}
B^{1}:&=\left(\begin{array}{cccc} 0 &1 & 0 &0\\ -1& 0& 0& 0\\ 0& 0&0&-1\\0 &0&1 &0\end{array}\right), \   B^{2}:=\left(\begin{array}{cccc} 0 & 0 &
1 &0\\ 0& 0& 0& 1\\ -1& 0&0& 0\\0 &-1& 0 &0\end{array}\right),  \
B^{3}: =\left(\begin{array}{cccc} 0 & 0 & 0 &1\\ 0& 0& -1& 0\\ 0& 1&0& 0\\-1 &0& 0 &0\end{array}\right),
\end{aligned}
\end{equation}
  satisfying the commutating relation of quaternions:
$
   (B^{1})^{2}=(B^{2})^{2}=(B^{3})^{2}=-I_4$, $ B^{1}B^{2}B^{3}=-I_4
. $
Then $B^\tau=\tau_1B^1+\tau_2B^2+\tau_3B^3$ for $\tau\in S^2$ also satisfies
\begin{equation*}
  ( B^\tau)^2=-I_4,
\end{equation*}
  and so its eigenvalues $\mu_1(\tau)\equiv \mu_2(\tau)\equiv 1 $.

\section{The twisted convolution }
For a fixed point $(x,t)\in \mathcal{N}  $, the left multiplication by $(x,t)$ is an affine transformation of  $\mathbb{R}^{{2n+r}}$:
\begin{equation*}
   y\mapsto y+ x  ,\qquad s \mapsto
s  + t +2  B(x,y ),
\end{equation*}
which preserves the Lebegues  measure $dyds$ of $\mathbb{R}^{2n+r}$. The measure $dyds$ is also right invariant, and so it is a Haar measure on the  nilpotent Lie group $\mathcal{N}
$ of step two.
The convolution on $\mathcal{N}  $  is defined as
  \begin{equation*}  \label{eq:convolution}  \varphi
*\psi(y,s):=\int_{\mathcal{N} }\varphi(x ,t )\psi\left((x ,t )^{-1}(y,s)\right) dx dt =\int_{\mathcal{N} }\varphi\left((y,s)(x ,t )^{-1}\right)\psi(x ,t ) dx dt \end{equation*}
for $\phi,\psi\, \in L^1(\mathcal{N} )$.
In general, the algebra $ L^1(\mathcal{N})$ under the convolution is not commutative
\begin{equation*}
   f*g\neq  g*f,\qquad f,g\in  L^1(\mathcal{N}).
\end{equation*}
The {\it
partial Fourier transformation} of a function $\varphi\in  L^1(\mathcal{N})$   is defined as
\begin{equation} \label{eq:partial-Fourier}
   \widetilde{\varphi}_\tau( y)=\int_{\mathbb{R}^r}e^{-i\tau \cdot s}\varphi(y,s)ds, \qquad {\rm for}\quad \tau \in \mathbb{R}^r.
\end{equation}

\begin{prop}\label{prop:continuous} (cf.  \cite[section 4.2]{Mi})
   The
  Fourier transformation and its inverse are continuous on the space $\mathcal{ S}'(\mathbb{R}^{{2n+r}})$ of tempered distributions. So are   the
partial Fourier transformation and its inverse.
\end{prop}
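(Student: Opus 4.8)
The plan is to prove Proposition~\ref{prop:continuous} by reducing the statement for the partial Fourier transformation on $\mathbb{R}^{2n+r}$ to the classical fact that the full Fourier transformation and its inverse are topological isomorphisms of $\mathcal S'(\mathbb{R}^N)$, together with continuity of the transforms on the Schwartz class $\mathcal S(\mathbb{R}^N)$. First I would recall that $\mathcal F\colon\mathcal S(\mathbb{R}^N)\to\mathcal S(\mathbb{R}^N)$ is a linear homeomorphism for the usual Fr\'echet topology given by the seminorms $p_{\alpha,\beta}(\phi)=\sup_x|x^\alpha\partial^\beta\phi(x)|$, and that by duality $\mathcal F$ and $\mathcal F^{-1}$ are continuous (indeed weak-$*$/strongly continuous) on $\mathcal S'(\mathbb{R}^N)$; this is the content of the cited reference and standard distribution theory, so no work is needed there.

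Next I would observe that the partial Fourier transformation $\varphi\mapsto\widetilde\varphi_\tau$ in $(\ref{eq:partial-Fourier})$ is obtained by applying the one-dimensional (or $r$-dimensional) Fourier transform only in the $s$-variables, leaving the $y$-variables untouched. Concretely, writing $\mathbb{R}^{2n+r}=\mathbb{R}^{2n}_y\times\mathbb{R}^r_s$, the partial transform is $\mathcal F_s:=\mathrm{id}_y\otimes\mathcal F_r$, which on the Schwartz class $\mathcal S(\mathbb{R}^{2n}\times\mathbb{R}^r)$ is continuous: for any multi-indices one has $|y^\alpha\partial_y^\beta\tau^\gamma\partial_\tau^\delta\widetilde\varphi_\tau(y)|\le C\sup|y^\alpha\partial_y^\beta s^\delta\partial_s^\gamma\varphi(y,s)|$ up to a finite sum of such seminorms, by differentiating under the integral sign and integrating by parts in $s$ (absorbing the polynomial decay in $s$ into the $L^1$-integrability as is routine on $\mathcal S$). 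Hence $\mathcal F_s$ is a continuous linear map $\mathcal S(\mathbb{R}^{2n+r})\to\mathcal S(\mathbb{R}^{2n+r})$ with continuous inverse $\mathrm{id}_y\otimes\mathcal F_r^{-1}$, i.e.\ a homeomorphism of $\mathcal S$.

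Finally, having established that the partial Fourier transform is a topological isomorphism of $\mathcal S(\mathbb{R}^{2n+r})$, its transpose is automatically a topological isomorphism of the dual $\mathcal S'(\mathbb{R}^{2n+r})$, and one checks that this transpose coincides with the partial Fourier transform of tempered distributions as usually defined (via $\langle\widetilde U_\tau,\phi\rangle=\langle U,\widetilde\phi_{(\cdot)}\rangle$, with the convention matching $(\ref{eq:partial-Fourier})$); the same argument applied to $\mathrm{id}_y\otimes\mathcal F_r^{-1}$ gives continuity of the inverse. The statement for the full Fourier transformation is the classical special case. The only mildly delicate point — and the one I would write out with some care — is the seminorm estimate on $\mathcal S$ showing that differentiation in $\tau$ and multiplication by powers of $\tau$ on the transform side correspond, respectively, to multiplication by powers of $s$ and differentiation in $s$ on the original side, uniformly in $y$; everything else is formal duality. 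There is no serious obstacle here: the result is essentially a packaging of Fubini plus the classical theory, and the proof is short.
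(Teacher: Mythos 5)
Your argument is correct and is the standard one: the paper itself gives no proof of this proposition, merely citing \cite[section 4.2]{Mi}, and your reduction (partial Fourier transform as $\mathrm{id}_y\otimes\mathcal F_r$, seminorm estimates showing it is a homeomorphism of $\mathcal S(\mathbb{R}^{2n+r})$, then duality to pass to $\mathcal S'$) is exactly the classical argument that citation points to. Nothing further is needed.
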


\begin{cor} \label{cor:Minkowski} For $1\leq p\leq\infty$, we have
   \begin{equation}\label{eq:Minkowski}
      \|u*_\tau v\|_{L^p(\mathbb{R}^{{2n}})}\leq \|u \|_{L^1(\mathbb{R}^{{2n}})}\|  v\|_{L^p(\mathbb{R}^{{2n}})}
   \end{equation}
\end{cor}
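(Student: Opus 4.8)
The statement to prove is Corollary~\ref{cor:Minkowski}, namely the Young-type inequality $\|u*_\tau v\|_{L^p(\mathbb{R}^{2n})}\leq \|u\|_{L^1(\mathbb{R}^{2n})}\|v\|_{L^p(\mathbb{R}^{2n})}$ for the twisted convolution $u*_\tau v(y)=\int_{\mathbb{R}^{2n}} e^{-2iB^\tau(y,x)}u(y-x)v(x)\,dx$.

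The plan is to reduce the twisted convolution to an ordinary convolution by a pointwise absolute value estimate, and then invoke the classical Minkowski integral inequality for convolutions on $\mathbb{R}^{2n}$. First I would observe that the twisting factor $e^{-2iB^\tau(y,x)}$ has modulus one, since $B^\tau$ is a real bilinear form and $\tau\in\mathbb{R}^r$. Hence for every $y$,
\begin{equation*}
  |u*_\tau v(y)|=\left|\int_{\mathbb{R}^{2n}} e^{-2iB^\tau(y,x)}u(y-x)v(x)\,dx\right|\leq \int_{\mathbb{R}^{2n}}|u(y-x)|\,|v(x)|\,dx=(|u|*|v|)(y),
\end{equation*}
where $*$ on the right is the ordinary Euclidean convolution. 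Taking $L^p$ norms in $y$ and using monotonicity of the norm gives $\|u*_\tau v\|_{L^p}\leq \big\||u|*|v|\big\|_{L^p}$.

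Next I would apply the standard Young/Minkowski inequality on $\mathbb{R}^{2n}$: for $1\leq p\leq\infty$ and nonnegative functions, $\big\||u|*|v|\big\|_{L^p(\mathbb{R}^{2n})}\leq \|u\|_{L^1(\mathbb{R}^{2n})}\|v\|_{L^p(\mathbb{R}^{2n})}$. This is a classical fact; the usual proof writes $(|u|*|v|)(y)=\int |u(x)|\,|v(y-x)|\,dx$ and applies Minkowski's inequality for integrals to pull the $L^p_y$ norm inside the $x$-integral, yielding $\int |u(x)|\,\|v(\cdot-x)\|_{L^p}\,dx=\|v\|_{L^p}\int|u(x)|\,dx$ by translation invariance of Lebesgue measure. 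Combining this with the pointwise bound from the first step completes the proof.

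There is essentially no serious obstacle here; the only point requiring a word of care is that the estimate must cover the endpoint cases $p=1$ (trivial by Fubini) and $p=\infty$ (trivial by Hölder with the $L^1$ function), so that the claimed range $1\leq p\leq\infty$ is fully justified, and one should note at the outset that the right-hand side being finite already guarantees $u*_\tau v$ is well defined almost everywhere. Since Proposition~\ref{prop:continuous} and the ambient setup guarantee all integrals in sight are meaningful for $u\in L^1$, $v\in L^p$, the argument is complete once these routine verifications are recorded.
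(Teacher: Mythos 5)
Your proof is correct and is exactly the argument the paper gives: bound $|u*_\tau v|\leq |u|*|v|$ pointwise using that the twisting factor has modulus one, then apply the classical Minkowski/Young inequality for Euclidean convolution. The paper states this in one line; your write-up just fills in the routine details.
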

This is follows from $|u*_\tau v|\leq |u|* | v|$, where $*$ is the Euclidean convolution on $\mathbb{R}^{{2n}}$,  and Minkowski's inequality.

{\it Proof Proposition \ref{prop:convolution}}. Taking partial Fourier transformation on both sides of
 \begin{equation*}\label{eq:convolution-B}
   \varphi*\psi( y,s) =\int_{\mathbb{R}^r}\int_{\mathbb{R}^{2n}}\varphi (y-x, s -t-2  B( y,x))\psi(x,t)dxdt
\end{equation*}
with respect to $s$,
  we get
\begin{equation*}\begin{split}
   (\widetilde{  \varphi*\psi })_\tau(y)&=\int_{\mathbb{R}^r}e^{-i\tau \cdot s}ds\int_{\mathbb{R}^r}\int_{\mathbb{R}^{2n}}\varphi(y-x, s -t- 2  B(y,x))\psi(x, t )dx dt\\&
   =\int_{\mathbb{R}^{2n}}dx\int_{\mathbb{R}^r}\int_{\mathbb{R}^r}e^{-i\tau \cdot [\widetilde{s}+t+ 2  B(y,x)]}\varphi(y-x,\widetilde{s})\psi(x,t)d\widetilde{s}dt\\&
   = \int_{\mathbb{R}^{2n}} e^{-2 i B^{ { \tau}}(y,x)} \widetilde{\varphi}_\tau(y-x) \widetilde{\psi}_\tau(x )dx
\end{split}\end{equation*}
by taking transformation $\widetilde{s}:=s -t- 2  B(y,x)$. Equality
 (\ref{eq:twisted-convolution}) follows.\hskip 44mm $\Box$
\vskip 5mm
  \begin{prop} \label{prop:convol-fourier}
    For $\varphi,\psi\in L^1(\mathcal{N})\cap L^2(\mathcal{N})$, we have
    \begin{equation*}\label{eq:convol-fourier}
       \varphi*\psi(y,t)= \frac 1{(2\pi)^{2n+r}} \int_{\mathbb{R}^{2n+r}}e^{it\cdot \tau+iy\cdot \xi}\widehat{\varphi}\left( T_y(\xi),\tau\right)\widehat{\psi}(\xi,\tau) d\xi\, d\tau,
    \end{equation*}
    where $\widehat{\varphi}$ and $\widehat{\psi}$ is the Euclidean Fourier transformation of $\varphi$ and $\psi$, respectively, and $T_y(\xi)\in \mathbb{R}^{2n }$ with
    \begin{equation*}
     T_y(\xi)_l=  \xi_l-2\sum_{k,\beta} B_{kl}^\beta y_k\tau_\beta.
    \end{equation*}
  \end{prop}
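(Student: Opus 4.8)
The plan is to substitute the Euclidean Fourier inversion formula for $\varphi$ directly into the integral defining the group convolution and to exploit the fact that the ``twist'' coming from the group law is \emph{linear} in the integration variable. Starting from $\varphi*\psi(y,s)=\int_{\mathbb R^r}\int_{\mathbb R^{2n}}\varphi(y-x,\,s-t-2B(y,x))\,\psi(x,t)\,dx\,dt$, I would write $\varphi(u,v)=(2\pi)^{-(2n+r)}\int e^{i(u\cdot\eta+v\cdot\tau)}\widehat\varphi(\eta,\tau)\,d\eta\,d\tau$ with $u=y-x$ and $v=s-t-2B(y,x)$, and interchange the order of integration. Since $B(y,x)\cdot\tau=B^\tau(y,x)$ is linear in $x$, say $-2B^\tau(y,x)=-x\cdot w$ with $w=w(y,\tau)$, $w_l=2\sum_{k,\beta}B^\beta_{kl}y_k\tau_\beta$, the integral in $(x,t)$ of $e^{-ix\cdot(\eta+w)-it\cdot\tau}\psi(x,t)$ is precisely $\widehat\psi(\eta+w,\tau)$, leaving
$$\varphi*\psi(y,s)=\frac1{(2\pi)^{2n+r}}\int_{\mathbb R^{2n+r}}e^{iy\cdot\eta+is\cdot\tau}\,\widehat\varphi(\eta,\tau)\,\widehat\psi\big(\eta+w(y,\tau),\tau\big)\,d\eta\,d\tau .$$

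Next I would change variables $\xi=\eta+w(y,\tau)$ for each fixed $y$ and $\tau$; this translation has unit Jacobian and gives $\eta=\xi-w=T_y(\xi)$, exactly the vector in the statement. The only remaining point is to dispose of the factor $e^{iy\cdot\eta}=e^{iy\cdot\xi}e^{-iy\cdot w}$, and here one uses that $y\cdot w=2\sum_{k,l,\beta}B^\beta_{kl}y_ky_l\tau_\beta=2B^\tau(y,y)=0$ by skew-symmetry of $B^\tau$; hence $e^{-iy\cdot w}=1$, and renaming $s$ as $t$ yields the asserted identity with constant $(2\pi)^{-(2n+r)}$. (Alternatively, one can first apply the partial Fourier transform in $\tau$, replace $(\widetilde{\varphi*\psi})_\tau$ by $\widetilde\varphi_\tau*_\tau\widetilde\psi_\tau$ using Proposition \ref{prop:convolution}, perform the same computation on $\mathbb R^{2n}$, and invert the Fourier transform in $\tau$; this is essentially the same argument.)

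The genuine technical issue — and the step I expect to require the most care — is justifying the interchanges of integration and the pointwise Fourier inversion: for $\varphi,\psi\in L^1(\mathcal N)\cap L^2(\mathcal N)$ the transforms $\widehat\varphi,\widehat\psi$ are continuous, bounded and in $L^2$ but need not be in $L^1$, so the iterated integrals above are only formal as written. The clean remedy is a density argument: establish the identity first for $\varphi,\psi$ in the Schwartz class, where Fubini and inversion are unproblematic, and then pass to the limit for general $L^1\cap L^2$ data, using Corollary \ref{cor:Minkowski} (Young's inequality) to control $\varphi*\psi$, the Plancherel theorem to control the Fourier transforms, and Proposition \ref{prop:continuous} to make sense of the right-hand side as a tempered distribution depending continuously on the data.
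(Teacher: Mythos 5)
Your computation is correct: the key points — that the twist $-2B(y,x)\cdot\tau$ is linear in $x$ so it merely shifts the frequency argument of $\widehat\psi$, and that the leftover phase $e^{-iy\cdot w}$ is trivial because $y\cdot w=2B^\tau(y,y)=0$ by skew-symmetry — are exactly the algebraic heart of the paper's proof, and you correctly identify that your route (Fourier inversion of $\varphi$ plus Fubini) is only formal for $L^1\cap L^2$ data and must be repaired by a density argument. The paper packages the same algebra differently and thereby avoids that repair entirely: it writes $\varphi*\psi(y,t)=\langle\psi,\Phi\rangle_{L^2}$ with $\Phi(y',t')=\overline{\varphi(y-y',t-t'-2B(y,y'))}$, applies the Euclidean Plancherel identity (legitimate since the twisted translation is measure-preserving, so $\Phi\in L^1\cap L^2$ whenever $\varphi$ is), and then computes $\overline{\widehat\Phi}(\xi,\tau)=e^{it\cdot\tau+iy\cdot\xi}\widehat\varphi(T_y(\xi),\tau)$ by the same change of variables you use. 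So the Plancherel formulation buys a proof valid directly for $\varphi,\psi\in L^1(\mathcal N)\cap L^2(\mathcal N)$ in one step, with the right-hand side absolutely convergent by Cauchy--Schwarz; your version is equally valid but needs the Schwartz-class approximation and the limiting estimates you sketch (uniform control of $\varphi*\psi$ via the $L^2$--$L^2$ convolution bound and of the right-hand side via Plancherel) to be carried out.
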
  \begin{proof}
  Apply the Euclidean Plancherel formula to the convolution (\ref{eq:convolution-B}) to get
    \begin{equation*}
       \varphi*\psi(y,t)= \frac 1{(2\pi)^{2n+r}} \int_{\mathbb{R}^{2n+r}}\overline{\widehat{\Phi }(\xi,\tau)}\widehat \psi (\xi,\tau) d\xi d\tau,  \end{equation*}
    where $\widehat{\Phi}$ is the Euclidean Fourier transformation of the function
    \begin{equation*}
       \Phi (y',t'):=\overline{\varphi(y-y', t-t' - 2  B(y,y'))},
       \end{equation*}
  for fixed $(y,t)$, {\it i.e.,}
\begin{equation}\label{eq:fourier-cov}\begin{split}
\overline{\widehat{\Phi}(\xi,\tau)}&=\int_{\mathbb{R}^{2n+r}}e^{it'\cdot \tau+iy'\cdot \xi}\varphi(y-y', t-t' - 2  B(y,y'))dy' dt'   \\
&=\int_{\mathbb{R}^{2n+r}}e^{i(t-t''+ 2  B(y,y''))\cdot \tau+i(y-y'')\cdot \xi}\varphi(y'',t'')dy'' dt''   \\
&=e^{i t \cdot \tau+i y \cdot \xi}\int_{\mathbb{R}^{2n+r}}e^{-i t''\tau-i \sum_l\left(\xi_l -2 \sum_{k,\beta} B_{kl}^\beta y_k\tau_ \beta\right) y''_l  }\varphi(y'',t'' )dy'' dt'' \\
&=e^{i t\cdot  \tau+i y \cdot \xi}\widehat{ \varphi}\left(  \ldots,T_y(\xi)_l, \ldots,\tau\right),
\end{split} \end{equation}
by taking the transformation $y''=y-y'$, $t''=t-t' - 2  B(y,y')=t-t' +2 B(y,y'')$, which preserves the volume element.
Here we have used
\begin{equation*}\begin{split}
B(y,y' )\,=&\, B(y,y-y'')\,=\, B(y,y)-B(y,y'')
=  -B(y,y''),
\end{split} \end{equation*} for $y'=y-y''$, which follows from the skew-symmetry of $B$.
  The result follows.
 \end{proof}

\section{The Laguerre basis}~\label{sec:laguerre}
The {\it generalized Laguerre polynomials}
$L_k^{(p)}$ are defined by
  the generating function formula:
\begin{equation}\label{eq:generating-function}
     \sum_{k=0}^\infty    L_{k}^{(p)}( \sigma)z^k=\frac 1{(1-z)^{p+1}}e^{-\frac {\sigma z}{1-z}},\quad \sigma\in \mathbb{R}_+,\quad p\in \mathbb {Z}_{\geq 0},\quad |z|<1.
\end{equation}
(cf.   \cite[section 2.2]{BCT}). In particular, \begin{equation*}
     L_k^{(0)}(\sigma):=\sum_{m=0}^k  { k\choose  m}\frac {(-\sigma)^m}{m!}.
\end{equation*}

The definition $
 \widetilde{ \mathscr L}_{\mathbf{k} }^{(\mathbf{p} )}(y,\tau )$ in (\ref{eq:exponential-Laguerre}) depends on the choice of  local orthonormal basis
$\{v^\tau_1,\ldots, v^\tau_{{2n}}\}$.
By Proposition \ref{prop:orthonormal-basis}, there exists    a subset $E$ of $ S^{r-1}$ of Hausdorff dimension     at most $   r-2 $ and  $  S^{r-1}\setminus E$ can be covered by mutually disjoint Borel subsets $U_1, \ldots, U_N$ such that we can find
   orthonormal basis
$\{v^\tau_1,$ $\ldots, v^\tau_{{2n}}\}$ of $\mathbb{R}^{{2n}}$ normalizing  $B^\tau$, which continuously depend  on $\tau $ in each $ U_j$.
So $
 \widetilde{ \mathscr L}_{\mathbf{k} }^{(\mathbf{p} )}(y,\tau )$ is continuous in each $\mathbb{R}^{{2n}}\times U_j$. We see that it is measurable on $\mathbb{R}^{{2n+r}}$. Moreover, $\widetilde{ \mathscr L}_{\mathbf{k} }^{(\mathbf{p} )}(y,\tau )$ is locally integrable by the following lemma.

\begin{lem}\label{lem:L2}For      $ \tau\in \mathbb{R}^r$ with $B^\tau $   non-degenerate, we have
\begin{equation*}\begin{split}
  \left \|\widetilde{ \mathscr L}_{\mathbf{k} }^{(\mathbf{p} )}(\cdot,\tau )\right\|^2_{L^2(\mathbb{R}^{ 2n })}&= \frac{2^n(\det\left |B^{ {\tau}}\right|)^{\frac 12} }{\pi^n} =\frac{2^n }{\pi^n}\prod_{j=1}^n \mu_j(\tau) , \\
    \left \|\widetilde{ \mathscr L}_{\mathbf{k} }^{(\mathbf{p} )}(\cdot,\tau )\right\|_{L^1(\mathbb{R}^{ 2n })}
    &=  \prod_{j=1}^n \left \|   l_{k_j}^{(p_j)}  \right\|_{L^1(\mathbb{R}^{ 1})} ,
\end{split} \end{equation*}
where $|B^\tau|:= [(B^\tau)^t  B^\tau]^{\frac 12}$.
\end{lem}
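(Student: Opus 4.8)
The plan is to exploit the tensor-product structure in the definition~(\ref{eq:exponential-Laguerre}), which reduces the whole computation to the planar factors $\widetilde{\mathscr L}_{k}^{(p)}(\cdot,\tau)$ on $\mathbb{R}^2$ from~(\ref{eq:exponential-Laguerre0}); these are then evaluated in polar coordinates using the orthonormality of the one-dimensional Laguerre functions~(\ref{eq:l-L}).

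First I would change coordinates from $y=(y_1,\dots,y_{2n})$ to the $\tau$-coordinates $(y_1^\tau,\dots,y_{2n}^\tau)$ of~(\ref{eq:y'-''}). Since $\{v_1^\tau,\dots,v_{2n}^\tau\}$ is orthonormal by Proposition~\ref{prop:orthonormal-basis}, this change has Jacobian $1$, and after it the integrand of each of the two norms becomes a product of functions of the disjoint pairs $\mathbf y_j^\tau=(y_{2j-1}^\tau,y_{2j}^\tau)\in\mathbb{R}^2$, so by Fubini both norms factor as products over $j=1,\dots,n$. Because $B^\tau$ is non-degenerate we have $\mu_j(\tau)>0$ for all $j$, so in the $j$-th factor I can substitute $w=\sqrt{\mu_j(\dot\tau)}\,\mathbf y_j^\tau$, a dilation of $\mathbb{R}^2$ whose Jacobian is $\mu_j(\dot\tau)$, giving $\int_{\mathbb{R}^2}\bigl|\widetilde{\mathscr L}_{k_j}^{(p_j)}(\sqrt{\mu_j(\dot\tau)}\,z,\tau)\bigr|^q\,dz=\mu_j(\dot\tau)^{-1}\int_{\mathbb{R}^2}\bigl|\widetilde{\mathscr L}_{k_j}^{(p_j)}(w,\tau)\bigr|^q\,dw$ for $q=1,2$. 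Combined with the prefactor $\prod_j\mu_j(\dot\tau)$ in~(\ref{eq:exponential-Laguerre}) (respectively its square, in the $L^2$ case), this cancellation leaves exactly one extra factor $\prod_j\mu_j(\dot\tau)$ for $L^2$ and nothing extra for $L^1$.

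It then remains to compute, for a single plane, $\int_{\mathbb{R}^2}\bigl|\widetilde{\mathscr L}_{k}^{(p)}(w,\tau)\bigr|^q\,dw$ from~(\ref{eq:exponential-Laguerre0}). Writing $w=|w|e^{i\theta}$ and using $|(\mathrm{sgn}\,p)^p e^{ip\theta}|=1$, the angular integration contributes a factor $2\pi$, and the substitution $\sigma=2|\tau|\,|w|^2$ turns the radial integral into $\int_0^\infty|l_k^{(|p|)}(\sigma)|^q\,d\sigma$ up to an explicit power of $|\tau|$; tracking these powers yields $\int_{\mathbb{R}^2}\bigl|\widetilde{\mathscr L}_{k}^{(p)}(w,\tau)\bigr|^2\,dw=\tfrac{2|\tau|}{\pi}\|l_k^{(|p|)}\|_{L^2([0,\infty),d\sigma)}^2=\tfrac{2|\tau|}{\pi}$ by the normalization in~(\ref{eq:l-L}), and $\int_{\mathbb{R}^2}\bigl|\widetilde{\mathscr L}_{k}^{(p)}(w,\tau)\bigr|\,dw=\|l_k^{(|p|)}\|_{L^1(\mathbb{R}^1)}$, which is finite since $l_k^{(|p|)}$ is a polynomial times $\sigma^{|p|/2}e^{-\sigma/2}$. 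Multiplying the $n$ factors and inserting $\mu_j(\tau)=|\tau|\,\mu_j(\dot\tau)$ from~(\ref{eq:mu-dot}) gives $\|\widetilde{\mathscr L}_{\mathbf k}^{(\mathbf p)}(\cdot,\tau)\|_{L^2}^2=\tfrac{2^n}{\pi^n}\prod_j\mu_j(\tau)$ and $\|\widetilde{\mathscr L}_{\mathbf k}^{(\mathbf p)}(\cdot,\tau)\|_{L^1}=\prod_j\|l_{k_j}^{(p_j)}\|_{L^1(\mathbb{R}^1)}$. Finally, to rewrite the first expression as $(\det|B^\tau|)^{1/2}$, I would observe that $B^\tau$ is skew-symmetric with eigenvalues $\pm i\mu_1(\tau),\dots,\pm i\mu_n(\tau)$, so $|B^\tau|=[-(B^\tau)^2]^{1/2}$ is symmetric with eigenvalues $\mu_1(\tau),\dots,\mu_n(\tau)$, each of multiplicity two, whence $(\det|B^\tau|)^{1/2}=\prod_{j=1}^n\mu_j(\tau)$.

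There is no substantial obstacle here; the only thing demanding care is the bookkeeping of exponents — in particular that the $\mathbb{R}^2$-dilation by $\sqrt{\mu_j(\dot\tau)}$ has Jacobian $\mu_j(\dot\tau)$ rather than $\sqrt{\mu_j(\dot\tau)}$, and keeping $\mu_j(\dot\tau)$ consistently distinct from $\mu_j(\tau)=|\tau|\,\mu_j(\dot\tau)$ throughout the cancellation.
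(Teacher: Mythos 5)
Your proposal is correct and follows essentially the same route as the paper's proof: change to $\tau$-coordinates (orthogonal, hence measure-preserving), factor over the planes via Fubini, rescale by $\sqrt{\mu_j(\dot\tau)}$ with Jacobian $\mu_j(\dot\tau)$, and evaluate each planar factor in polar coordinates with the substitution $\sigma=2|\tau|\,|w|^2$ using the orthonormality of the $l_k^{(p)}$. The exponent bookkeeping you flag is exactly the point where care is needed, and you have it right.
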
\begin{proof}
Recall that $\mu_j(\dot{\tau})\neq 0$ for non-degenerate $B^\tau $. Note that  for a fixed $\tau\neq 0$ with $B^\tau $  non-degenerate,    the mapping
\begin{equation}\label{eq:y-y-lambda}
  \mathbb{R}^{2n} \longrightarrow \mathbb{R}^{2n},\qquad y \longmapsto y^\tau:=(y_1^\tau, y_2^\tau,\ldots,y_{2n}^\tau ),
\end{equation} in terms of $\tau$-coordinates in (\ref{eq:y'-''}),  is an orthonormal transformation of $\mathbb{R}^{2n}$. So $dy^\tau=dy $. Then  we have
\begin{equation*}\begin{split}\left\|\widetilde{ \mathscr L}_{\mathbf{k} }^{(\mathbf{p} )}(\cdot,\tau)\right\|_{L^2(\mathbb{R}^{ 2n })}^2& =
  \int_{\mathbb{R}^{ 2n }} dy\prod_{j=1}^n\left |\mu_j(\dot{\tau})\widetilde{\mathscr  L}_{k_j}^{(p_j)} \left( \sqrt {\mu_j(\dot{\tau})}{\mathbf y^\tau_j},\tau \right)\right|^2\\
  &=\int_{\mathbb{R}^{ 2n }} dy\prod_{j=1}^n\left | \mu_j(\dot{\tau})\widetilde{\mathscr  L}_{k_j}^{(p_j)} \left( \sqrt {\mu_j(\dot{\tau})}  {  \mathbf y_j } ,\tau\right)\right|^2\\&=\prod_{j=1}^n \mu_j(\dot{\tau}) \cdot \prod_{j=1}^n \int_{\mathbb{R}^{ 2  }} d{\mathbf y_j}\left | \widetilde{\mathscr  L}_{k_j}^{(p_j)} \left(    {\mathbf y_j } ,\tau\right)\right|^2\end{split}\end{equation*}
    by taking the orthogonal  transformation  $ y \longmapsto y^\tau
$  and  dilation $\sqrt {\mu_j(\dot{\tau})}{ \mathbf y_j}\longrightarrow  {\mathbf y_j} $. On the other hand,
  \begin{equation*}\begin{split} \prod_{j=1}^n \int_{\mathbb{R}^{ 2  }} d{\mathbf y_j}\left | \widetilde{\mathscr  L}_{k_j}^{(p_j)} \left( {  \mathbf y_j } ,\tau\right)\right|^2 &
  =\prod_{j=1}^n \int_0^{2\pi}d\theta\int_0^\infty R dR \left | \frac {2 |\tau|}{\pi} l_{k_j}^{(|p_j|)}(2|\tau| R^2) \right|^2\\&
  =\prod_{j=1}^n\frac { |\tau|}{\pi^2}\int_0^{2\pi}d\theta\int_0^\infty   dR \left |   l_{k_j}^{(|p_j|)}(R) \right|^2=  \frac { |2\tau|^n  }{\pi^n}
  \end{split}\end{equation*} by taking transformation $2 |\tau| R^2\rightarrow R$ and the fact that  $\{l_{k }^{(p )}(\sigma)\}_{k\in \mathbb{Z}_+}$ is an orthonormal basis of $L^2([0,\infty))$ for fixed  $p$. And $|\tau|^n\prod_{j=1}^n \mu_j(\dot{\tau})=(\det\left |B^{ {\tau}}\right|)^{\frac 12}  $ by $B^{ {\tau}}$ in (\ref{eq:J}).  The $L^1$ norm of $\widetilde{ \mathscr L}_{\mathbf{k} }^{(\mathbf{p} )}(\cdot,\tau )$ can be obtained in the same way.
\end{proof}

 We define the   functions  $ \mathscr W_{k}^{(p)}$ on $\mathbb{R}^2\times  \mathbb{R}^1$  via their partial Fourier transformation by
\begin{equation*}
 \widetilde{\mathscr  W}_{k}^{(p)}(x,\tau)=\frac {2 |\tau|}{\pi}({\rm sgn}\, p)^pl_k^{(|p|)}(2|\tau| |x|^2)e^{ip\theta}, \qquad \tau\in \mathbb{R}^1, x\in \mathbb{R}^2.
\end{equation*}This is the usual exponential Laguerre functions on the Heisenberg group $\mathcal{H}_1$. The {\it twisted convolution} $\hat *_\tau$ of two   functions $f,g\in  L^1( \mathbb{R}^2)$  is defined as
\begin{equation} \label{eq:twist-W}
    f\hat *_\tau g( y) =\int_{\mathbb{R}^{2 }} e^{- 2 i\tau (-y_1x_2 +y_2x_1 )} f(y-x) g(x)dx_1dx_2
 \end{equation} for $\tau\in \mathbb{R}^1,y\in \mathbb{R}^2$ (cf. \S 1.2 of \cite{BCT}).
The  twisted convolution of
$ \widetilde{\mathscr W}_{k}^{(p)}$ satisfies the following important  property.
\begin{prop}\label{prop:product-W}{\rm (Theorem 1.3.4 of \cite{BCT})}
\begin{equation*}\label{eq:product-W}
      \widetilde{\mathscr  W}_{ ( {p}\wedge{k})-1 }^{(  {p} - {k})}\hat *_\tau\widetilde{\mathscr  W}_{ ({q}\wedge  {m})-1 }^{( {q} - {m})}
      =\delta_{ {k}}^{( {q})}\widetilde{\mathscr W}_{ ({p}\wedge  {m})-1 }^{( {p} - {m})},
         \end{equation*}where ${k}\wedge  {p}=\min(k,p)$ and $\delta_{ {k}}^{( {q})} $ is the Kronecker delta function.
         \end{prop}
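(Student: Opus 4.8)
The plan is to prove this multiplication rule by the generating-function method, staying within the elementary framework of the Laguerre calculus (so, using only the generating identity (\ref{eq:generating-function}) rather than representation theory), after first normalizing away the parameter $\tau$. Since $\widetilde{\mathscr W}_{k}^{(p)}(x,\tau)$ enters only through the prefactor $2|\tau|/\pi$ and the dilated argument $2|\tau||x|^{2}$, while the kernel of $\hat *_\tau$ in (\ref{eq:twist-W}) is $e^{-2i\tau(-y_1x_2+y_2x_1)}$, the substitution $x\mapsto(2|\tau|)^{-1/2}x$ turns $\hat *_\tau$ into $\hat *_{1/2}$ up to a constant Jacobian and carries $\widetilde{\mathscr W}_{k}^{(p)}(\cdot,\tau)$ to a constant multiple of the model function $\pi^{-1}(\operatorname{sgn}p)^{p}l_{k}^{(|p|)}(|x|^{2})e^{ip\theta}$ on $\mathbb R^{2}\cong\mathbb C$. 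Hence it suffices to prove the identity at one fixed value of $\tau$, say $\tau=\tfrac12$; undoing the dilation then gives general $\tau$, and feeding this into the factorization of $*_\tau$ along the $\tau$-normal form (\ref{eq:B}) also yields Proposition \ref{prop:product}.

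Next I would package the whole family into one generating function. Using the reindexing implicit in the statement, put $\alpha=p-1$, $\beta=k-1$, so that the function in the proposition is the special Hermite function $\Phi_{\alpha\beta}$ on $\mathbb C$, with lower Laguerre index $\min(\alpha,\beta)$ and angular frequency $\alpha-\beta$. For $w_1,w_2$ in the unit disc, form $\mathcal G_{w_1,w_2}(z)=\sum_{\alpha,\beta\ge0}\Phi_{\alpha\beta}(z)\,w_1^{\alpha}w_2^{\beta}$ (with the weights forced by the $L^{2}$-normalization built into (\ref{eq:l-L})) and derive its closed form: identity (\ref{eq:generating-function}), applied with $z$ replaced by $w_1w_2$, supplies the radial part --- a Gaussian in $z$ times a $(1-w_1w_2)^{-1}$-type factor --- and the binomial theorem applied to the surplus powers $w_1^{\alpha-\beta}$, $w_2^{\beta-\alpha}$ rebuilds the angular factor $e^{i(\alpha-\beta)\theta}$, so that $\mathcal G_{w_1,w_2}$ is an explicit Mehler-type kernel, a Gaussian in $z$ with coefficients rational in $w_1,w_2$.

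Then I would compute $\mathcal G_{w_1,w_2}\hat *_{1/2}\mathcal G_{v_1,v_2}$. After inserting the closed forms this becomes an absolutely convergent Gaussian integral in $x\in\mathbb R^{2}$ (the quadratic form in $x$ has positive-definite real part because $|w_i|,|v_i|<1$) carrying the symplectic phase $e^{-i(-(z_1-x_1)x_2+(z_2-x_2)x_1)}$. Completing the square in $x$ and evaluating the Gaussian, one finds that the two ``inner'' parameters $w_2$ and $v_1$ survive only inside a scalar prefactor $(1-w_2v_1)^{-1}$, the remaining $z$-dependent factor being exactly $\mathcal G_{w_1,v_2}(z)$. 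Expanding $(1-w_2v_1)^{-1}=\sum_{j\ge0}(w_2v_1)^{j}$ together with all four generating functions and matching the coefficient of $w_1^{\alpha}w_2^{\beta}v_1^{\mu}v_2^{\nu}$, one is forced to $\beta=\mu$ (that is, $k=q$, the Kronecker delta) and, on that locus, recovers $\Phi_{\alpha\nu}$, i.e.\ $\widetilde{\mathscr W}_{(p\wedge m)-1}^{(p-m)}$, with residual constant $1$ --- which is exactly the asserted identity.

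The difficulty, I expect, is concentrated in the last step and is twofold: carrying out the Gaussian integral with the twisting phase cleanly (completing the square over $\mathbb C$ while controlling positivity of the real part), and --- more delicately --- the normalization bookkeeping, i.e.\ tracking the factors $[\Gamma(k+1)/\Gamma(k+p+1)]^{1/2}$ from (\ref{eq:l-L}), the powers $(\operatorname{sgn}p)^{p}$, and the $|p|$-versus-$p$ distinction through the reindexing $\widetilde{\mathscr W}_{(p\wedge k)-1}^{(p-k)}\leftrightarrow\Phi_{p-1,k-1}$, so that the right-hand side carries no leftover weight and a plain Kronecker delta appears. As an independent check --- and as the conceptual reason the above works --- I would also run the argument operator-theoretically: a Wigner-transform computation gives $W_\tau\bigl(\widetilde{\mathscr W}_{(p\wedge k)-1}^{(p-k)}\bigr)=c\,h_{p-1}\otimes\overline{h_{k-1}}$ for the $L^{2}$-normalized Hermite functions $h_j$, so that the $z$-level identity $\mathcal G_{w_1,w_2}\hat *_{1/2}\mathcal G_{v_1,v_2}=(1-w_2v_1)^{-1}\mathcal G_{w_1,v_2}$ is the coordinate shadow of $\bigl(\sum_{\alpha\beta}w_1^{\alpha}w_2^{\beta}\,h_\alpha\otimes\overline{h_\beta}\bigr)\bigl(\sum_{\mu\nu}v_1^{\mu}v_2^{\nu}\,h_\mu\otimes\overline{h_\nu}\bigr)=(1-w_2v_1)^{-1}\sum_{\alpha\nu}w_1^{\alpha}v_2^{\nu}\,h_\alpha\otimes\overline{h_\nu}$; then $W_\tau(f\hat *_\tau g)=W_\tau(f)W_\tau(g)$ together with $\bigl(h_{p-1}\otimes\overline{h_{k-1}}\bigr)\bigl(h_{q-1}\otimes\overline{h_{m-1}}\bigr)=\langle h_{q-1},h_{k-1}\rangle\,h_{p-1}\otimes\overline{h_{m-1}}=\delta_{k}^{q}\,h_{p-1}\otimes\overline{h_{m-1}}$ finishes it.
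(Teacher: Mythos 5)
The paper offers no proof of this proposition: it is imported verbatim as Theorem 1.3.4 of \cite{BCT}, so your argument can only be measured against the cited source, not against anything in the text. On its own terms your proposal is sound, and you have in fact written down both of the standard proofs. The dilation $x\mapsto(2|\tau|)^{-1/2}x$ does reduce everything to a single value of the parameter (with the Jacobian absorbed by the prefactor $2|\tau|/\pi$), the twisted convolution of two Gaussians is an elementary completed square, and the coefficient extraction from $\mathcal G_{w_1,w_2}\hat *_{1/2}\mathcal G_{v_1,v_2}=(1-w_2v_1)^{-1}\mathcal G_{w_1,v_2}$ gives exactly the Kronecker delta and the surviving function $\widetilde{\mathscr W}_{(p\wedge m)-1}^{(p-m)}$. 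Your closing Wigner-transform argument is the cleanest version of all of this and is precisely the mechanism the authors gesture at in their introduction when they identify the Laguerre distributions with the matrix elements $\langle\pi_\tau(y,s)h_{\mathbf k},h_{\mathbf p}\rangle$; it also explains a priori why the structure constant must be $\delta_k^{(q)}$ with no residual factor.

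The one place where the sketch is looser than it reads is the derivation of the closed form of $\mathcal G_{w_1,w_2}$. Because of the weights $[\Gamma(k+1)/\Gamma(k+p+1)]^{1/2}$ in (\ref{eq:l-L}) and the factor $\sigma^{|p|/2}$ attached to the angular exponential, the double sum over $\alpha,\beta$ does not literally split into ``(\ref{eq:generating-function}) evaluated at $w_1w_2$'' times an angular series handled by the binomial theorem: the radial and angular data are entangled through those square roots, and the single-index generating function with those weights has no elementary closed form. The correct statement is that $\mathcal G_{w_1,w_2}$ equals a Gaussian of the shape $c\,(1-w_1w_2)^{-1}\exp\bigl(-\tfrac{1+w_1w_2}{1-w_1w_2}\tfrac{\sigma}{2}+\tfrac{a\,w_1 z+b\,w_2\bar z}{1-w_1w_2}\bigr)$, verified by expanding the cross terms as an exponential series and recombining with the expansion of $(1-w_1w_2)^{-|d|-1}e^{-\sigma w_1w_2/(1-w_1w_2)}$, which is exactly what regenerates the $\Gamma$-weights; equivalently, and more cheaply, $\mathcal G_{w_1,w_2}$ is the Wigner transform of a tensor product of two Hermite generating functions and hence a Gaussian integral of Gaussians. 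With that repair the normalization bookkeeping you worry about takes care of itself, and the rest of the computation closes as you describe.
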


This proposition implies the result of the  twisted convolution of exponential Laguerre functions $\widetilde{\mathscr  L}_{\mathbf{k}  }^{(\mathbf{p}  )}$ in Proposition \ref{prop:product}.

         \vskip 5mm

{\it Proof of Proposition \ref{prop:product}.} For $ \mathbf{{p}}=(p_1,\ldots,p_n)$, $ \mathbf{q}=(q_1,\ldots,q_n) \in \mathbb{Z}^n,$ $ \mathbf{k}=(k_1,\ldots,k_n)$, $ \mathbf{m}=(m_1,\ldots,m_n)\in \mathbb{Z}^n_{\geq 0}  $, we have
       \begin{equation*}\begin{split}
 &\widetilde{\mathscr  L}_{\mathbf{k}  }^{(\mathbf{p}  )}*_\tau \widetilde{\mathscr  L}_{  \mathbf{m} }^{(\mathbf{q} )}(y,\tau)
   =  \int_{\mathbb{R}^{2n}} e^{-2iB^\tau(y,x)} \widetilde{\mathscr  L}_{\mathbf{k}  }^{(\mathbf{p}  )}(y-x)\widetilde{\mathscr  L}_{ \mathbf{m}  }^{(\mathbf{q}  )}(x)dx\\&=
    \int_{\mathbb{R}^{2n}} e^{ -2 i\sum_{j=1}^n \mu_j( {\tau})\left(-y_{2j-1}^{\tau}x_{2j}^{\tau}+y_{2j }^{\tau}x_{2j-1}^{\tau}\right) }\prod_{j=1}^n \mu_j(\dot{\tau})\widetilde{\mathscr  L}_{k_j}^{(p_j)} \left( \sqrt {\mu_j(\dot{\tau})}({\mathbf y^\tau_j} -{\mathbf x^\tau_j}  ),\tau \right)\\&\qquad\qquad\qquad\qquad\qquad\qquad\qquad\qquad\quad\cdot
    \prod_{j=1}^n \mu_j(\dot{\tau})\widetilde{\mathscr  L}_{m_j}^{(q_j)} \left( \sqrt {\mu_j(\dot{\tau})}{\mathbf x^\tau_j },\tau\right) d\mathbf{x^\tau} \\&=\prod_{j=1}^n \mu_j(\dot{\tau})
\int_{\mathbb{R}^{2 }} e^{  -2 i|\tau|\sqrt{ \mu_j( \dot{{\tau}})}\left(-y_{2j-1}^{\tau}x_{2j} +y_{2j }^{\tau}x_{2j-1} \right) }  \widetilde{\mathscr  L}_{k_j}^{(p_j)} \left( \sqrt {\mu_j(\dot{\tau})}{ \mathbf y^\tau_j} -{\mathbf x_j},\tau  \right) \widetilde{\mathscr  L}_{m_j}^{(q_j)} \left( {\mathbf x_j },\tau \right) d{\mathbf x_j}
    \\&
=\prod_{j=1}^n \mu_j(\dot{\tau})\widetilde{\mathscr   W }_{k_j}^{(p_j)}\hat *_{|\tau|} \widetilde{\mathscr   W }_{m_j}^{(q_j)} \left(\sqrt {\mu_j(\dot{\tau})}{\mathbf y^\tau_j},|\tau| \right)  ,
   \end{split}      \end{equation*}
   by using (\ref{eq:B}),  taking the  orthogonal  transformation $x\rightarrow x^\tau $  as in (\ref{eq:y-y-lambda}) and then $\sqrt {\mu_j(\dot{\tau})}{\mathbf x^\tau_j} \rightarrow {\mathbf x_j} $.   Here $\hat *_{|\tau|}$ is the twisted convolution (\ref{eq:twist-W})
 for $|\tau|\in \mathbb{R}$. The result follows from using of Proposition \ref{prop:product-W} for the twisted convolution of
$ \widetilde{\mathscr W}_{k}^{(p)}$. \hskip 70mm $\Box$
 \vskip 5mm

{\it Proof of Theorem \ref{thm:Laguerre calculus }}.
  Recall that $ \{ l_k^{(p)}(\sigma) \}_{   k\in \mathbb{Z}_{\geq 0}} $ in (\ref{eq:l-L}) for any fixed $p$ is an orthonormal basis of $L^2([0,\infty),d\sigma)$, and therefore
 $\{\widetilde{\mathscr  L}_{k}^{(p)}(\cdot,\tau)\}_{ p\in \mathbb{Z} , k\in \mathbb{Z}_{\geq 0}}  $ in (\ref{eq:exponential-Laguerre0}) is an orthogonal   basis of $L^2(\mathbb{R}^2)$ for any fixed $\tau\in \mathbb{R}^r\setminus \{0\}$  with $B^\tau $  non-degenerate. Consequently,
$\{ \widetilde{ \mathscr L}_{\mathbf{k} }^{(\mathbf{p} )}(\cdot,\tau)\}$ in (\ref{eq:exponential-Laguerre}) is an orthogonal      basis of $L^2(\mathbb{R}^{2n})$.

Note that
    for     $ {F}$ and $ {G}$ in  $L^1(\mathcal{N})\cap L^2 (\mathcal{N})$, ${F} *  {G} $ is also in $L^1(\mathcal{N})\cap L^2 (\mathcal{N})$ by  Minkowski's inequality $\|{F} *  {G} \|_{L^p(\mathcal{N})}\leq \|{F}   \|_{L^1(\mathcal{N})}\|  {G} \|_{L^p(\mathcal{N})}$ for $1\leq p\leq+\infty$ (the proof of this inequality works for groups). It is directly to see that for almost all $\tau$, we have $\widetilde{ {F}}_\tau, \widetilde{ {G}}_\tau\in L^1(\mathbb{R}^{2n})\cap L^2 (\mathbb{R}^{2n}) $.

 Note that by   Minkowski's inequality (\ref{eq:Minkowski}) for the twisted convolution, $u*_\tau$ acts continuously on $L^2 (\mathbb{R}^{2n}) $ for $u\in L^1 (\mathbb{R}^{2n}) $.
     Thus,   we have
   \begin{equation*}\begin{split}
   (\widetilde{ {F} *  {G}} )_\tau&= \widetilde{ {F}}_\tau*_\tau \widetilde{ {G}}_\tau=\sum_{|\mathbf{m}| }^\infty \sum_{ |\mathbf{q}|=1}^\infty   {G}_{\mathbf{m}}^{\mathbf{q}}(\tau ) \widetilde{ {F}}_\tau*_\tau\widetilde{ \mathscr L}_{\mathbf{q}\wedge\mathbf{m}-1 }^{(\mathbf{q}-\mathbf{m} )}(\cdot,\tau)\\&= \sum_{|\mathbf{m}| }^\infty \sum_{ |\mathbf{q}|=1}^\infty {G}_{\mathbf{m}}^{\mathbf{q}}(\tau )\left(\sum_{|\mathbf{k}|,|\mathbf{p}|=1}^\infty  {F}_{\mathbf{k}}^{\mathbf{p}}(\tau ) \widetilde{ \mathscr L}_{\mathbf{p}\wedge\mathbf{k}-1 }^{(\mathbf{p}-\mathbf{k} )}(\cdot,\tau)*_\tau \widetilde{ \mathscr L}_{\mathbf{q}\wedge\mathbf{m}-1 }^{(\mathbf{q}-\mathbf{m} )}(\cdot,\tau)\right)\\&=  \sum_{ |\mathbf{m}|   ,|\mathbf{q}|=1}^\infty{G}_{\mathbf{m}}^{\mathbf{q}}(\tau )\left(\sum_{ |\mathbf{p}|=1}^\infty  {F}_{\mathbf{q}}^{\mathbf{p}}(\tau )   \widetilde{ \mathscr L}_{\mathbf{p}\wedge\mathbf{m}-1 }^{(\mathbf{p}-\mathbf{m} )}(\cdot,\tau)\right)
   \end{split} \end{equation*} for almost all $\tau$
   by using Proposition \ref{prop:product}.   Noting that $(\widetilde{ {F} *  {G}} )_\tau\in L^2 (\mathbb{R}^{2n})$, $\sum_{ |\mathbf{m}|   ,|\mathbf{q}|=1}^\infty |{G}_{\mathbf{m}}^{\mathbf{q}}(\tau )|^2 <\infty,$ $\sum_{ |\mathbf{q}|,  |\mathbf{p}|=1}^\infty  |{F}_{\mathbf{q}}^{\mathbf{p}}(\tau ) |^2<  \infty$  and
   \begin{equation*}\begin{split}
       \left\|\sum_{ |\mathbf{p}|=1}^\infty  {F}_{\mathbf{q}}^{\mathbf{p}}(\tau )   \widetilde{ \mathscr L}_{\mathbf{p}\wedge\mathbf{m}-1 }^{(\mathbf{p}-\mathbf{m} )}(\cdot,\tau)\right\|_{L^2 (\mathbb{R}^{2n})}^2&   \leq  \frac{2^n }{\pi^n}\prod_{j=1}^n \mu_j(\tau)\sum_{ |\mathbf{p}|=1}^\infty  |{F}_{\mathbf{q}}^{\mathbf{p}}(\tau ) |^2<  \infty,
   \end{split} \end{equation*}
   we find that
   \begin{equation*}\begin{split}
  \left\langle  (\widetilde{ {F} *  {G}} )_\tau,\widetilde{ \mathscr L}_{\mathbf{p'}\wedge\mathbf{m'}-1 }^{(\mathbf{p'}-\mathbf{m'} )}(\cdot,\tau) \right\rangle_{L^2  }&
   = \sum_{ |\mathbf{m}|   ,|\mathbf{q}|=1}^\infty{G}_{\mathbf{m}}^{\mathbf{q}}(\tau )\left\langle\sum_{ |\mathbf{p}|=1}^\infty  {F}_{\mathbf{q}}^{\mathbf{p}}(\tau )   \widetilde{ \mathscr L}_{\mathbf{p}\wedge\mathbf{m}-1 }^{(\mathbf{p}-\mathbf{m} )}(\cdot,\tau), \widetilde{ \mathscr L}_{\mathbf{p'}\wedge\mathbf{m'}-1 }^{(\mathbf{p'}-\mathbf{m'} )}(\cdot,\tau) \right\rangle_{L^2  }
   \\&
   =\left\|\widetilde{ \mathscr L}_{\mathbf{p'}\wedge\mathbf{m'}-1 }^{(\mathbf{p'}-\mathbf{m'} )}(\cdot,\tau) \right\|_{L^2 (\mathbb{R}^{2n})}^2 \sum_{  |\mathbf{q}|=1}^\infty{G}_{\mathbf{m}'}^{\mathbf{q}}(\tau )   {F}_{\mathbf{q}}^{\mathbf{p}'}(\tau ).
      \end{split} \end{equation*}
 Thus $(\widetilde{ {F} *  {G}} )_\tau $ has the Laguerre expansion
 \begin{equation*}
   (\widetilde{ {F} *  {G}} )_\tau=  \sum_{ |\mathbf{p}|,|\mathbf{m}|    =1}^\infty\left(\sum_{  |\mathbf{q}|=1}^\infty{G}_{\mathbf{m}}^{\mathbf{q}}(\tau )   {F}_{\mathbf{q}}^{\mathbf{p}}(\tau )\right)  \widetilde{ \mathscr L}_{\mathbf{p}\wedge\mathbf{m}-1 }^{(\mathbf{p}-\mathbf{m} )}(\cdot,\tau)  .
 \end{equation*} Here      $ \sum_{ |\mathbf{p}|,|\mathbf{m}|    =1}^\infty\left| \sum_{  |\mathbf{q}|=1}^\infty {G}_{\mathbf{m}}^{\mathbf{q}}(\tau ) {F}_{\mathbf{q}}^{\mathbf{p}}(\tau )\right|^2 $ is   convergent by Cauchy-Schwarz inequality.
    The theorem is proved.   \hskip 132mm $\Box$

\vskip 5mm

Now to recover $F$, we take the inverse partial Fourier transformation
\begin{equation*}
   { {F}}(y,t)=\frac 1{(2\pi)^r}\int_{\mathbb{R}^r} e^{ it\cdot \tau}\widetilde{ {F}}_\tau(y ) d\tau.
\end{equation*}
 In the next section,  we obtain the Laguerre expansion of the kernel of   $\widetilde{\triangle_b}^{-1}$  and then  recover  the kernel of the inverse $\triangle_b^{-1}$ of the sub-Laplacian.

\begin{rem}
On the Heisenberg group $\mathcal{H}_n$, the Laguerre  tensor splits to the positive   and negative parts since the center $\mathbb{R}^1$ has only $2$ directions,
while in the general case, we have to consider each direction represented by a point of the unit sphere $S^{r-1}$ in $\mathbb{R}^r$. At last we integrate
over all directions.
   \end{rem}
   \begin{thm}\label{thm:app} Suppose that $B^\tau $ is non-degenerate for almost all $ \tau\in \mathbb{R}^r$. Then (1) for $f\in L^2(\mathbb{R}^{2n+r})$, we have  $f*\mathscr L_{\mathbf{k} }^{(\mathbf{p} )} \in L^2(\mathbb{R}^{2n+r})$; (2)  for $f\in \mathcal{S}(\mathbb{R}^{2n+r})$, we have
\begin{equation}\label{eq:app}
   \sum_{|\mathbf{k}| =0 }^{+\infty} f* {\mathscr L_{\mathbf{k} }^{(\mathbf{0} )}}     R^{|\mathbf{k}|}\rightarrow f\qquad     {\rm as }\quad R\rightarrow 1-.
\end{equation}
 \end{thm}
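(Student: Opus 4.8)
The plan is to reduce everything to the twisted-convolution picture via the partial Fourier transform and then exploit the orthogonal-basis property of the Laguerre functions established in the proof of Theorem \ref{thm:Laguerre calculus }. For part (1), I would first apply the partial Fourier transform in the central variable $t$, turning $f*\mathscr L_{\mathbf{k}}^{(\mathbf{p})}$ into $\widetilde f_\tau *_\tau \widetilde{\mathscr L}_{\mathbf{k}}^{(\mathbf{p})}(\cdot,\tau)$ for almost every $\tau$, using Proposition \ref{prop:convolution} (extended in the obvious way to tempered distributions paired against Schwartz functions, since $\widetilde{\mathscr L}_{\mathbf{k}}^{(\mathbf{p})}(\cdot,\tau)$ is Schwartz in $y$ for fixed $\tau$). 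By Plancherel for the partial Fourier transform it suffices to show that $\|\widetilde f_\tau *_\tau \widetilde{\mathscr L}_{\mathbf{k}}^{(\mathbf{p})}(\cdot,\tau)\|_{L^2(\mathbb{R}^{2n})}$ is controlled by $\|\widetilde f_\tau\|_{L^2(\mathbb{R}^{2n})}$ uniformly enough to be integrated in $\tau$. The key estimate is that twisted convolution $g\mapsto g*_\tau h$ with a fixed $h$ acts boundedly on $L^2(\mathbb{R}^{2n})$; this follows because $\widetilde{\mathscr L}_{\mathbf{k}}^{(\mathbf{p})}(\cdot,\tau)$ is, after the orthogonal change to $\tau$-coordinates and the dilations used in Lemma \ref{lem:L2}, a tensor product of the one-dimensional Heisenberg Laguerre functions $\widetilde{\mathscr W}_{k_j}^{(p_j)}$, and for these the operator norm of twisted convolution on $L^2$ is a bounded (in fact explicitly computable) quantity — indeed the $\widetilde{\mathscr W}$'s are, up to normalization, the rank-one or finite-rank building blocks of the Weyl correspondence, so $g\mapsto g*_\tau h$ is the Weyl transform of a bounded operator. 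Chasing the dependence on $\tau$ through the dilation factors $\mu_j(\dot\tau)$ and $|\tau|$ gives the desired bound; since $\|\widetilde f_\tau\|_{L^2}^2$ is integrable in $\tau$ (that is Plancherel for $f\in L^2$), part (1) follows.

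For part (2), take $f\in\mathcal S(\mathbb R^{2n+r})$, apply the partial Fourier transform, and observe that on the spectral side the claim becomes
\begin{equation*}
\sum_{|\mathbf k|=0}^{\infty} \widetilde f_\tau *_\tau \widetilde{\mathscr L}_{\mathbf k}^{(\mathbf 0)}(\cdot,\tau)\, R^{|\mathbf k|}\ \longrightarrow\ \widetilde f_\tau(\cdot) \quad\text{as }R\to1^-,
\end{equation*}
for almost every $\tau$, with enough uniformity in $\tau$ to pass back through the inverse partial Fourier transform. Expand $\widetilde f_\tau$ in the Laguerre basis $\{\widetilde{\mathscr L}_{\mathbf p\wedge\mathbf k-\mathbf 1}^{(\mathbf p-\mathbf k)}(\cdot,\tau)\}$; using Proposition \ref{prop:product} (with one factor $\mathbf p=\mathbf m$, i.e. the "diagonal" $\widetilde{\mathscr L}_{\mathbf k-\mathbf 1}^{(\mathbf 0)}$), the twisted convolution $\widetilde f_\tau *_\tau \widetilde{\mathscr L}_{\mathbf k}^{(\mathbf 0)}$ acts as a projection: it kills all basis elements whose "column index" is not $\mathbf k+\mathbf 1$ and preserves those whose column index is $\mathbf k+\mathbf 1$. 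Hence $\sum_{\mathbf k} R^{|\mathbf k|}\, \widetilde f_\tau *_\tau \widetilde{\mathscr L}_{\mathbf k}^{(\mathbf 0)}$ is exactly the operator that multiplies the coefficient of $\widetilde{\mathscr L}_{\mathbf p\wedge\mathbf k-\mathbf 1}^{(\mathbf p-\mathbf k)}$ by $R^{|\mathbf k|-n}$ (the shift by $n$ coming from the indexing convention $\mathbf k\leftrightarrow\mathbf k-\mathbf 1$), an Abel-type summation of the Laguerre expansion. Since the Laguerre expansion of $\widetilde f_\tau$ converges in $L^2(\mathbb R^{2n})$ and the multipliers $R^{|\mathbf k|-n}$ are bounded by a fixed constant for $R$ near $1$ and tend to $1$, dominated convergence in $\ell^2$ gives convergence of the Abel means to $\widetilde f_\tau$ in $L^2(\mathbb R^{2n})$; Schwartz regularity of $f$ gives the decay in $\tau$ needed to conclude $L^2(\mathbb R^{2n+r})$ (indeed, better) convergence after inverting the partial Fourier transform.

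The main obstacle I expect is the bookkeeping of the index shifts and the normalizing constants. The clean statement of Proposition \ref{prop:product} uses the shifted index $\mathbf p\wedge\mathbf k-\mathbf 1$ and the convolution identity there involves the specific diagonal elements $\widetilde{\mathscr L}_{(\mathbf k\wedge\mathbf p)-\mathbf 1}^{(\mathbf p-\mathbf k)}$; matching $f*\mathscr L_{\mathbf k}^{(\mathbf 0)}$ — where $\mathscr L_{\mathbf k}^{(\mathbf 0)}$ carries the \emph{unshifted} lower index $\mathbf k$ — against that identity requires identifying $\mathscr L_{\mathbf k}^{(\mathbf 0)}$ with the diagonal term $\widetilde{\mathscr L}_{(\mathbf k+\mathbf 1)\wedge(\mathbf k+\mathbf 1)-\mathbf 1}^{(\mathbf 0)}$, so that the weight $R^{|\mathbf k|}$ corresponds to the exponent $|\mathbf p|+\cdots$ appropriate for an Abel mean; getting the exponent exactly right (so that \eqref{eq:app} really reproduces $f$ and not a reweighting of it) is where care is needed. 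A secondary, more technical point is justifying the interchange of the sum over $\mathbf k$ with the twisted convolution and with the inverse partial Fourier transform; part (1) together with the uniform-in-$\tau$ $L^2$ bound and the Schwartz decay of $\widetilde f_\tau$ in $\tau$ supplies the dominated-convergence hypotheses, so this is routine once the bound from part (1) is in hand.
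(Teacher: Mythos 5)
Your argument is essentially correct, but it follows a genuinely different path from the paper's, most notably in part (2). For part (1) the paper does not need the Weyl-transform picture at all: it simply notes $|u*_\tau v|\le |u|*|v|$, applies Young's/Minkowski's inequality $\|u*_\tau v\|_{L^2}\le \|v\|_{L^1}\|u\|_{L^2}$ (Corollary \ref{cor:Minkowski}), and uses Lemma \ref{lem:L2}, which shows that $\bigl\|\widetilde{\mathscr L}_{\mathbf{k}}^{(\mathbf{p})}(\cdot,\tau)\bigr\|_{L^1(\mathbb{R}^{2n})}=\prod_j\|l_{k_j}^{(p_j)}\|_{L^1}$ is \emph{independent of} $\tau$; this is exactly the uniform-in-$\tau$ operator bound whose verification you defer to ``chasing the dependence on $\tau$,'' so you should make that step explicit (the $L^2$ norm of $\widetilde{\mathscr L}_{\mathbf{k}}^{(\mathbf{p})}(\cdot,\tau)$ \emph{does} depend on $\tau$ through $\prod_j\mu_j(\tau)$, so it is really the $L^1$ norm that saves the day). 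For part (2) the paper computes the full Euclidean Fourier transform $\widehat{\mathscr L_{k}^{(0)}}(\xi,\tau)=2(-1)^k l_k^{(0)}\bigl(|\xi|^2/(2|\tau|)\bigr)$ via the generating function, sums the Abel series in closed form to the Gaussian multiplier $\prod_j\frac{2}{1+R}\exp\bigl(-\frac{1-R}{1+R}\frac{|\Xi_j^\tau|^2}{4\mu_j(\tau)}\bigr)$, and lets $R\to 1^-$; this explicit kernel is then reused in Section 6 to evaluate $\Psi_R$. Your route instead identifies $\cdot\,*_\tau\widetilde{\mathscr L}_{\mathbf{m}-\mathbf{1}}^{(\mathbf{0})}$, via Proposition \ref{prop:product}, as the orthogonal projection onto ${\rm span}\{\widetilde{\mathscr L}_{(\mathbf{p}\wedge\mathbf{m})-\mathbf{1}}^{(\mathbf{p}-\mathbf{m})}\}_{\mathbf{p}}$ and runs a standard Abel-mean/dominated-convergence argument in the orthogonal Laguerre basis; your index bookkeeping ($\mathbf{m}=\mathbf{k}+\mathbf{1}$, so $R^{|\mathbf{k}|}=R^{|\mathbf{m}|-n}\le 1$) is right, and the interchange of the $\mathbf{m}$-sum with the twisted convolution is licensed by the $L^2$ bound from part (1), as you say. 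What your approach buys is a shorter, more structural proof of the convergence itself, in a clean $L^2(\mathbb{R}^{2n+r})$ sense; what it gives up is the explicit approximate-identity formula that the paper needs anyway for the fundamental-solution computation.
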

\begin{proof} Recall that for a distribution $u\in \mathcal{S}'(\mathbb{R}^m)$ and $\phi \in \mathcal{S} (\mathbb{R}^m)$, we have $\langle \widehat{u}, \phi\rangle=\langle {u},\widehat \phi\rangle $ and $\langle{u},  {\phi}\rangle= \frac 1{(2\pi)^m}\langle\widehat{{u}},\widehat {\phi(-\cdot) }\rangle $, where   $\langle \cdot,\cdot\rangle$ is the dual between $\mathcal{S}'(\mathbb{R}^m)$ and $\mathcal{S}(\mathbb{R}^m)$. Consequently,
  the convolution of  $f \in \mathcal{S} (\mathbb{R}^{2n+r})$ and the  distribution ${\mathscr L_{\mathbf{k} }^{( \mathbf{p}) }}$
 satisfies
 \begin{equation}\label{eq:dual}
     f *{\mathscr L_{\mathbf{k} }^{( \mathbf{p}) }}(y,s):=\left\langle{\mathscr L_{\mathbf{k} }^{( \mathbf{p}) }}, f_{(y,s)}\right\rangle=\frac 1{(2\pi)^r}\left\langle \widetilde {{\mathscr L_{\mathbf{k} }^{( \mathbf{p}) }}},\widetilde{f_1}\right\rangle=\frac 1{(2\pi)^{2n+r}}\left\langle \widehat {{\mathscr L_{\mathbf{k} }^{( \mathbf{p}) }}},\widehat{f_2}\right\rangle
 \end{equation}
 by definition of convolution and the partial Fourier transformation of a distribution, where $f_{(y,s)}  (x ,t )  =f ((y,s)(x ,t )^{-1})$, $f_1(x,t)=f_{(y,s)}  (x ,-t )$  and $f_2(x,t)=f_{(y,s)}  (-x ,-t )$.
Since $\widetilde{{\mathscr L_{\mathbf{k} }^{( \mathbf{p}) }}}$ is locally integrable  and $\widetilde{f_1}\in \mathcal{S}(\mathbb{R}^{2n+r })$, we find that
 \begin{equation*}\begin{split}
    f *{\mathscr L_{\mathbf{k} }^{( \mathbf{p}) }}(y,s)&= \frac { 1}{(2\pi)^r}\int_{\mathbb{R}^{2n+r }}  \widetilde { {\mathscr L_{\mathbf{k} }^{( \mathbf{p}) }} }(x,\tau)\widetilde {f_{1}}\left(x , \tau\right) dx d\tau\\&
    =\frac { 1}{(2\pi)^r}\int_{\mathbb{R}^{ r}}e^{is\cdot \tau  }d\tau  \int_{\mathbb{R}^{2n }} e^{ 2i   B^\tau(y,x) } \widetilde { {\mathscr L_{\mathbf{k} }^{( \mathbf{p}) }} }(x,\tau)\widetilde f_\tau \left( y-x  \right) dx.
 \end{split}\end{equation*}
 Then we get
 \begin{equation*}\begin{split}
    \left\|f *{\mathscr L_{\mathbf{k} }^{( \mathbf{p}) }}\right\|_{L^2(\mathbb{R}^{2n+r})}^2& \leq \int_{\mathbb{R}^{ r}} d\tau  \left\|\widetilde { {\mathscr L_{\mathbf{k} }^{( \mathbf{p}) }} }(\cdot,\tau)\right\|_{L^1(\mathbb{R}^{2n })}  \left\|\widetilde f_\tau \right\|_{L^2(\mathbb{R}^{2n })}^2\\&\leq  \int_{\mathbb{R}^{ r}} d\tau  \prod_{j=1}^n \left \|   l_{k_j}^{(p_j)}  \right\|_{L^1(\mathbb{R}^{ 1})} \left\|\widetilde f_\tau \right\|_{L^2(\mathbb{R}^{2n })}^2=\prod_{j=1}^n \left \|   l_{k_j}^{(p_j)}  \right\|_{L^1(\mathbb{R}^{ 1})} \left\|  f  \right\|_{L^2(\mathbb{R}^{2n+r })}^2
 \end{split}\end{equation*} by   Minkowski's inequality (\ref{eq:Minkowski}) for the twisted convolution and
Lemma \ref{lem:L2}. Consequently,  $f\rightarrow f* {\mathscr L_{\mathbf{k} }^{( \mathbf{p}) }}$ can be extended to a bounded operator from $L^2(\mathbb{R}^{2n+r})$ to itself. Thus    $f*\mathscr L_{\mathbf{k} }^{(\mathbf{p} )}  $ is well defined for $f\in L^2(\mathbb{R}^{2n+r})$ and the above estimate holds.

Note that at the point $0\neq\tau\in \mathbb{R}^r$, we have
\begin{equation*}\label{eq:L-hat}\begin{split}
\widehat{\mathscr L_{\mathbf{k} }^{( \mathbf{0}) }}(\xi,\tau)&= \int_{\mathbb{R}^{2n+r}}e^{-it\cdot \tau-iy\cdot \xi}\mathscr   L_{\mathbf{k} }^{( \mathbf{0}) } (y, t)dtdy= \int_{\mathbb{R}^{2n }}e^{ -iy\cdot  \xi } \widetilde{{\mathscr L}}_{\mathbf{k} }^{ ( \mathbf{0}) } (y ,\tau   )dy \\
&=  \int_{\mathbb{R}^{2n}}e^{ -iy^\tau\cdot\xi^\tau} \widetilde{{\mathscr L}}_{\mathbf{k} }^{ ( \mathbf{0}) } (y  ,\tau  )dy^\tau\\
&= \int_{\mathbb{R}^{2n}}e^{-iy^\tau \cdot \xi^\tau}\prod_{j=1}^n \mu_j(\dot{\tau})\widetilde{\mathscr  L}_{k_j}^{(0)} \left(\sqrt {\mu_j(\dot{\tau})}{\mathbf y^\tau_j}, \tau\right)d{ \mathbf y^\tau_1} \cdots d{\mathbf y^\tau_n}
\\&=\prod_{j=1}^n \int_{\mathbb{R}^{2}}e^{-i{\mathbf y_j^\tau } \cdot{  \Xi_j^\tau} } \mu_j(\dot{\tau})\widetilde{\mathscr  L}_{k_j}^{(0)} \left(\sqrt {\mu_j(\dot{\tau})}{\mathbf y^\tau_j },\tau\right)   d{\mathbf y^\tau_j }
      =\prod_{j=1}^n\widehat{\mathscr  L_{k_j}^{(0)}}\left(\frac {\Xi^\tau_j}{\sqrt { \mu_j(\dot{\tau})}},\tau\right)  ,
  \end{split}\end{equation*}
  by  taking the orthonormal transformation  $ y \longmapsto y^\tau
$ and using definition in (\ref{eq:exponential-Laguerre}),
  where
  \begin{equation*}
     \xi = \xi_1^\tau v_1^\tau+  \cdots+\xi_{2n}^\tau v_{2n}^\tau, \qquad y^\tau \cdot\xi^\tau=y  \cdot\xi \end{equation*}
 (since $\{ v_j^\tau\}$ is an orthonormal basis), and
  \begin{equation}\label{eq:Xi-tau}
     {\Xi^\tau_j}  :=\left(\xi^\tau_{2j-1},\xi^\tau_{2j}\right),\qquad  {j}=1,\ldots,n.
  \end{equation}

Now let us  calculate the Fourier transformation of $\mathscr L_{k}^{(0)}$.
 Note that
\begin{equation}\label{eq:generating-function''}
 \sum_{k=0}^\infty  \widetilde{ \mathscr L}_{k}^{(0)} (x,\tau)z^k=\frac {2|\tau|}{\pi(1-z)}e^{-\frac {1+z}{1-z}|\tau||x|^2},
\end{equation}
since the generating function formula for $l_{k}^{(0)}$ is
\begin{equation}\label{eq:generating-function'}
     \sum_{k=0}^\infty    l_{k}^{(0)}( \sigma)z^k=\frac 1{ 1-z }e^{-\frac {  1+z }{ 1-z }\frac \sigma2},\qquad \sigma\in \mathbb{R}_+,\quad |z|<1,
\end{equation}
by the   generating function formula
(\ref{eq:generating-function}) for $L_{k}^{(0)}$.
Take Fourier transformation with respect to $x\in \mathbb{R}^2$ on both sides of (\ref{eq:generating-function''}) to get
\begin{equation}\label{eq:Fourier-transformation}
 \sum_{k=0}^\infty  \widehat{ \mathscr L_{k}^{(0)}} (\xi,\tau )z^k=\frac {2 }{ 1+z }e^{-\frac {1-z}{1+z}\frac{|\xi|^2}{4|\tau|}}.
\end{equation}Then, applying the   formula
(\ref{eq:generating-function'}) for $z$ replaced by $ -z$ and $\sigma=\frac{|\xi|^2}{2|\tau|}$ at the right hand side (\ref{eq:Fourier-transformation}), we get
\begin{equation}\label{eq:hat-L}
\widehat{ \mathscr L_{k}^{(0)}}(\xi, \tau)=2 (-1)^kl_k^{(0)}\left(\frac {|\xi|^2}{2|\tau|}\right) ,
\end{equation}
for $\xi \in \mathbb{R}^2$. Since $B^\tau $ is non-degenerate for almost all $ \tau\in \mathbb{R}^r$,  we find that
\begin{equation*}
   \widehat{\mathscr L_{\mathbf{k} }^{( \mathbf{0}) }}(\xi,\tau) = \prod_{j=1}^nl_{k_j}^{(0)}\left(\frac {|\Xi^\tau_j |^2}{2\mu_j( {\tau})}\right)\cdot 2 (-1)^{k_j}
\end{equation*}
is uniformly bounded a.e. on $\mathbb{R}^{2n+r}$ by the definition of $l_{k }^{(0)}$ in (\ref{eq:l-L}). Thus, we have
\begin{equation*}\begin{split}
   \sum_{k_j=0}^\infty \widehat{\mathscr  L_{k_j}^{(0)}}\left(\frac {\Xi^\tau_j }{\sqrt { \mu_j(\dot{\tau})}},\tau\right)R^{k_j}&=2 \sum_{k_j=0}^\infty l_{k_j}^{(0)}\left(\frac {|\Xi^\tau_j |^2}{2\mu_j( {\tau})}\right)(-R)^{k_j} = \frac 2{(1+R) }e^{-\frac {1-R}{1+R}\frac {\left|\Xi^\tau_j \right|^2}{4\mu_j( {\tau})}}
  \end{split}\end{equation*}
by using (\ref{eq:hat-L}),  (\ref{eq:mu-dot}), and  the generating function formula
(\ref{eq:generating-function'}).  Note that by the non-degeneracy of
$B^\tau $, we see that
\begin{equation*}\label{eq:hat-L-f}
   \prod_{j=1}^{n}\frac 2{(1+R) }e^{-\frac {1-R}{1+R}\frac {\left|\Xi^\tau_j \right|^2}{4\mu_j( {\tau})}}\longrightarrow 1
\end{equation*}as $R\rightarrow 1-$, uniformly for $(\xi,\tau)$ in any compact set excluding  an arbitrarily  small neighborhood of the degeneracy set of $\mu_j$'s. It is direct to check that
$\widehat{f_2}(\xi,\tau)=e^{it\cdot \tau+iy\cdot \xi}\widehat{f}(\ldots,\xi_l+2\sum_{k,\beta} B_{kl}^\beta y_k\tau_\beta,\ldots\tau) $. Then apply the above result to (\ref{eq:dual}) and change variables  to get
 \begin{equation*}
  \sum_{|\mathbf{k}| =0 }^{+\infty}f *{\mathscr L_{\mathbf{k} }^{(\mathbf{0} )}}(y,s)R^{|\mathbf{k}|}  =\frac 1{(2\pi)^{2n+r}}\int_{\mathbb{R}^{2n+r}}e^{it\cdot \tau+iy\cdot \xi}  \prod_{j=1}^{n}\frac 2{(1+R) }e^{-\frac {1-R}{1+R}\frac {\left|\widehat{\Xi}^\tau_j \right|^2}{4\mu_j( {\tau})}}  \widehat{f}(\xi,\tau) d\xi\, d\tau
 \end{equation*} for $f\in \mathcal{S} (\mathbb{R}^{2n+r})$, where $\widehat{\Xi}^\tau_j$ is obtained from $ {\Xi}^\tau_j$ in (\ref{eq:Xi-tau}) by replacing $\xi_l$  by $T_l(\xi)$.  The result follows.
  \end{proof}

\section{The Laguerre  tensor of  left invariant differential operators}

 For a  differential operator $D$  on the group $\mathcal{N}$, we denote by $\widetilde{D}$ the {\it partial symbol} of $D$ with respect to $\tau\in \mathbb{R}^r$, i.e. $\partial_{s_\beta}$ is replaced by $i\tau_\beta$. Then, we have
 \begin{equation*}
  \widetilde{ Y_{v }}=\partial_{v } +  2 i  B^\tau\left(y,v \right).
\end{equation*}

\begin{prop}\label{prop:vector-tau} (1) For $Y_{a}$, $a=1,\ldots,2n$, given in~\eqref{eq:Y} and $\varphi\in  L^1(\mathcal{N})$ satisfying $Y_a\varphi\in  L^1(\mathcal{N}) $, we have
\begin{equation*}\label{eq:vector-tau'}
   \widetilde{ Y_{v }}\widetilde{\varphi}_\tau=\widetilde{ (Y_{v } \varphi)}_\tau
\end{equation*}

(2)
For $f,g\in    L^1(\mathbb{R}^{2n}) $ satisfying $\partial_a g\in  L^1(\mathcal{N}) $, $a=1,\ldots,2n$,  we have
  \begin{equation*}\label{eq:vector-tau}
    \widetilde{ Y_{v }}(f*_\tau g)=f*_\tau ( \widetilde{ Y_{v }} g).
  \end{equation*}

  (3) For $f,g,h\in    L^1(\mathbb{R}^{2n}) $,  we have $(f*_\tau g)  *_\tau h=f*_\tau( g *_\tau h ) $.
\end{prop}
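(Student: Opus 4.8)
The plan is to obtain all three identities by direct manipulation of the defining integrals, using nothing beyond the bilinearity and skew-symmetry of $B^\tau$ together with elementary changes of variables; the only genuinely analytic point is the justification of differentiation under the integral sign and of boundary-free integration by parts for merely $L^1$ data, which I would handle by first treating $\varphi\in C_c^\infty(\mathcal{N})$ (resp.\ Schwartz $f,g,h$) and then passing to the general case using the continuity of the partial Fourier transform on $\mathcal{S}'$ recorded in Proposition~\ref{prop:continuous}.

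For (1), write out $\widetilde{(Y_v\varphi)}_\tau(y)=\int_{\mathbb{R}^r}e^{-i\tau\cdot s}\bigl(\partial_v\varphi+2B(y,v)\cdot\partial_s\varphi\bigr)(y,s)\,ds$. Moving the $y$-derivative $\partial_v$ out of the $s$-integral turns the first term into $\partial_v\widetilde{\varphi}_\tau(y)$, while integrating by parts in each $s_\beta$ replaces $\partial_{s_\beta}$ by the multiplier $i\tau_\beta$, so the second term becomes $2B(y,v)\cdot(i\tau)\,\widetilde{\varphi}_\tau(y)=2iB^\tau(y,v)\widetilde{\varphi}_\tau(y)$; adding the two recovers $\widetilde{Y_v}\widetilde{\varphi}_\tau$.

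For (2), the substitution $x\mapsto y-x$ together with $B^\tau(y,y-x)=-B^\tau(y,x)$ gives $f*_\tau g(y)=\int e^{2iB^\tau(y,u)}f(u)g(y-u)\,du$, in which only the exponential and $g(y-u)$ depend on $y$. Differentiating under the integral in the direction $v$ and translating back, the derivative of the exponential produces the factor $2iB^\tau(v,y-x)=2iB^\tau(v,y)-2iB^\tau(v,x)$; the piece $2iB^\tau(v,y)=-2iB^\tau(y,v)$ cancels against the multiplier term $2iB^\tau(y,v)$ of $\widetilde{Y_v}$, while $-2iB^\tau(v,x)=2iB^\tau(x,v)$ together with the derivative landing on $g$ assembles $(\widetilde{Y_v}g)(x)$ under the integral. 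This is exactly $f*_\tau(\widetilde{Y_v}g)$. The hypotheses $f,\partial_a g\in L^1$ are used to ensure $\widetilde{Y_v}g\in L^1$ and that the $B^\tau(v,x)$-weighted integral converges, and to license the differentiation under the integral (via the same approximation step).

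For (3), expand $(f*_\tau g)*_\tau h(y)$ as an iterated integral in $z$ and $x$ and apply Fubini --- legitimate since the phases have modulus one and $|f*_\tau g|\le|f|*|g|\in L^1$ by Corollary~\ref{cor:Minkowski} --- then change the inner variable to $w:=x+z$. The entire argument reduces to the cocycle identity
\begin{equation*}
 B^\tau(y,z)+B^\tau(y-z,x)=B^\tau(y,x+z)+B^\tau(x,z),
\end{equation*}
which follows at once from bilinearity and $B^\tau(z,z)=0$; it converts the combined phase into $-2iB^\tau(y,w)-2iB^\tau(w,z)$, after which the $w$- and $z$-integrals separate into $f*_\tau(g*_\tau h)(y)$. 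I expect the bulk of the work to be purely notational bookkeeping with these phase factors, the only substantive (if mild) obstacle being the $L^1$-justification of the interchanges and of the boundary-free integration by parts, uniformly dispatched by $C_c^\infty$-approximation and Proposition~\ref{prop:continuous}.
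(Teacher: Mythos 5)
Your proposal is correct and follows essentially the same route as the paper: the paper's proof consists of exactly the direct computation you give for (2) (differentiating $\int e^{2iB^\tau(y,x)}f(x)g(y-x)\,dx$ under the integral and regrouping the phase derivative $2iB^\tau(v,\cdot)$ with the multiplier $2iB^\tau(y,v)$ to reassemble $\widetilde{Y_v}g$), and simply asserts that (1) and (3) follow from the definitions — which is precisely the integration by parts in $s$ and the Fubini-plus-cocycle-identity argument you spell out. Your added care about justifying the interchanges via $C_c^\infty$-approximation goes beyond what the paper records but introduces nothing at odds with it.
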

\begin{proof}  (1) and (3) follows from definitions directly. Note that
\begin{equation*}\begin{split}
   \widetilde{ Y_{v }}  (f*_\tau g)( y)&=   \widetilde{ Y_{v }}\int_{\mathbb{R}^{2n}} e^{ 2iB^\tau(y,x)} f( x) g(y-x)dx\\&
   = \int_{\mathbb{R}^{2n}} e^{ 2iB^\tau(y,x)} f( x)\{\partial_{v }g(y-x)+2i( B^\tau(v,x)+   B^\tau\left(y,v \right)) g(y-x)\}dx\\&
   = \int_{\mathbb{R}^{2n}} e^{ 2iB^\tau(y,x)} f( x)(\widetilde{ Y_{v }} g)(y-x)dx.
\end{split}\end{equation*} (2) is proved.
\end{proof}

  Let $\{v^\tau_1,$ $\ldots, v^\tau_{{2n}}\}$ be an
orthonormal   basis of $\mathbb{R}^{{2n}}$ given by Proposition \ref{prop:orthonormal-basis}, which   smoothly depends on $\tau $ in an open set $U$. Then
\begin{equation*}
  \widetilde{ Y_{v^\tau_j}}  =\frac {\partial}{\partial {y^\tau_j} }+ 2iB^\tau\left(y,v^\tau_{ j } \right)
\end{equation*}
for $j=1,\ldots, 2n$, by (\ref{eq:mu-j})-(\ref{eq:y'-''}). We need to express   Laguerre  tensor of $ \widetilde{ Y_{v^\tau_j}}$ as an  $\infty\times\infty-\mbox{\rm matrix}$, i.e. the matrix element of $ \widetilde{ Y_{v^\tau_j}}$ acting on the orthogonal basis $\{ \widetilde{ \mathscr L}_{\mathbf{k} }^{(\mathbf{p} )}(y,\tau)\}$ of $L^2(\mathbb{R}^{2n})$.
For this purpose, we introduce the {\it complex $\tau$-coordinates}
\begin{equation*}
  {z_j^\tau}: =   y_{2j-1}^\tau+i y_{2 j}^\tau,
\end{equation*}
and complex horizontal vector fields
\begin{equation*}\label{eq:Z-j}
  Z_j^\tau:=\frac 12\left(Y_{v^\tau_{2j-1}}-i Y_{v^\tau_{2 j}}\right).
\end{equation*}
  Then
\begin{equation}\label{eq:Z-partial}
  \widetilde{Z_j^\tau}= \frac {\partial}{\partial {{{z^\tau_j}}} }+i B^\tau\left(y,v^\tau_{2j-1}\right)+B^\tau\left(y,v^\tau_{2j } \right)
 = \frac {\partial}{\partial {{{z^\tau_j}}} }  -\mu_j( {\tau})  \overline{{z}_j^\tau}  ,
 \end{equation}by (\ref{eq:B}),
where
\begin{equation*}
   \frac {\partial}{\partial {{{z^\tau_j}}} }:= \frac 12\left(\frac {\partial}{\partial y_{2j-1}^\tau}-i\frac {\partial}{\partial y_{2j }^\tau }\right).
\end{equation*}
Similarly, set
\begin{equation*}
   \overline{ Z_j^\tau}:=\frac 12\left(Y_{v^\tau_{2j-1}}+i Y_{v^\tau_{2 j}}\right),
\end{equation*}
and
\begin{equation}\label{eq:Z-bar-partial}
  \widetilde{\overline{Z_j^\tau} }
 = \frac {\partial}{\partial  \overline{{{z^\tau_j}}}} +\mu_j( {\tau})  {{  z_j^\tau}}  ,\qquad {\rm where}\quad \frac {\partial}{\partial {\overline{{{z^\tau_j}}}} }:= \frac 12\left(\frac {\partial}{\partial y_{2j-1}^\tau}+i\frac {\partial}{\partial y_{2j }^\tau }\right).
\end{equation}

We will show that partial symbols
  of  complex vectors $Z_j^\tau$, $j=1,\ldots, n$,  act on Laguerre basis  simply  as shift operators in the following  Lemma.
As a corollary, the  partial   Fourier transformation of the sub-Laplacian is diagonal as shown in Subsection 6.
\begin{lem} \label{lem:Z-bar-partial-L}
   \begin{equation*}\label{eq:Z-L}\begin{split}
     \widetilde{\overline{Z_j^\tau}}  \widetilde{ \mathscr L}_{\mathbf{k} }^{ (-\mathbf{p}) }( y,\tau) &=\left\{\begin{array}{ll}-\sqrt {2\mu_j( {\tau})(k_j+p_j)}  \widetilde{ \mathscr L}_{\mathbf{k} }^{(-\mathbf{ p }+\mathbf{e}_j )}( y,\tau),\qquad \qquad& p_j=1,2,\ldots,\\ -\sqrt {2\mu_j( {\tau}) k_j }  \widetilde{ \mathscr L}_{\mathbf{k}-\mathbf{e}_j  }^{(-\mathbf{ p }+\mathbf{e}_j) }( y,\tau),\qquad& p_j=0,
    \end{array} \right. \\
      \widetilde{ {Z_j^\tau}}  \widetilde{ \mathscr L}_{\mathbf{k} }^{ (\mathbf{p}) }(y,\tau ) &=\left\{\begin{array}{ll} \sqrt { 2\mu_j( {\tau}) (k_j+1)  }   \widetilde{ \mathscr L}_{\mathbf{k}+ \mathbf{e}_j}^{(\mathbf{ p }-\mathbf{e}_j )}( y,\tau),\qquad\qquad \qquad& p_j=1,2,\ldots,\\ \sqrt { 2\mu_j( {\tau}) (k_j+1) }  \widetilde{ \mathscr L}_{\mathbf{k}  }^{(\mathbf{ p }-\mathbf{e}_j) }( y,\tau),\qquad\quad& p_j=0,
    \end{array} \right.
\end{split}\end{equation*}
where $\mathbf{e}_j=(0,\ldots,1,\ldots,0)$ with $1$ appearing in $j$-th entry and $0$ otherwise.
\end{lem}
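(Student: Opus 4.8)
\emph{Proof proposal.} The strategy is to reduce the $n$-dimensional identities to the one-dimensional (Heisenberg) case and then dispatch that case with the standard recurrences for generalized Laguerre polynomials. The first step is to exhibit the product structure underlying (\ref{eq:exponential-Laguerre}): using $\mu_j(\tau)=|\tau|\,\mu_j(\dot\tau)$ one checks that the $j$-th factor $\mu_j(\dot\tau)\widetilde{\mathscr L}_{k_j}^{(p_j)}\bigl(\sqrt{\mu_j(\dot\tau)}\,\mathbf y^\tau_j,\tau\bigr)$, viewed as a function of the single complex variable $z^\tau_j=y^\tau_{2j-1}+iy^\tau_{2j}$, coincides exactly with the Heisenberg exponential Laguerre function $\widetilde{\mathscr W}_{k_j}^{(p_j)}\bigl(z^\tau_j,\mu_j(\tau)\bigr)$ --- the prefactor $\mu_j(\dot\tau)$ absorbs $2|\tau|\mapsto 2\mu_j(\tau)$ and the dilation turns the argument $2|\tau||\,\cdot\,|^2$ into $2\mu_j(\tau)|\,\cdot\,|^2$. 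By (\ref{eq:Z-partial})--(\ref{eq:Z-bar-partial}) the symbols $\widetilde{Z_j^\tau}=\partial/\partial z^\tau_j-\mu_j(\tau)\overline{z^\tau_j}$ and $\widetilde{\overline{Z_j^\tau}}=\partial/\partial\overline{z^\tau_j}+\mu_j(\tau)z^\tau_j$ involve only the $j$-th pair of $\tau$-coordinates, so they act on the $j$-th factor alone and leave the remaining $n-1$ factors untouched. Hence, writing $\sigma:=\mu_j(\tau)>0$, $\widetilde{Z^\sigma}:=\partial_z-\sigma\overline z$, $\widetilde{\overline{Z^\sigma}}:=\partial_{\overline z}+\sigma z$, it suffices to prove for $k\in\mathbb Z_{\geq0}$, $p\in\mathbb Z_{\geq 0}$ that
\begin{equation*}
\widetilde{\overline{Z^\sigma}}\,\widetilde{\mathscr W}_k^{(-p)}(z,\sigma)=\begin{cases}-\sqrt{2\sigma(k+p)}\,\widetilde{\mathscr W}_k^{(-p+1)}(z,\sigma),&p\geq1,\\ -\sqrt{2\sigma k}\,\widetilde{\mathscr W}_{k-1}^{(1)}(z,\sigma),&p=0,\end{cases}
\end{equation*}
together with the analogous formulas for $\widetilde{Z^\sigma}\,\widetilde{\mathscr W}_k^{(p)}(z,\sigma)$; multiplying back the untouched factors and restoring $\sigma=\mu_j(\tau)$ then yields the lemma.

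For the one-dimensional identities I would pass to polar coordinates $z=re^{i\theta}$ and use $\partial_z=\tfrac12 e^{-i\theta}\bigl(\partial_r-\tfrac ir\partial_\theta\bigr)$, $\partial_{\overline z}=\tfrac12 e^{i\theta}\bigl(\partial_r+\tfrac ir\partial_\theta\bigr)$. Since $\widetilde{\mathscr W}_k^{(p)}(z,\sigma)=\tfrac{2\sigma}{\pi}({\rm sgn}\,p)^p\,l_k^{(|p|)}(2\sigma r^2)e^{ip\theta}$, the factor $e^{ip\theta}$ separates off and is responsible for the shift $p\mapsto p\pm1$, while the radial derivatives reduce everything to a first-order ODE identity in the variable $t:=2\sigma r^2$ for the radial function $l_k^{(|p|)}(t)$. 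Inserting the definition $l_k^{(p)}(t)=\bigl[\Gamma(k+1)/\Gamma(k+p+1)\bigr]^{1/2}L_k^{(p)}(t)\,t^{p/2}e^{-t/2}$ and stripping the weight $t^{p/2}e^{-t/2}$, that identity reduces to recurrences for the Laguerre polynomials $L_k^{(p)}$; the ones I would use are
\begin{gather*}
\tfrac{d}{dt}L_k^{(p)}(t)=-L_{k-1}^{(p+1)}(t),\qquad L_k^{(p-1)}(t)=L_k^{(p)}(t)-L_{k-1}^{(p)}(t),\\ t\tfrac{d}{dt}L_k^{(p)}(t)=kL_k^{(p)}(t)-(k+p)L_{k-1}^{(p)}(t),
\end{gather*}
the last two of which combine to give the cancellations $t(L_k^{(p)})'(t)+pL_k^{(p)}(t)=(k+p)L_k^{(p-1)}(t)$ and $t(L_k^{(p)})'(t)+(p-t)L_k^{(p)}(t)=(k+1)L_{k+1}^{(p-1)}(t)$. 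These are precisely the radial parts of the shifted functions; the normalization constants then do the rest --- for instance $(k+p)\bigl[\tfrac{\Gamma(k+1)}{\Gamma(k+p+1)}\bigr]^{1/2}\bigl[\tfrac{\Gamma(k+p)}{\Gamma(k+1)}\bigr]^{1/2}=\sqrt{k+p}$, and the rescaling $\tfrac1r\sqrt{2\sigma r^2}=\sqrt{2\sigma}$ produced by passing between $r$ and $t$ supplies the extra $\sqrt{2\sigma}$, so the coefficients emerge as $\sqrt{2\sigma(k+p)}$, $\sqrt{2\sigma(k+1)}$, resp.\ $\sqrt{2\sigma k}$ when $p=0$. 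The $\widetilde{Z^\sigma}$ formulas would be obtained by the same computation (alternatively via the conjugation relation $\widetilde{\mathscr W}_k^{(-p)}=(-1)^p\overline{\widetilde{\mathscr W}_k^{(p)}}$ and the fact that $\widetilde{Z^\sigma}$ and $-\widetilde{\overline{Z^\sigma}}$ are formal $L^2$-adjoints on the orthogonal system $\{\widetilde{\mathscr W}_k^{(p)}\}$).

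Two points deserve care. First, the case $p=0$ must be isolated: there the angular index moves away from $0$ rather than toward it, so the polynomial part carries no $z$ or $\overline z$ prefactor, only $L_k^{(0)}(2\sigma r^2)$, and the relevant identity becomes $(L_k^{(0)})'-L_k^{(0)}=-L_k^{(1)}$ (for $\widetilde{Z^\sigma}$), resp.\ $(L_k^{(0)})'=-L_{k-1}^{(1)}$ together with $L_k^{(0)}=L_k^{(1)}-L_{k-1}^{(1)}$ (for $\widetilde{\overline{Z^\sigma}}$); this is exactly why the Laguerre degree $k_j$ --- rather than the upper index --- is the one that shifts in the second branch of each formula. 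Second, the genuine burden is keeping the four layers of normalization consistent (the $\tfrac{2\sigma}{\pi}$ prefactor, the sign $({\rm sgn}\,p)^p$, the weight $t^{p/2}e^{-t/2}$, and the constant $[\Gamma(k+1)/\Gamma(k+p+1)]^{1/2}$) together with the chain of changes of variables $y\mapsto y^\tau\mapsto\sqrt{\mu_j(\dot\tau)}\,\mathbf y^\tau_j$ and the scaling $\mu_j(\tau)=|\tau|\mu_j(\dot\tau)$; the one substantive verification inside this is that these last two together turn the $j$-th factor into $\widetilde{\mathscr W}_{k_j}^{(p_j)}(\,\cdot\,,\mu_j(\tau))$, which is what makes all coefficients come out in terms of $\mu_j(\tau)$ alone. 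I expect no conceptual obstacle, only the need to track signs and index shifts in the two Laguerre recurrences with care.
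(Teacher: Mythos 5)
Your proposal is correct and follows essentially the same route as the paper: the operators $\widetilde{Z_j^\tau}$, $\widetilde{\overline{Z_j^\tau}}$ touch only the $j$-th factor of the product (\ref{eq:exponential-Laguerre}), which (after the dilation and the relation $\mu_j(\tau)=|\tau|\mu_j(\dot\tau)$) is exactly a one-variable exponential Laguerre function with parameter $\mu_j(\tau)$, and the resulting one-dimensional identities are exactly the recurrences $t(L_k^{(p)})'+pL_k^{(p)}=(k+p)L_k^{(p-1)}$ and $t(L_k^{(p)})'+(p-t)L_k^{(p)}=(k+1)L_{k+1}^{(p-1)}$ (together with the separately treated $p=0$ branch) that the paper invokes via $(L_k^{(p)})'=-L_{k-1}^{(p+1)}$. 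The only cosmetic difference is that you phrase the single-factor computation in polar coordinates through $\widetilde{\mathscr W}_k^{(p)}$ while the paper differentiates the factor directly with Wirtinger derivatives; the substance is identical.
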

\begin{proof} Recall that by definition (\ref{eq:exponential-Laguerre0})-(\ref{eq:exponential-Laguerre}), for $p,k=0,1,\ldots$,
\begin{equation*}
  \mu_j(\dot{\tau}) \widetilde{\mathscr  L}_{k }^{(-p)} \left( \sqrt {\mu_j(\dot{\tau})}{y^\tau_j} ,\tau\right)=\frac {2 \mu_j( {\tau})}{\pi }(-1)^p\left [\frac {\Gamma(k+1)}{\Gamma(k+p+1)}\right]^{\frac 12}
   L_k^{(p)} (\sigma) e^{-\frac \sigma2  }[2\mu_j( {\tau})]^{\frac p2}   (\overline{{{z^\tau_j}}})^p,
\end{equation*}
with complex $\tau$-coordinate  ${z^\tau_j} $   and
\begin{equation*}\label{eq:sigma}
   \sigma=2\mu_j( {\tau})  |{{z^\tau_j}}  |^2  .
\end{equation*}
Note that
\begin{equation*}
   \frac {\partial}{\partial {\overline{{{z^\tau_j}}}} }\sigma=2\mu_j( {\tau})  {{{z^\tau_j}} },\qquad \frac {\partial}{\partial {\overline{{{z^\tau_j}}}} }(\overline{{{z^\tau_j}}})^p=p(\overline{{{z^\tau_j}}})^{p-1},\qquad \overline{{z_j^\tau}}=|{{z^\tau_j}}  |e^{-i\theta} .
\end{equation*}
Then applying $\widetilde{Z_l^\tau}$ in (\ref{eq:Z-partial}), we get
\begin{equation*}
  \widetilde{\overline{Z_l^\tau}} L_{k }^{(p)} \left( \sigma\right)   = \left\{ 2 L_k^{(p) }{}'\left(\sigma\right) + L_k^{(p)}\left(\sigma \right)\right\}  \cdot \mu_j( {\tau})  {{{z^\tau_j}} }
\end{equation*}
and so
\begin{equation*}\begin{split}&
 \widetilde{\overline{Z_l^\tau}}\left [ \mu_j(\dot{\tau})\widetilde{\mathscr  L}_{k }^{(-p)} \left( \sqrt {\mu_j(\dot{\tau})}{\mathbf{y}^\tau_j} ,\tau \right)\right]   \\=&\delta_l^{(j)}  \left [\frac {\Gamma(k+1)}{\Gamma(k+p+1)}\right]^{\frac 12}\frac {2 \mu_j( {\tau})}{\pi }(-1)^p \left\{L_k^{(p) }{}'\left(\sigma\right) \sigma +p L_k^{(p)}\left(\sigma \right)\right\} e^{-\frac \sigma 2} [2\mu_j( {\tau})]^{\frac p2}   (\overline{{{z^\tau_j}}})^{p-1}\\
=& \left\{\begin{array}{ll}-\delta_l^{(j)}\sqrt {2\mu_j( {\tau})(k+p )}\cdot   \mu_j(\dot{\tau}) \widetilde{\mathscr  L}_{k }^{(-p+1)} \left( \sqrt {\mu_j(\dot{\tau})}{\mathbf{y}^\tau_j} ,\tau\right),\qquad \qquad& p =1,2,\ldots,\\   -\delta_l^{(j)}\sqrt { 2\mu_j( {\tau}) k  }  \cdot \mu_j(\dot{\tau})\widetilde{ \mathscr L}_{ {k}-1  }^{( 1) }\left( \sqrt {\mu_j(\dot{\tau})}{\mathbf{y}^\tau_j },\tau \right),& p =0,
    \end{array} \right.
\end{split}\end{equation*} by using the following identities (cf.   \cite[page 28]{BCT}) for Laguerre polynomials:
\begin{equation}\label{eq:dierivative-L}
   L_k^{(p) }{}'(\sigma)=\frac {d L_k^{(p) }}{d\sigma}(\sigma)=-L_{k-1}^{( p+ 1)}(\sigma)
\end{equation}
for $ p =0,1,\ldots,$ and
\begin{equation*}
 -\sigma L_{k-1}^{(p+1)}(\sigma)+p  L_k^{(p)}(\sigma)=(k+p)L_{k }^{(p-1)}(\sigma)
\end{equation*}
for $ p =1,2,\ldots.$
  For $ p =0$, we only need to use (\ref{eq:dierivative-L}).
So the first identity of (\ref{eq:Z-L}) is proved.

Similarly, we have
\begin{equation*}\label{eq:tilde-L}
  \mu_j(\dot{\tau}) \widetilde{\mathscr  L}_{k }^{(p)} \left( \sqrt {\mu_j(\dot{\tau})}{\mathbf y^\tau_j} ,\tau\right)=\frac {2 \mu_j( {\tau})}{\pi }\left [\frac {\Gamma(k+1)}{\Gamma(k+p+1)}\right]^{\frac 12}
   L_k^{(p)} (\sigma) e^{-\frac \sigma2  }[2\mu_j( {\tau})]^{\frac p2}   ({{z^\tau_j}})^p
\end{equation*}
and \begin{equation*}
   \frac {\partial}{\partial { {{{z^\tau_j}}}} }\sigma=2\mu_j( {\tau}) \overline{  z^\tau_j },\qquad \frac {\partial}{\partial { {{{z^\tau_j}}}} }({{z^\tau_j}})^p=p({{z^\tau_j}})^{p-1},\qquad \mu_j( {\tau}) \overline{{z}_j^\tau}({{z^\tau_j}})^p=\frac \sigma 2 ({{z^\tau_j}})^{p-1}.
\end{equation*}Then  applying $\widetilde{\overline{Z_l^\tau}}$ in (\ref{eq:Z-bar-partial}), we get
\begin{equation*}
 \widetilde{ {Z_l^\tau}} L_{k }^{(p)} \left( \sigma\right)   = \left\{2  L_k^{(p) }{}'\left(\sigma\right)  - L_k^{(p)}\left(\sigma \right)\right\}  \cdot \mu_j( {\tau}) \overline{  z^\tau_j }
\end{equation*}
and so
\begin{equation*}\begin{split}&
  \widetilde{ {Z_l^\tau}}\left [ \mu_j(\dot{\tau})\widetilde{\mathscr  L}_{k }^{( p)} \left(\sqrt {\mu_j(\dot{\tau})}{\mathbf{y}^\tau_j}, \tau \right)\right] \\ =& \delta_l^{(j)} \left [\frac {\Gamma(k+1)}{\Gamma(k+p+1)}\right]^{\frac 12} \frac {2 \mu_j( {\tau})}{\pi}\left\{\left[ L_k^{(p) }{}'\left(\sigma\right) -L_k^{(p) }\left(\sigma\right) \right]\sigma+pL_k^{(p)} (\sigma  )\right \}  e^{-\frac \sigma 2  } [2\mu_j( {\tau})]^{\frac  {p }2}   ( {{{z^\tau_j}}})^{p-1} \\
=& \left\{\begin{array}{ll} \delta_l^{(j)}\sqrt { 2\mu_j( {\tau}) (k+1 ) } \cdot \mu_j(\dot{\tau}) \widetilde{\mathscr  L}_{k +1}^{( p-1)} \left( \sqrt {\mu_j(\dot{\tau})}{\mathbf{y}^\tau_j },\tau \right),\qquad \qquad& p =1,2,\ldots,\\ \delta_l^{(j)}\sqrt { 2\mu_j( {\tau}) (k +1) }\cdot \mu_j(\dot{\tau})  \widetilde{ \mathscr L}_{ {k}  }^{  ( -1)}\left( \sqrt {\mu_j(\dot{\tau})}{\mathbf{y}^\tau_j } ,\tau\right),& p =0,
    \end{array} \right.
\end{split}\end{equation*} by using (\ref{eq:dierivative-L}) and identities (cf. \cite{BCT})
\begin{equation*}\begin{split}
 -\sigma L_{k-1}^{(p+1)}(\sigma)-\sigma L_{k }^{(p )}(\sigma)+p  L_k^{(p)}(\sigma)& =(k+1 )L_{k+1 }^{(p-1)}(\sigma),
\end{split}\end{equation*}for $ p =1,2,\ldots,$   which follows from taking derivatives in the both sides of  the generating function formula
(\ref{eq:generating-function}) with respect to $z$, and $
   L_k^{(p) }(\sigma)+L_{k -1}^{( p+ 1)}(\sigma)=L_{k }^{( p+ 1)}(\sigma)
$ for $p=0$.
\end{proof}

\section{Applications}
\subsection{The fundamental solution to the sub-Laplacian }
Define the sub-Laplacian
\begin{equation*}
\triangle_b:=-\frac 14\sum_{k=1}^{{2n}} Y_{k} Y_{ k}.
\end{equation*}

\begin{prop} \label{prop:ZvZv=ZeZe} For any given $\tau\in  \mathbb{R}^r \setminus\{0\}$ with $B^\tau $  non-degenerate, let $\{v^\tau_1,\ldots, v^\tau_{{2n}}\}$ be the local orthonormal basis  of
$ \mathbb{R}^{{2n}}$ as before. Then, we have
   \begin{equation*} \triangle_b =-\frac 14 \sum_{k=1}^{{2n}} Y_{v^\tau_k}
Y_{v^\tau_k}.  \label{eq:ZvZv=ZeZe}
\end{equation*}
\end{prop}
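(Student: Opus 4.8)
The plan is to reduce the identity, understood as an equality of differential operators on $\mathcal{N}$, to the orthogonality of the transition matrix $O(\tau)=(v^\tau_1,\ldots,v^\tau_{2n})$ furnished by Proposition \ref{prop:orthonormal-basis}. The key observation is that, for a \emph{fixed} $\tau$, each $v^\tau_k$ is a constant vector of $\mathbb{R}^{2n}$ (it does not depend on the base point $y$), so that $Y_{v^\tau_k}=\sum_{l=1}^{2n}(v^\tau_k)_l\,Y_l$ is an honest left-invariant vector field and its scalar coefficients $(v^\tau_k)_l$ commute with every $Y_l$.

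First I would expand, using that the $(v^\tau_k)_l$ are scalars,
\[
\sum_{k=1}^{2n} Y_{v^\tau_k}Y_{v^\tau_k}
=\sum_{k=1}^{2n}\Bigl(\sum_{l=1}^{2n}(v^\tau_k)_l Y_l\Bigr)\Bigl(\sum_{m=1}^{2n}(v^\tau_k)_m Y_m\Bigr)
=\sum_{l,m=1}^{2n}\Bigl(\sum_{k=1}^{2n}(v^\tau_k)_l(v^\tau_k)_m\Bigr)Y_lY_m .
\]
Then I would note that $\sum_{k=1}^{2n}(v^\tau_k)_l(v^\tau_k)_m=\bigl(O(\tau)O(\tau)^t\bigr)_{lm}=\delta_{lm}$, since the columns $v^\tau_1,\ldots,v^\tau_{2n}$ of $O(\tau)$ form an orthonormal basis of $\mathbb{R}^{2n}$, so $O(\tau)$ is orthogonal. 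Hence the double sum collapses to $\sum_{l=1}^{2n}Y_lY_l$, and multiplying by $-\tfrac14$ yields exactly the defining expression for $\triangle_b$.

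I do not expect a genuine obstacle here. The one point deserving a word of care is precisely that $v^\tau_k$ is independent of the spatial variable, so that $Y_l\bigl((v^\tau_k)_m Y_m\bigr)=(v^\tau_k)_m\,Y_lY_m$; this is what legitimizes the rearrangement above. Observe also that no bracket relations $[Y_l,Y_m]$ intervene, because the contraction against $O(\tau)O(\tau)^t=I_{2n}$ selects only the diagonal terms $Y_lY_l$, so there is nothing to symmetrize. Alternatively, one could invoke the general principle that $\sum_k W_k^2$ is independent of the choice of orthonormal frame $\{W_k\}$ for the horizontal distribution, but the direct computation above is the most transparent.
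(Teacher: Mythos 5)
Your argument is correct and coincides with the paper's own proof: both expand $\sum_k Y_{v^\tau_k}Y_{v^\tau_k}$ bilinearly and use the orthogonality relation $\sum_k (v^\tau_k)_l(v^\tau_k)_m=\delta_{lm}$ to collapse the double sum to $\sum_l Y_lY_l$. Your added remark that the coefficients are constant in the spatial variable (so no bracket terms arise) is a correct and worthwhile clarification of the same computation.
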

\begin{proof} For  given $\tau\in  \mathbb{R}^r \setminus\{0\}$, we write
$
   v^\tau_j=\left(a_{j1},\ldots, a_{j(2n)}\right).
$ Then $(a_{jk})$ is an orthogonal  matrix since $\{v^\tau_1,\ldots, v^\tau_{{2n}}\}$ is an orthonormal basis  of
$ \mathbb{R}^{{2n}}$, and so we have
$
   \sum_{k=1}^{2n} a_{kl}a_{km}=\delta^{(l)}_{ m}.
$
It follows from  (\ref{eq:Y})
 that
 \begin{equation*}\sum_{k=1}^{{2n}} Y_{v^\tau_k}
Y_{v^\tau_k}=\sum_{k=1}^{{2n}} \left(\sum_{l=1}^{{2n}}a_{kl}Y_{l}\right)^2
 =\sum_{l,m=1}^{ 2n}\sum_{k=1}^{{2n}} a_{kl} a_{km}Y_{ l}
Y_{ m}=\sum_{k=1}^{{2n}} Y_{k}
Y_{ k}.
\end{equation*}
The result is proved.
 \end{proof}

It follows from Proposition \ref{prop:ZvZv=ZeZe} that   for any fixed $\tau\in  \mathbb{R}^r \setminus\{0\}$, we have
\begin{equation*}
   \triangle_b=-\frac 14 \sum_{j=1}^{{2n}} Y_{v^\tau_{j}}
Y_{v^\tau_{j}}= -\frac 12\sum_{j=1}^{{ n}} ( {Z_j^\tau }
      {\overline{Z_j^\tau }} +
       {\overline{Z_j^\tau }} {Z_j^\tau }),
\end{equation*}
and its partial symbol is
\begin{equation*}\label{eq:triangle-tilde}\widetilde{\triangle_b}=-\frac 14\sum_{j=1}^{{2n}} \widetilde{Y_{ j}}
\widetilde{Y_{ j}}= -\frac 14
  \sum_{j=1}^{{2n}}  \widetilde{ Y_{v^\tau_j}}
 \widetilde{ Y_{v^\tau_j}}= - \frac 12\sum_{j=1}^{{ n}} \left(  \widetilde{Z_j^\tau }
      \widetilde {\overline{Z_j^\tau }} +
       \widetilde {\overline{Z_j^\tau }}  \widetilde {Z_j^\tau }\right).
\end{equation*}
By formula (\ref{eq:Z-L}), we find that
\begin{equation}\label{eq:subL-Laguerre}
-\frac 12  \left ( \widetilde{Z_j^\tau}
      \widetilde{\overline{Z_j^\tau}} +
      \widetilde{\overline{Z_j^\tau}} \widetilde{Z_j^\tau} \right  ) \widetilde{ \mathscr L}_{\mathbf{k} }^{ ( \mathbf{0}) }(y,\tau )=
     \mu_j( {\tau})  (2k_j+1)\widetilde{ \mathscr L}_{\mathbf{k} }^{ (\mathbf{0}) }(y,\tau ),
\end{equation}
 Thus its   Laguerre tensor is
\begin{equation*}\label{eq:Laguerre-subL}
  \mathcal{M}_\tau\left( {\triangle_b}\right)=\left(\sum_{j=1}^n   \mu_j( {\tau})  (2k_j+1)\delta_{k_1}^{(p_1)}\cdots\delta_{k_n}^{(p_n)} \right).
\end{equation*}
Its inverse Laguerre tensor is
\begin{equation*}\label{eq:inverse-Laguerre-subL}
  \mathcal{M}_\tau\left( {\triangle_b^{-1}}\right)=\left(\left[\sum_{j=1}^n   \mu_j( {\tau})  (2k_j+1)\right]^{-1}\delta_{k_1}^{(p_1)}\cdots\delta_{k_n}^{(p_n)}\right).
\end{equation*}

 Since   $\widetilde{ \mathscr L}_{\mathbf{k} }^{(\mathbf{p} )}( y,\tau)$ is a locally integrable  function over $\mathbb{R}^{{2n+r}}$ by Lemma \ref{lem:L2}, it follows from the  continuity of  the inverse
partial Fourier transformation  that  $ { \mathscr L}_{\mathbf{k} }^{(\mathbf{p} )}( y,\tau)$ is a tempered distribution on $\mathbb{R}^{{2n+r}}$. Now
consider a tempered distribution $ { \mathscr F}_{\mathbf{k} }  $,  whose partial Fourier transformation is
\begin{equation*}
 \widetilde{ \mathscr F}_{\mathbf{k} } ( y,\tau)=    \frac 1{\sum_{j=1}^n  \mu_j(  {{\tau}})  (2k_j+1) }  \widetilde{ { \mathscr L}}_{\mathbf{k} }^{(\mathbf{0} )}( y,\tau).
\end{equation*}Note that
\begin{equation*}
  \left \|\widetilde{ \mathscr F}_{\mathbf{k} } ( \cdot,\tau)\right\|_{L^2(\mathbb{R}^{ 2n })}=\frac { {\prod_{j=1}^n \mu_j( {\tau})}}{\sum_{j=1}^n  \mu_j(  {{\tau}})  (2k_j+1) } \left(\frac {2  }{\pi}\right)^{\frac n2}\leq \frac { {\prod_{j=1}^n \mu_j( {\tau})}}{\sum_{j=1}^n  \mu_j(  {{\tau}})    } \left(\frac {2  }{\pi}\right)^{\frac n2},
\end{equation*} by Lemma \ref{lem:L2}.  Since $B^\tau $ is non-degenerate for   all $0\neq\tau\in \mathbb{R}^r$, the upper bound
of the above $L^2$ norm is $C|\tau|^{ n-1}$ for some $C>0$  independent of $\mathbf{k}$ and $\tau$. Therefore,
  $\widetilde{ \mathscr F}_{\mathbf{k} } (y,\tau )$ is locally integrable.
Define
\begin{equation}\label{eq:kernel}
 {\Psi}_R: = \sum_{|\mathbf{k}|=0}^\infty  { \mathscr F}_{\mathbf{k} }   R^{\mathbf{k}},\qquad R \in (-1,1).
\end{equation}  It is a tempered distribution because its   partial Fourier transformation
$
   \sum_{|\mathbf{k}|=0}^\infty  \widetilde{{ \mathscr F}}_{\mathbf{k} } ( \cdot,\cdot) R^{\mathbf{k}}
$ is locally integrable by using the above argument.

 Now we have
$
     {\triangle_b}  { \mathscr F}_{\mathbf{k}} ={ \mathscr L}_{\mathbf{k} }^{ (\mathbf{0}) },
$
due to
\begin{equation*}
  \widetilde{ {\triangle_b}  { \mathscr F}_{\mathbf{k} }} = \frac 1{\sum_{j=1}^n  \mu_j(  {{\tau}})  (2k_j+1) }
\sum_j\frac 12 \left ( \widetilde{Z_j^\tau}
      \widetilde{\overline{Z_j^\tau}} +
      \widetilde{\overline{Z_j^\tau}} \widetilde{Z_j^\tau} \right  ) \widetilde{ \mathscr L}_{\mathbf{k} }^{ ( \mathbf{0}) }( y,\tau)=
  \widetilde{ \mathscr L}_{\mathbf{k} }^{ (\mathbf{0}) }( y,\tau)
\end{equation*}
implied by (\ref{eq:subL-Laguerre}).
Then it follows from   Theorem \ref{thm:app} that
\begin{equation}\label{eq:delta}
   {\triangle_b} {\Psi}_R (y,s )=\sum_{|\mathbf{k}|=0}^\infty  {\triangle_b}  { \mathscr F}_{\mathbf{k} }  R^{\mathbf{k}}=\sum_{|\mathbf{k}|=0}^\infty   { \mathscr L}_{\mathbf{k} }^{ (\mathbf{0}) }( y,s)R^{\mathbf{k}}\rightarrow \delta,\qquad \mbox{ as   } R\rightarrow 1-,
\end{equation}
because ${\triangle_b} $
continuous on the space $\mathcal{ S}'(\mathbb{R}^{{2n+r}})$ of tempered distributions, since
differentiation and multiplication by  a polynomial are continuous  on the space $\mathcal{ S}'(\mathbb{R}^{{2n+r}})$. We claim that
\begin{equation}\label{eq:claim}
 \Psi:= \lim_{R\rightarrow 1-} {\Psi}_R \,\, {\rm is }\,\,{\rm  a } \,\,{\rm tempered } \,\, {\rm distribution}.
\end{equation}
Then we get
\begin{equation*}
    {\triangle_b} {\Psi} ={\triangle_b}\lim_{R\rightarrow 1-} {\Psi}_R  =\lim_{R\rightarrow 1-} {\triangle_b} {\Psi}_R  =\delta
\end{equation*} by the continuity of $ {\triangle_b}$ on   $\mathcal{ S}'(\mathbb{R}^{{2n+r}})$. Thus  ${\Psi}$ is the fundamental solution to the sub-Laplacian.

Let us prove the claim (\ref{eq:claim}) and calculate $\Psi$. By definition (\ref{eq:kernel}), we have
\begin{equation*}\label{eq:kernel'}
  \widetilde{\Psi}_R (y,\tau)=\frac 1{|\tau|}\sum_{|\mathbf{k}|=0}^\infty \frac 1{\sum_{j=1}^n  \mu_j( \dot{{\tau}})  (2k_j+1) }\prod_{j=1}^n\mu_j( \dot{{\tau}})   \widetilde{\mathscr  L}_{k_j}^{(0)} \left( \sqrt {\mu_j(\dot{\tau})}{\mathbf{y}^\tau_j},\tau \right)R^{k_j}
\end{equation*} Note that $\frac 1A=\int_0^\infty e^{-As}ds$ for  $A>0$.
Then  by $\mu_j(\dot{\tau})\neq 0$, we get
\begin{equation*}\begin{split}
   \widetilde{\Psi}_R (y,\tau)&=\frac 1{|\tau|}\sum_{|\mathbf{k}|=0}^\infty\int_0^\infty e^{- \left[\sum_{j=1}^n   \mu_j( \dot{{\tau}})  (2k_j+1)\right]s} \prod_{j=1}^n\mu_j( \dot{{\tau}})   \widetilde{\mathscr  L}_{k_j}^{(0)} \left( \sqrt {\mu_j(\dot{\tau})}{\mathbf{y}^\tau_j},\tau \right)R^{k_j}ds \\
   &=\frac{ {|\tau|}^{n-1}}{\pi^n}\sum_{|\mathbf{k}|=0}^\infty\int_0^\infty e^{- \left[\sum_{j=1}^n    \mu_j( \dot{{\tau}})  (2k_j+1)\right]s} \prod_{j=1}^n 2 \mu_j( \dot{{\tau}})    {  l}_{k_j}^{(0)} \left(2  {\mu_j( {\tau})}|{z^\tau_j}|^2 \right)R^{k_j}ds \\&
   =\frac{ {|\tau|}^{n-1}}{\pi^n} \int_0^\infty \prod_{j=1}^n 2\mu_j( \dot{{\tau}})  e^{-      \mu_j( \dot{{\tau}})   s }\sum_{  {k}_j =0}^\infty\left(e^{-   2  \mu_j( \dot{{\tau}})   s}R\right)^{k_j}  {  l}_{k_j}^{(0)} \left(2 {\mu_j( {\tau})}|{z^\tau_j}|^2 \right)ds\\&
   =\frac{ {|\tau|}^{n-1}}{\pi^n} \int_0^\infty \prod_{j=1}^n \frac {2\mu_j( \dot{{\tau}})e^{-      \mu_j( \dot{{\tau}})   s} }{1-e^{-   2  \mu_j( \dot{{\tau}})   s}R} \exp\left\{ -   {\mu_j( {\tau})}|{z^\tau_j}|^2\cdot \frac {1+  e^{-   2   \mu_j( \dot{{\tau}})   s}R}{1-e^{-   2  \mu_j( \dot{{\tau}})   s}R} \right\} ds
    \end{split}
\end{equation*}by the  generating function formula
(\ref{eq:generating-function'}) for $l_{k}^{(0)}$.
Take partial inverse Fourier transformation and use Fubini's Theorem to get
\begin{equation*}\begin{split}
& {\Psi}_R (y,t)= \int_{\mathbb{R}^r} e^{-it\cdot\tau}\widetilde{\Psi}_R(y,\tau)d\tau
   \\& = \int_0^\infty \frac{|\tau|^{n+r-2}}{\pi^n}d|\tau| \int_0^\infty\frac { ds}{s^n} \int_{S^{r-1}}d\dot{\tau}  e^{-i|\tau| t\cdot\dot{\tau}}\prod_{j=1}^n \frac {2 \mu_j(  {\dot{{\tau}}}) s }{e^{   \mu_j( \dot{{\tau}})   s}-e^{-     \mu_j( \dot{{\tau}})   s}R} e^{  -  |\tau| {\mu_j( \dot{{\tau}})}|{z^\tau_j}|^2\cdot\frac {1+  e^{-   2   \mu_j( \dot{{\tau}})   s}R}{1-e^{-   2  \mu_j( \dot{{\tau}})   s}R}  }.
    \end{split}
\end{equation*}
Set
\begin{equation*}
  \lambda= \dot{\tau}s\in\mathbb{ R}^r.
\end{equation*}  Then $\dot{\lambda}=\dot{\tau}$, $ B^\lambda  =s    B^{\dot{\tau}}  $ and ${z_j^\lambda}={z^\tau_j}$. Since  $v^\tau_j$  is the eigenvector of the symmetric matrix $|B^\lambda|^2=  (B^\lambda)^t  B^\lambda $, and so it  is the eigenvector of $|B^{\dot{\lambda}}|  \frac {1+  e^{-   2|B^\lambda|}R}{1- e^{-   2|B^\lambda|}R}  $. If we write $y=\sum_j y^\tau_j v^\tau_j$, we get
\begin{equation*}
\sum_{j=1}^n  {\mu_j( \dot{{\tau}})}|{z^\tau_j}|^2\cdot\frac {1+  e^{-   2   \mu_j( \dot{{\tau}})   s}R}{1-e^{-   2  \mu_j( \dot{{\tau}})   s}R} =\left\langle \left|B^{\dot{\lambda}}\right| \cdot\frac {1+  e^{-   2|B^\lambda|}R}{1- e^{-   2|B^\lambda|}R} y,y\right\rangle=:B(y,\lambda;R).
\end{equation*}Write $C (\lambda;R) : =\prod_{j=1}^n (e^{   \mu_j( \lambda)    }-e^{-      \mu_j( \lambda)   }R)$.
Then we integrate out the variable $|\tau|$ to get
\begin{equation}\label{eq:psi-R}
\begin{split}
{\Psi}_R(y,t) &
    =\frac  {2^n}{\pi^n} \int_0^\infty \frac {d|\lambda|}{|\lambda|^n} \int_{S^{r-1}}d\dot{\lambda} \frac { \det  |B^\lambda|^{\frac 12}     } {C (\lambda;R) }\int_0^\infty |\tau|^{n+r-2} e^{-  |\tau| \left[i t\cdot \dot{\lambda}  +  B(y,\lambda;R) \right] }d|\tau|\\ &
    =\frac  {{2^n}\Gamma(n+r-1)}{\pi^n} \int_0^\infty \frac {|\lambda|^{r-1}d|\lambda|}{|\lambda|^{n+r-1}} \int_{S^{r-1}}d\dot{\lambda}  \frac { \det  |B^\lambda|^{\frac 12}     } {C (\lambda;R) }\frac 1 { ( B(y,\lambda;R)+it \cdot \dot{{\lambda}})^{n+r-1} }\\
     &=\frac{{2^n}\Gamma(n+r-1)}{\pi^n}   \int_{\mathbb{R}^r}   \frac { \det  |B^\lambda|^{\frac 12}     } {C (\lambda;R) }\frac { d\lambda} { ( |\lambda| B(y,\lambda;R)+it\cdot {\lambda})^{n+r-1}  }
   \end{split}
\end{equation}
by using Fubini's theorem, and
$
 \Gamma(m)  A^{-m}=\int_0^\infty s^{m-1} e^{-As}ds
$ for ${\rm Re }A>0$.
Noting that
\begin{equation*}\begin{split}&
 \lim_{R\rightarrow 1-} |\lambda| B(y,\lambda;R)=  \langle |B^\lambda| \coth|B^\lambda| y,y\rangle,\\
 & \lim_{R\rightarrow 1-} C (\lambda;R)  = \det  (2\sinh    |B^\lambda|^{\frac 12})=2^n\det  (\sinh    |B^\lambda|^{\frac 12}) ,
\end{split}\end{equation*}we see that the last integral in (\ref{eq:psi-R}) is absolutely convergent for $y\neq 0$ when $\mu_j(\dot{\tau}) $'s have a positive lower bound. Then
we get\begin{equation*}\begin{split} {\Psi} (y,t) &
     =
\lim_{R\rightarrow 1-}{\Psi}_R(y,t)\\&
     =\frac{\Gamma(n+r-1)}{\pi^n}   \int_{\mathbb{R}^r}    \det\left[\frac  { |B^\lambda| }{ \sinh   |B^\lambda|    } \right]^{\frac 12}\frac { d\lambda} { ( \langle |B^\lambda| \coth|B^\lambda| y,y\rangle+it\cdot {\lambda})^{n+r-1}  }.
    \end{split}
\end{equation*}
For $y=0$, it is standard to use analytic continuation (cf. e.g. \cite{ww}).  We omit details.

   \subsection{The Szeg\"o kernel for $k$-CF  functions on the quaternionic Heisenberg group}
   The $7$-dimensional  quaternionic Heisenberg group $\mathscr{H}$ is the nilpotent group $\mathbb{R}^4\times \mathbb{R}^3$ with $B$ given by (\ref{eq:quaternionic-Heisenberg}).
Recall    the \emph{tangential $k$-Cauchy-Fueter operator} \cite{SW}
\begin{equation*}\begin{aligned}\label{cf}
\mathscr{D}^{(k)}_{b}:C^{\infty}(\mathscr{H}, \mathbb{C}^{k+1})
&\rightarrow C^{\infty}(\mathscr{H}, \mathbb{C}^{2k}
 )\qquad k=1,2,\cdots.
\end{aligned}\end{equation*}
   A $\mathbb{C}^{k+1}$-valued distribution $f$ on $\mathscr{H}$ is called \emph{$k$-CF}  if $\mathscr{D}^{(k)}_{b}f=0$ in the sense of distributions. The tangential $k$-Cauchy-Fueter operator and the $k$-CF functions on
the quaternionic Heisenberg group are quaternionic counterparts of the
tangential CR operator and CR functions on the Heisenberg group in the
theory of several complex valuables. Consider the space of  $L^{2}$-integrable  $k$-CF  functions
\begin{align*}
\mathcal{A}(\mathscr{H},\mathbb{C}^{k+1})=\left\{f\in L^{2}(\mathscr{H},\mathbb{C}^{k+1});\mathscr{D}^{(k)}_{b}f=0\right\}.
\end{align*} The  orthogonal  projection operator
\begin{align*}\label{1.11}
P:L^{2}(\mathscr{H},\mathbb{C}^{k+1})\longrightarrow
\mathcal{A}(\mathscr{H},\mathbb{C}^{k+1})
\end{align*}is called the \emph{Szeg\"o projection operator}. We will drop superscripts $(k)$ for simplicity. The Szeg\"o kernel of $P$ is given by the following theorem.

\begin{thm}\label{main} {\rm (Theorem 1.1 in \cite{SW})}
The Szeg\"o kernel of the Szeg\"o projection is an  End$(\mathbb{C}^{k+1})$-valued homogeneous function
 \begin{equation*}
    S(y,s)=
\frac{2^7\cdot 3}{(2\pi)^{5}}\int_{S^{2}}\frac{P^{{\tau}}}
{\left(|y|^{2}-i\tau\cdot s\right)^5}
\hbox{d}\tau,
 \end{equation*}
for $y\neq0,$ where $P^{ \tau} $ is the orthogonal projection to vector $e_{1}^{\tau}$ given by (\ref{e1}). In general, for $(y,s)\neq(0,0),$ it is given by the integral with changed contour.
\end{thm}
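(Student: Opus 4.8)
The plan is to run the Laguerre calculus of Theorems~\ref{thm:Laguerre calculus } and~\ref{thm:app} for the left--invariant system $\mathscr{D}_b^{(k)}$ on $\mathscr{H}=\mathbb{R}^4\times\mathbb{R}^3$. Here $n=2$, $r=3$, and by (\ref{eq:quaternionic-Heisenberg}) one has $(B^\tau)^2=-I_4$, so $B^\tau$ is non--degenerate for every $\tau\neq0$ and $\mu_1(\tau)\equiv\mu_2(\tau)\equiv1$; the Laguerre distributions factorise, $\widetilde{\mathscr{L}}_{\mathbf{k}}^{(\mathbf{p})}(y,\tau)=\prod_{j=1}^{2}\widetilde{\mathscr{L}}_{k_j}^{(p_j)}(\mathbf{y}^\tau_j,\tau)$, and in particular, from (\ref{eq:exponential-Laguerre}) and $l_0^{(0)}(\sigma)=e^{-\sigma/2}$, $\widetilde{\mathscr{L}}_{\mathbf{0}}^{(\mathbf{0})}(y,\tau)=\frac{4|\tau|^2}{\pi^2}e^{-|\tau||y|^2}$. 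Since $\mathscr{D}_b^{(k)}$ is built from the horizontal fields $Y_a$ it commutes with left translations, so the Szeg\"o projection $P$ is a matrix--valued left--invariant convolution operator; working in the partial--Fourier / Laguerre--tensor picture (distributional convolution being legitimate here exactly as in Theorem~\ref{thm:app}), $\mathcal{M}_\tau(P)$ is, componentwise by Theorem~\ref{thm:Laguerre calculus }, a self--adjoint idempotent in the algebra of $\infty\times\infty$ matrices with entries in $\mathrm{End}(\mathbb{C}^{k+1})$.

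First I would take the partial Fourier transform in $s$ and, using the frame $\{v^\tau_1,\dots,v^\tau_4\}$ of Proposition~\ref{prop:orthonormal-basis} and the expression of $\mathscr{D}_b^{(k)}$ in terms of the complex fields $Z^\tau_1,Z^\tau_2,\overline{Z^\tau_1},\overline{Z^\tau_2}$ recalled from \cite{SW}, rewrite $\mathscr{D}_b^{(k)}f=0$ as a first--order $\mathbb{C}^{k+1}\to\mathbb{C}^{2k}$ system in the partial symbols $\widetilde{Z^\tau_j},\widetilde{\overline{Z^\tau_j}}$ acting on $\widetilde{f}_\tau$. Expanding each component of $\widetilde{f}_\tau$ in the Laguerre basis and applying the shift relations of Lemma~\ref{lem:Z-bar-partial-L}, this becomes a homogeneous linear recursion among the Laguerre coefficients with indices $(\mathbf{k},\mathbf{p})$ and $(\mathbf{k}\pm\mathbf{e}_j,\mathbf{p}\mp\mathbf{e}_j)$. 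Solving this recursion is the heart of the proof: one checks that every $L^2$ solution is a lowest--weight vector, i.e.\ supported on the single index $\mathbf{k}=\mathbf{0},\,\mathbf{p}=\mathbf{0}$, and that the admissible $\mathbb{C}^{k+1}$--values there form the one--dimensional space spanned by the vector $e_1^\tau$ of \cite{SW} (see~(\ref{e1})). Hence $\mathcal{M}_\tau(P)$ has a single non--vanishing entry — the one carried by $\widetilde{\mathscr{L}}_{\mathbf{0}}^{(\mathbf{0})}$, at index $(\mathbf{1},\mathbf{1})$ in the convention of Proposition~\ref{prop:product} — and that entry equals the rank--one orthogonal projection $P^\tau$ of $\mathbb{C}^{k+1}$ onto $\mathbb{C}e_1^\tau$; idempotency is then just $(P^\tau)^2=P^\tau$.

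It remains to invert the Laguerre transform. Since $\widetilde{\mathscr{L}}_{\mathbf{0}}^{(\mathbf{0})}*_\tau\widetilde{\mathscr{L}}_{\mathbf{0}}^{(\mathbf{0})}=\widetilde{\mathscr{L}}_{\mathbf{0}}^{(\mathbf{0})}$ (Proposition~\ref{prop:product}), the convolution operator attached to $\mathcal{M}_\tau(P)$ is convolution by the distribution $S$ with partial Fourier transform $\widetilde{S}_\tau(y)=P^\tau\,\widetilde{\mathscr{L}}_{\mathbf{0}}^{(\mathbf{0})}(y,\tau)=\frac{4|\tau|^2}{\pi^2}\,P^\tau\,e^{-|\tau||y|^2}$, which is a bounded $L^2$--kernel by the estimates of Lemma~\ref{lem:L2}, as in Theorem~\ref{thm:app}. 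Taking the inverse partial Fourier transform, passing to polar coordinates $\tau=|\tau|\dot\tau$ on $\mathbb{R}^3$ (so $d\tau=|\tau|^2\,d|\tau|\,d\dot\tau$ and $P^\tau=P^{\dot\tau}$), and evaluating the radial integral by $\int_0^\infty|\tau|^{4}e^{-|\tau|(|y|^2-i\dot\tau\cdot s)}\,d|\tau|=\Gamma(5)(|y|^2-i\dot\tau\cdot s)^{-5}$ (valid for $y\neq0$), one obtains $S(y,s)=\frac{\Gamma(5)}{(2\pi)^3}\cdot\frac{4}{\pi^2}\int_{S^2}\frac{P^\tau}{(|y|^2-i\tau\cdot s)^5}\,d\tau$; since $\Gamma(5)=2^{3}\cdot3$ and $(2\pi)^3\pi^2=\frac14(2\pi)^5$, this is exactly the asserted formula. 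For $(y,s)\neq(0,0)$ with $y=0$ the same computation applies after deforming the contour of the $|\tau|$--integral, as in the proof of Theorem~\ref{thm:fundamental}.

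The main obstacle is the middle step: diagonalising $\widetilde{\mathscr{D}_b^{(k)}}$ against the Laguerre basis and pinning down its kernel together with the explicit vector $e_1^\tau$. The shift relations of Lemma~\ref{lem:Z-bar-partial-L} couple the two oscillator factors $j=1,2$ with the $k+1$ vector components, so the recursion genuinely encodes the branching of the spin representation $\mathbb{C}^{k+1}$; once it is solved, the remaining steps (one Gamma integral and bookkeeping of constants) are routine.
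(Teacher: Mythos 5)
Your overall architecture --- reduce to the twisted picture at each fixed $\tau$, identify the Laguerre tensor of $P$ as a single entry $P^\tau$ at position $(\mathbf{1},\mathbf{1})$, and then evaluate one Gamma integral in polar coordinates --- matches the paper, and your final bookkeeping of constants is correct. But the step you yourself call the heart of the proof is both left uncarried out and stated with the wrong answer, and the error propagates into a non sequitur. The kernel of $\widetilde{\Box}_b$ (equivalently, the $\tau$-fibre of $\mathcal{A}(\mathscr{H},\mathbb{C}^{k+1})$) is \emph{not} the one-dimensional space $\mathbb{C}\,\widetilde{\mathscr L}_{\mathbf{0}}^{(\mathbf{0})}\otimes e_1^\tau$: by Proposition \ref{prop:ker} it is $V_{\mathbf{1}}^\tau\otimes e_1^\tau$, where $V_{\mathbf{1}}^\tau$ is the \emph{infinite-dimensional} span of $\{\widetilde{\mathscr L}_{\mathbf{0}}^{(\mathbf{q}-\mathbf{1})}:\mathbf{q}\in\mathbb{Z}_+^2\}$; only the lower Laguerre index is pinned to $\mathbf{0}$, the upper index is free (this is the analogue of the fact that the Hardy space at a fixed frequency on the Heisenberg group is infinite-dimensional, not a single vacuum vector). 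Your ``hence'' is therefore doubly broken: the premise is false, and even if it were true the conclusion would not follow, because the orthogonal projection of $L^2(\mathbb{R}^4)$ onto a one-dimensional subspace is a rank-one operator and is \emph{not} right twisted convolution by $\widetilde{\mathscr L}_{\mathbf{0}}^{(\mathbf{0})}$. What makes $\widetilde S_\tau=P^\tau\,\widetilde{\mathscr L}_{\mathbf{0}}^{(\mathbf{0})}$ correct is precisely that $\cdot\,*_\tau\widetilde{\mathscr L}_{\mathbf{0}}^{(\mathbf{0})}$ is, by (\ref{eq:L-k-1-0}), the projection onto the infinite-dimensional $V_{\mathbf{1}}^\tau$: the single nonzero entry of the Laguerre tensor encodes an infinite-rank projection.

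On how the kernel is actually found: the paper does not diagonalise the first-order system $\mathscr D_b$ against the Laguerre basis as you propose (the expression of $\mathscr D_b$ in the fields $Z_j^\tau,\overline{Z_j^\tau}$ is never even written down here). It passes to $\Box_b=\mathscr D_b^*\mathscr D_b=4\Delta_bM_k+8D_k$, uses that $\widetilde{\Delta}_b$ acts on $\widetilde{\mathscr L}_{\mathbf{k}\wedge\mathbf{p}-\mathbf{1}}^{(\mathbf{k}-\mathbf{p})}$ as the scalar $2|\tau|(|\mathbf{p}|-1)$ --- an eigenvalue depending only on the upper index, which is exactly why the whole family with $\mathbf{p}=\mathbf{1}$ survives --- and then invokes the linear-algebra input of Proposition \ref{prop:eigen} that $|\tau|M_k+D_k^\tau$ is semi-positive with one-dimensional kernel $\mathbb{C}e_1^\tau$. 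If you insist on the first-order route you must actually solve your recursion, and you will find the infinite family above rather than a single lowest-weight vector; you would also still need the duality step (testing $\langle g,\psi\rangle=\langle Pg,\psi\rangle$ against kernel elements with $C_0^\infty$ frequency cutoffs $\sigma(\tau)$, as the paper does) to pass from the fibrewise description of $\ker\widetilde{\Box}_b$ to the identity $(\widetilde{Pg})_\tau=P^\tau(\widetilde g_\tau)*_\tau\widetilde{\mathscr L}_{\mathbf{0}}^{(\mathbf{0})}$, a step your proposal elides.
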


In the main step of the proof of this theorem, the group Fourier transformation is used. This part can be simplified by using  the Laguerre calculus as follows.
To find the kernel of $P,$  we consider the associated operator of the second order  $\Box_{b}:=\mathscr{D}_{b}^{*}
\mathscr{D}_{b},$  where $\mathscr{D}_{b}^{*}$ is the adjoint operator of $\mathscr{D}_{b}.$    The explicit expression of the associated operator $\Box_b $ is known~\cite{SW} as
\begin{equation*}
   \Box_{b}
= 4\Delta_{b} \cdot M_{k}+8D_k,
\end{equation*}
where the  sub-Laplacian $\Delta_{b} =- \frac 14 \sum_{j=1}^{4}Y_{j}^{2}$ is different from that in \cite{SW} with a factor $\frac 14$, and \begin{align*}\label{ek}
M_{k}:=\left(\begin{smallmatrix} 1& &&&\\ & 2 &&&\\&&\ddots&&\\ & && 2 &\\ &&&&1\end{smallmatrix}\right)
\end{align*} and
 \begin{equation*}
 D_k=\left(\begin{smallmatrix} i\partial_{s_{1}}& \partial_{s_{2}}-i\partial_{s_{3}}&0&\cdots&0&0&0\\-\partial_{s_{2}}
-i\partial_{s_{3}}& 0 &\partial_{s_{2}}
-i\partial_{s_{3}}&\cdots&0&0&0\\0&-\partial_{s_{2}}
-i\partial_{s_{3}}& 0 &\cdots&0&0&0\\ \vdots&\vdots&\vdots&\ddots&\vdots&\vdots&\vdots
\\0&0&0&\cdots& 0 &\partial_{s_{2}}
-i\partial_{s_{3}}&0 \\0&0&0&\cdots&-\partial_{s_{2}}
-i\partial_{s_{3}}&0&\partial_{s_{2}}-i\partial_{s_{3}}\\
0&0&0&\cdots&0&-\partial_{s_{2}}
-i\partial_{s_{3}}&-i\partial_{s_{1}}\end{smallmatrix}\right)
 \end{equation*}
  are $(k+1)\times(k+1)$ matrices. We know that
\begin{align*}\label{5.3}\mathcal{A}(\mathscr{H},\mathbb{C}^{k+1})=
{\rm ker}\, \Box_{b}:=\left\{f\in L^{2}(\mathscr{H},\mathbb{C}^{k+1});\mathscr{D}_{b}f\in L^{2}(\mathscr{H},\mathbb{C}^{k+1}),\mathscr{D}_{b}^*
\mathscr{D}_{b}f=0\right\}
\end{align*}by Corollary 4.1 in \cite{SW},
where $\mathscr{D}_{b}^*\mathscr{D}_{b}f=0$ holds in the sense of distributions.
  \begin{prop}\label{prop:eigen} {\rm (Lemma 3.1 in \cite{SW})} The matrix $|\tau|M_k+D_k^{{\tau}}$
for any ${\tau}\in \mathbb{R}^3\setminus \{0\}$ is semi-positive with only one eigenvalue vanishing, whose eigenvector is
\begin{equation}\label{e1}
e_{1}^{\tau}:=\frac{1}{\gamma}\cdot\left(\begin{array}{l}
(1+\tau_{1})^{k}\\
({1+\tau_{1}})^{k-1}({i\tau_{2}-\tau_{3}})\\\ \ \ \ \ \ \   \vdots\\
(1+\tau_{1})({i\tau_{2}-\tau_{3}})^{k-1}\\
({i\tau_{2}-\tau_{3}})^{k}
\end{array}\right)\quad {\rm with}\quad \gamma=\left(\sum_{j=0}^k(1+\tau_1)^{2k-j}
(1-\tau_1)^{j}\right)^{\frac{1}{2}}
\end{equation}
if $\tau\neq(-1,0,0);$ and $e_{1}^{\tau}:=(0,\cdots,0,1)^t,$ if $\tau=(-1,0,0).$
\end{prop}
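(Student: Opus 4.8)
The plan is to reduce the claim to an explicit $(k+1)\times(k+1)$ Hermitian tridiagonal matrix and to analyze it by a sum-of-squares decomposition. First I would form the partial symbol $D_k^\tau$ using the substitution $\partial_{s_\beta}\mapsto i\tau_\beta$ from Section~5: one gets $i\partial_{s_1}\mapsto-\tau_1$, $-i\partial_{s_1}\mapsto\tau_1$, $\partial_{s_2}-i\partial_{s_3}\mapsto \tau_3+i\tau_2=:w$ and $-\partial_{s_2}-i\partial_{s_3}\mapsto\tau_3-i\tau_2=\bar w$. Hence $A:=|\tau|M_k+D_k^\tau$ is the Hermitian tridiagonal matrix with diagonal $(|\tau|-\tau_1,2|\tau|,\dots,2|\tau|,|\tau|+\tau_1)$, constant superdiagonal $w$ and constant subdiagonal $\bar w$. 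The single arithmetic fact driving the whole argument is $|w|^2=\tau_2^2+\tau_3^2=(|\tau|-\tau_1)(|\tau|+\tau_1)$. Because $|s\tau|M_k+D_k^{s\tau}=s\,(|\tau|M_k+D_k^\tau)$ for $s>0$, it suffices to argue for $\tau\in S^2$, where $|\tau|=1$ and formula \eqref{e1} applies verbatim.

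For semi-positivity I would exhibit a genuine sum of squares. I choose scalars $a_j,b_j$ ($0\le j\le k-1$) with $|a_j|^2=|\tau|-\tau_1$, $|b_j|^2=|\tau|+\tau_1$ and relative phase fixed by $\bar a_j b_j=w$; such a choice exists precisely because $|a_j||b_j|=|w|$ by the identity above. Expanding the quadratic form and using $|b_{j-1}|^2+|a_j|^2=2|\tau|$ for the interior diagonal entries gives
\[
\langle A\xi,\xi\rangle=\sum_{j=0}^{k-1}\bigl|a_j\xi_j+b_j\xi_{j+1}\bigr|^2\ge0,
\]
so $A$ is positive semi-definite. This handles all $\tau$ with $\tau_1^2<|\tau|^2$, where $a_j,b_j\neq0$; the two exceptional directions $\tau=(\pm|\tau|,0,0)$ give $w=0$ and a diagonal $A$ with a single vanishing entry, for which the statement, together with the special value $e_1^\tau=(0,\dots,0,1)^t$ at $\tau=(-1,0,0)$, is immediate.

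To identify the kernel I would use that for a positive semi-definite Hermitian $A$ one has $A\xi=0$ iff $\langle A\xi,\xi\rangle=0$. By the decomposition this forces $a_j\xi_j+b_j\xi_{j+1}=0$ for every $j$, and since $b_j\neq0$ this recursion determines $\xi_1,\dots,\xi_k$ from $\xi_0$, so $\dim\ker A\le1$. It then remains to verify that \eqref{e1} lies in $\ker A$, i.e.\ that the entries $v_j=(1+\tau_1)^{k-j}(i\tau_2-\tau_3)^j$ satisfy $Av=0$. Writing $i\tau_2-\tau_3=-\bar w$, the first row collapses to $(1-\tau_1)(1+\tau_1)-w\bar w=0$, the last row to $\bar w+(-\bar w)=0$, and each interior row, after factoring out $(1+\tau_1)^{k-j}(-\bar w)^j$ and using $|w|^2=(1-\tau_1)(1+\tau_1)$, to the scalar identity $-(1+\tau_1)+2-(1-\tau_1)=0$. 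Hence $\ker A=\mathbb{C}\,e_1^\tau$, so $A$ has exactly one zero eigenvalue with eigenvector $e_1^\tau$; the stated $\gamma$ is precisely $\|v\|$, since $\sum_j(1+\tau_1)^{2(k-j)}|i\tau_2-\tau_3|^{2j}=\sum_j(1+\tau_1)^{2k-j}(1-\tau_1)^j$ by $|i\tau_2-\tau_3|^2=1-\tau_1^2$.

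I expect the only delicate bookkeeping to be the construction of the phases $a_j,b_j$ and the treatment of the degenerate directions $\tau=(\pm|\tau|,0,0)$; these are a matter of careful statement rather than a genuine obstruction, because the magnitudes $|a_j|^2=|\tau|-\tau_1$ and $|b_j|^2=|\tau|+\tau_1$ propagate consistently from the two boundary diagonal entries thanks to $|w|^2=(|\tau|-\tau_1)(|\tau|+\tau_1)$. Once the sum-of-squares form is in hand, semi-positivity and the one-dimensionality of the kernel come out together, and checking that \eqref{e1} spans the kernel is the short three-case computation above.
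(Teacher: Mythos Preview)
The paper does not prove this proposition; it is quoted verbatim from \cite{SW} (Lemma~3.1 there) and used as a black box in the proof of Proposition~\ref{prop:ker}. Your argument therefore supplies a self-contained proof where the paper has none, and it is correct. The sum-of-squares (essentially Cholesky) factorization $\langle A\xi,\xi\rangle=\sum_{j=0}^{k-1}|a_j\xi_j+b_j\xi_{j+1}|^2$ with $|a_j|^2=|\tau|-\tau_1$, $|b_j|^2=|\tau|+\tau_1$, $\bar a_jb_j=w$ is exactly what the tridiagonal structure together with the identity $|w|^2=(|\tau|-\tau_1)(|\tau|+\tau_1)$ invites, and it yields positivity, $\dim\ker A\le 1$, and the explicit recursion for the kernel in one stroke. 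Your three-row verification that $v_j=(1+\tau_1)^{k-j}(i\tau_2-\tau_3)^j$ is annihilated and the computation $\|v\|=\gamma$ are both accurate.

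One notational wrinkle is worth flagging. You build $D_k^\tau$ by the symbol rule $\partial_{s_\beta}\mapsto i\tau_\beta$, obtaining the Hermitian matrix with diagonal $(-\tau_1,0,\dots,0,\tau_1)$ and off-diagonals $w,\bar w$. The paper, in the proof of Proposition~\ref{prop:ker}, instead writes $D_k^\tau$ with entries $i\tau_1,\ \tau_2-i\tau_3,\ -\tau_2-i\tau_3$ (i.e.\ $\partial_{s_\beta}\mapsto\tau_\beta$) and takes the relevant semi-positive matrix to be $|\tau|M_k+iD_k^\tau$; the statement of Proposition~\ref{prop:eigen} itself omits that $i$, which is a typo. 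Your $A$ coincides with the paper's $|\tau|M_k+iD_k^\tau$, so you are analyzing the right object; it would be worth saying this explicitly in your write-up to avoid confusion with the paper's displayed $D_k^\tau$.
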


We identify $L^2(\mathbb{R}^{4 },\mathbb{C}^{k+1})$ with $L^2(\mathbb{R}^{4 }) \otimes\mathbb{C}^{k+1} $. For fixed ${\tau}\in \mathbb{R}^3\setminus \{0\}$, we write $ \widetilde{ \mathscr L}_{\mathbf{p}-1 }^{(\mathbf{0} )}(\cdot,\tau)$ as $\widetilde{ \mathscr L}_{\mathbf{p}-1 }^{(\mathbf{0} )}$ in the sequel. Define
\begin{equation*}
   V_{\mathbf{p}}^\tau:= L^2(\mathbb{R}^{4 })*_\tau\widetilde{ \mathscr L}_{\mathbf{p}-1  }^{(\mathbf{0} )}  .
\end{equation*}

Recall that $\mu_j(   \tau )=|\tau|$ for  the quaternionic Heisenberg group. Since $\widetilde{ \mathscr L}_{\mathbf{p}-1 }^{(\mathbf{0} )}(\cdot,\tau )$ is a
rapidly decreasing smooth function for fixed $\tau\neq 0$ by definition,
\begin{equation*}
   \Pi_\tau:= *_\tau \widetilde{ \mathscr L}_{\mathbf{p}-1 }^{(\mathbf{0} )} : L^2(\mathbb{R}^{4 },\mathbb{C}^{k+1})\rightarrow L^2(\mathbb{R}^{4 },\mathbb{C}^{k+1})
\end{equation*}
is a bounded operator, Moreover, it is a projection, i.e.
$\Pi_\tau^2=\Pi_\tau$,  because of
\begin{equation*}
\left(  f*_\tau  \widetilde{ \mathscr L}_{\mathbf{p}-1 }^{(\mathbf{0} )}\right)*_\tau\widetilde{ \mathscr L}_{\mathbf{p}-1 }^{(\mathbf{0} )} =f*_\tau\left(\widetilde{ \mathscr L}_{\mathbf{p}-1 }^{(\mathbf{0} )}*_\tau \widetilde{ \mathscr L}_{\mathbf{p}-1 }^{(\mathbf{0} )}\right) = f*_\tau \widetilde{ \mathscr L}_{\mathbf{p}-1 }^{(\mathbf{0} )}.
\end{equation*} Here we used Proposition \ref{prop:vector-tau} (3).
Note that
\begin{equation}\label{eq:L-k-1-0}\begin{split}
   &   \widetilde{ \mathscr L}_{\mathbf{k}\wedge \mathbf{p}'-\mathbf{1} }^{(\mathbf{k }-\mathbf{ p}' )}*_\tau\widetilde{ \mathscr L}_{\mathbf{p}-1 }^{(\mathbf{0} )}=\delta_{\mathbf{ p}'}^{(\mathbf{ p} )} \widetilde{ \mathscr L}_{\mathbf{k}\wedge \mathbf{p}-\mathbf{1} }^{(\mathbf{k }-\mathbf{ p} )},
\end{split}\end{equation}
by  Proposition \ref{prop:product}. Therefore
\begin{equation*}
   V_{\mathbf{p}}^\tau =\mbox{   span    } \left \{\widetilde{ \mathscr L}_{\mathbf{k}\wedge \mathbf{p}-\mathbf{1} }^{(\mathbf{k }-\mathbf{ p} )};\mathbf{ k}\in \mathbb{Z}_+^2 \right\},
\end{equation*} which is an infinite dimensional space.
  Then the following decomposition   follows from the fact that $ \{\widetilde{ \mathscr L}_{\mathbf{k}\wedge \mathbf{p}-\mathbf{1} }^{(\mathbf{k }-\mathbf{ p} )}  \}$ in (\ref{eq:exponential-Laguerre}) is a    basis of $L^2(\mathbb{R}^{4})$ for any fixed $\tau\in \mathbb{R}^r\setminus \{0\}$.
\begin{prop} For any ${\tau}\in \mathbb{R}^3\setminus \{0\}$, we have
   \begin{equation*}\label{eq:decomposition}
      L^2(\mathbb{R}^{4 },\mathbb{C}^{k+1}) \cong \bigoplus_{ |\mathbf{p}|=1 }^\infty V_{\mathbf{p}}^\tau\otimes \mathbb{C}^{k+1}.
   \end{equation*}
\end{prop}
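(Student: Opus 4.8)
The plan is to reduce the statement to a combinatorial fact about the index set of the Laguerre basis and then read off the orthogonal decomposition. First, $L^2(\mathbb{R}^{4},\mathbb{C}^{k+1})$ is canonically identified with $L^2(\mathbb{R}^{4})\otimes\mathbb{C}^{k+1}$, and the operator $\Pi_\tau = \ast_\tau\widetilde{\mathscr L}_{\mathbf p-\mathbf 1}^{(\mathbf 0)}$ acts only on the $L^2(\mathbb{R}^{4})$-factor, so $V_{\mathbf p}^\tau\otimes\mathbb{C}^{k+1}$ is exactly the image of $\Pi_\tau\otimes\mathrm{id}$. Hence it suffices to prove the orthogonal Hilbert-space decomposition $L^2(\mathbb{R}^{4})\cong\bigoplus_{|\mathbf p|=1}^\infty V_{\mathbf p}^\tau$; tensoring with $\mathbb{C}^{k+1}$ then yields the claim.

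Next I would recall, from the discussion immediately preceding the proposition, that $V_{\mathbf p}^\tau$ is the range of the bounded idempotent $\Pi_\tau$ (hence a closed subspace) and, by (\ref{eq:L-k-1-0}), that $V_{\mathbf p}^\tau=\overline{\operatorname{span}}\{\widetilde{\mathscr L}_{\mathbf k\wedge\mathbf p-\mathbf 1}^{(\mathbf k-\mathbf p)};\ \mathbf k\in\mathbb{Z}_+^2\}$; and that $\{\widetilde{\mathscr L}_{\mathbf k\wedge\mathbf p-\mathbf 1}^{(\mathbf k-\mathbf p)}\}_{\mathbf k,\mathbf p\in\mathbb{Z}_+^2}$ is an orthogonal basis of $L^2(\mathbb{R}^{4})$ (established in the proof of Theorem \ref{thm:Laguerre calculus}, this being just a reparametrization of $\{\widetilde{\mathscr L}_{\mathbf r}^{(\mathbf m)}\}_{\mathbf r\in\mathbb{Z}_{\geq0}^2,\ \mathbf m\in\mathbb{Z}^2}$). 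The whole proposition then follows once I check that the index map $\Theta\colon(\mathbf k,\mathbf p)\mapsto(\mathbf k\wedge\mathbf p-\mathbf 1,\ \mathbf k-\mathbf p)$ is a bijection from $\mathbb{Z}_+^2\times\mathbb{Z}_+^2$ onto $\mathbb{Z}_{\geq0}^2\times\mathbb{Z}^2$. This is componentwise elementary: given $(r,m)\in\mathbb{Z}_{\geq0}\times\mathbb{Z}$, the unique $(k,p)\in\mathbb{Z}_+\times\mathbb{Z}_+$ with $\min(k,p)-1=r$ and $k-p=m$ is $(k,p)=(r+1+m,\,r+1)$ when $m\geq0$ and $(k,p)=(r+1,\,r+1-m)$ when $m<0$. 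Consequently, for $\mathbf p\neq\mathbf p'$ the subsets $\{\Theta(\mathbf k,\mathbf p):\mathbf k\in\mathbb{Z}_+^2\}$ and $\{\Theta(\mathbf k',\mathbf p'):\mathbf k'\in\mathbb{Z}_+^2\}$ are disjoint, so $V_{\mathbf p}^\tau\perp V_{\mathbf p'}^\tau$; and their union over all $\mathbf p\in\mathbb{Z}_+^2$ is all of $\mathbb{Z}_{\geq0}^2\times\mathbb{Z}^2$, so the closed span of $\bigcup_{\mathbf p}V_{\mathbf p}^\tau$ is the closed span of the full orthogonal Laguerre basis, namely $L^2(\mathbb{R}^{4})$. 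This gives $L^2(\mathbb{R}^{4})=\bigoplus_{|\mathbf p|=1}^\infty V_{\mathbf p}^\tau$, and tensoring with $\mathbb{C}^{k+1}$ concludes.

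I do not anticipate a genuine obstacle here; the two points that merit a line of care are the closedness of $V_{\mathbf p}^\tau$ (so that "span" becomes a bona fide orthogonal summand), which is automatic because $V_{\mathbf p}^\tau=\operatorname{ran}\Pi_\tau$ with $\Pi_\tau$ a bounded projection, and the bookkeeping that $\Theta$ is \emph{onto} the full index set of the Laguerre basis and \emph{injective}, rather than landing in a proper subset or collapsing distinct pairs — which is exactly what the componentwise inversion formula above verifies.
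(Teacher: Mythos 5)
Your argument is correct and is essentially the paper's: the paper simply asserts that the decomposition follows from the fact that $\{\widetilde{\mathscr L}_{\mathbf{k}\wedge\mathbf{p}-\mathbf{1}}^{(\mathbf{k}-\mathbf{p})}\}_{\mathbf{k},\mathbf{p}\in\mathbb{Z}_+^2}$ is an orthogonal basis of $L^2(\mathbb{R}^4)$ and that $V_{\mathbf{p}}^\tau$ is the span of the subfamily with second index $\mathbf{p}$. You have merely made explicit the reindexing bijection $(\mathbf{k},\mathbf{p})\mapsto(\mathbf{k}\wedge\mathbf{p}-\mathbf{1},\,\mathbf{k}-\mathbf{p})$ and the closedness of $\operatorname{ran}\Pi_\tau$, both of which the paper leaves implicit.
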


 \begin{prop}\label{prop:ker} We have
\begin{equation}\label{eq:ker}
  \ker \widetilde{\Box}_{b}= V_{\mathbf{1}}^\tau\otimes e_{1}^{{\tau}},
\end{equation}where $V_{\mathbf{1}}^\tau$ is spanned by $   \{\widetilde{ \mathscr L}_{\mathbf{0} }^{(\mathbf{k }-\mathbf{ 1} )};\mathbf{ k}\in \mathbb{Z}_+^2  \}$.
\end{prop}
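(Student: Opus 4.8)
The plan is to apply the partial Fourier transform in $s$ and turn $\widetilde{\Box}_b$ into an operator that is block-diagonal with respect to the decomposition $L^2(\mathbb{R}^{4},\mathbb{C}^{k+1})=\bigoplus_{\mathbf p}V_{\mathbf p}^\tau\otimes\mathbb{C}^{k+1}$ established just above, each block acting on the $\mathbb{C}^{k+1}$-factor as a fixed $(k+1)\times(k+1)$ matrix that can be diagonalised by Proposition~\ref{prop:eigen}. From $\Box_b=4\Delta_b M_k+8D_k$ one gets the partial symbol $\widetilde{\Box}_b=4\widetilde{\Delta}_b M_k+8\widetilde{D}_k$, where $\widetilde{D}_k$ is exactly the matrix $D_k^\tau$ of Proposition~\ref{prop:eigen} (a direct check from the definition of the partial symbol, $\partial_{s_\beta}\mapsto i\tau_\beta$); since the entries of $M_k$ and $\widetilde{D}_k$ do not involve $y$, these matrices act only on the $\mathbb{C}^{k+1}$-factor and commute both with $\widetilde{\Delta}_b$ and with the twisted convolution $*_\tau$.

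The key step is to show that $\widetilde{\Delta}_b$ acts on each $V_{\mathbf p}^\tau$ as multiplication by the scalar $\lambda_{\mathbf p}(\tau):=\sum_{j=1}^{2}\mu_j(\tau)(2p_j-1)=2|\tau|(|\mathbf p|-1)$, where $|\mathbf p|:=p_1+p_2$ and $\mu_1(\tau)=\mu_2(\tau)=|\tau|$ on the quaternionic Heisenberg group. Indeed, (\ref{eq:subL-Laguerre}) gives $\widetilde{\Delta}_b\,\widetilde{\mathscr L}_{\mathbf p-\mathbf 1}^{(\mathbf 0)}=\lambda_{\mathbf p}(\tau)\,\widetilde{\mathscr L}_{\mathbf p-\mathbf 1}^{(\mathbf 0)}$, and since $\widetilde{\Delta}_b=-\frac14\sum_a\widetilde{Y_a}\widetilde{Y_a}$ is built from partial symbols of left-invariant vector fields, applying Proposition~\ref{prop:vector-tau}(2) twice gives $\widetilde{\Delta}_b(f*_\tau\widetilde{\mathscr L}_{\mathbf p-\mathbf 1}^{(\mathbf 0)})=f*_\tau(\widetilde{\Delta}_b\widetilde{\mathscr L}_{\mathbf p-\mathbf 1}^{(\mathbf 0)})=\lambda_{\mathbf p}(\tau)\,(f*_\tau\widetilde{\mathscr L}_{\mathbf p-\mathbf 1}^{(\mathbf 0)})$ for all $f$; equivalently, on the Laguerre basis, $\widetilde{\Delta}_b\widetilde{\mathscr L}_{\mathbf k\wedge\mathbf p-\mathbf 1}^{(\mathbf k-\mathbf p)}=\lambda_{\mathbf p}(\tau)\widetilde{\mathscr L}_{\mathbf k\wedge\mathbf p-\mathbf 1}^{(\mathbf k-\mathbf p)}$ by Proposition~\ref{prop:product}. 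Therefore $\widetilde{\Box}_b$ restricted to $V_{\mathbf p}^\tau\otimes\mathbb{C}^{k+1}$ equals $8\,(\mathrm{id}_{V_{\mathbf p}^\tau}\otimes A_{\mathbf p}^\tau)$ with $A_{\mathbf p}^\tau:=(|\mathbf p|-1)|\tau|M_k+D_k^\tau$. Since the summands $V_{\mathbf p}^\tau\otimes\mathbb{C}^{k+1}$ are mutually orthogonal, exhaust $L^2(\mathbb{R}^4,\mathbb{C}^{k+1})$, and are each preserved by $\widetilde{\Box}_b$, a short expansion argument against the Laguerre basis (each space $\widetilde{\mathscr L}_{\mathbf k\wedge\mathbf p-\mathbf 1}^{(\mathbf k-\mathbf p)}\otimes\mathbb{C}^{k+1}$ being finite-dimensional, so the distributional equation $\widetilde{\Box}_b f=0$ reduces to a finite-rank linear condition on each component) yields $\ker\widetilde{\Box}_b=\bigoplus_{\mathbf p}V_{\mathbf p}^\tau\otimes\ker A_{\mathbf p}^\tau$.

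It remains to compute $\ker A_{\mathbf p}^\tau$ for $\mathbf p\in\mathbb{Z}_+^2$, so $|\mathbf p|\geq 2$ with equality iff $\mathbf p=\mathbf 1$. For $\mathbf p=\mathbf 1$ one has $A_{\mathbf 1}^\tau=|\tau|M_k+D_k^\tau$, whose kernel is precisely $\mathbb{C}e_1^\tau$ by Proposition~\ref{prop:eigen}. For $\mathbf p\neq\mathbf 1$ write $A_{\mathbf p}^\tau=(|\mathbf p|-2)|\tau|M_k+(|\tau|M_k+D_k^\tau)$: the first summand is positive definite (as $|\mathbf p|-2\geq 1$, $|\tau|>0$ and $M_k$ is a positive diagonal matrix) and the second is positive semidefinite by Proposition~\ref{prop:eigen}, so $A_{\mathbf p}^\tau$ is positive definite and $\ker A_{\mathbf p}^\tau=\{0\}$. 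Hence only the $\mathbf p=\mathbf 1$ summand survives and $\ker\widetilde{\Box}_b=V_{\mathbf 1}^\tau\otimes\mathbb{C}e_1^\tau=V_{\mathbf 1}^\tau\otimes e_1^\tau$, which is (\ref{eq:ker}).

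I expect the main difficulty to be bookkeeping rather than ideas: pinning down the numerical constants in $\Box_b=4\Delta_b M_k+8D_k$ and the identification $\widetilde{D}_k=D_k^\tau$, and carefully justifying that $\widetilde{\Delta}_b$ acts as the \emph{same} scalar on all of $V_{\mathbf p}^\tau$ rather than just on the single element $\widetilde{\mathscr L}_{\mathbf p-\mathbf 1}^{(\mathbf 0)}$ --- this is exactly where Proposition~\ref{prop:vector-tau}(2), i.e. left-invariance of $\widetilde{\Delta}_b$ showing up as action on the right $*_\tau$-factor, is used (a mild density/smoothness remark is needed to apply it with $f\in L^2$ and $\widetilde{\mathscr L}_{\mathbf p-\mathbf 1}^{(\mathbf 0)}$ Schwartz). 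Once that scalarity is established, the positivity splitting for $\mathbf p\neq\mathbf 1$ is immediate, and the statement follows from Propositions~\ref{prop:product}, \ref{prop:vector-tau}, \ref{prop:eigen} and (\ref{eq:subL-Laguerre}).
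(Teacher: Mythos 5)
Your proposal is correct and follows essentially the same route as the paper: compute the partial symbol $\widetilde{\Box}_b=4\widetilde{\triangle}_b M_k+8\widetilde{D}_k$, use Proposition~\ref{prop:vector-tau}(2) together with (\ref{eq:subL-Laguerre}) and (\ref{eq:L-k-1-0}) to show $\widetilde{\triangle}_b$ acts as the scalar $2|\tau|(|\mathbf p|-1)$ on all of $V_{\mathbf p}^\tau$, and then reduce to the $(k+1)\times(k+1)$ matrix $(|\mathbf p|-1)|\tau|M_k+D_k^\tau=(|\mathbf p|-2)|\tau|M_k+(|\tau|M_k+D_k^\tau)$, whose kernel Proposition~\ref{prop:eigen} identifies. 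The only caveat is the factor of $i$ in the identification of $\widetilde{D}_k$ with $D_k^\tau$, a bookkeeping point on which the paper itself is not entirely consistent and which you already flag.
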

\begin{proof} By definition, the   partial symbol of $\Box_{b} $ is
$
  4M_{k}\widetilde{\triangle_b}+8iD_k^{\tau}
$
where
 \begin{align*}\label{dk}
D_{k}^{\tau}:=\left(\begin{smallmatrix} i\tau_{1}& \tau_{{2}}-i\tau_{{3}}&0&\cdots&0&0&0\\-\tau_{{2}}
-i\tau_{{3}}& 0 &\tau_{{2}}
-i\tau_{{3}}&\cdots&0&0&0\\0&-\tau_{{2}}
-i\tau_{{3}}& 0 &\cdots&0&0&0\\ \vdots&\vdots&\vdots&\ddots&\vdots&\vdots
&\vdots\\0&0&0&\cdots&0&\tau_{{2}}
-i\tau_{{3}}&0\\0&0&0&\cdots&-\tau_{{2}}
-i\tau_{{3}}& 0 &\tau_{{2}}
-i\tau_{{3}} \\0&0&0&\cdots&0&-\tau_{{2}}
-i\tau_{{3}}&-i\tau_{{1}}\end{smallmatrix}\right)
\end{align*}
is a $(k+1)\times(k+1)$ matrix for $\tau\in\mathbb{R}^3.$ Note that
\begin{equation*}\begin{split}
 \widetilde{\triangle}_{b} \widetilde{ \mathscr L}_{\mathbf{k}\wedge \mathbf{p}-\mathbf{1} }^{(\mathbf{k }-\mathbf{ p} )}   &=\widetilde{\triangle}_{b}\left (\widetilde{ \mathscr L}_{\mathbf{k}\wedge \mathbf{p}-\mathbf{1} }^{(\mathbf{k }-\mathbf{ p} )}*_\tau\widetilde{ \mathscr L}_{\mathbf{p}-1 }^{(\mathbf{0} )}\right) =\widetilde{ \mathscr L}_{\mathbf{k}\wedge \mathbf{p}-\mathbf{1} }^{(\mathbf{k }-\mathbf{ p} )} *_\tau\left(
 \widetilde{\triangle}_{b} \widetilde{ \mathscr L}_{\mathbf{p}-1 }^{(\mathbf{0} )}
 \right)\\&=2|\tau|  ( |\mathbf{ p} |-1) \widetilde{ \mathscr L}_{\mathbf{k}\wedge \mathbf{p}-\mathbf{1} }^{(\mathbf{k }-\mathbf{ p} )}*_\tau\widetilde{ \mathscr L}_{\mathbf{p}-1 }^{(\mathbf{0} )}=    2|\tau|  ( |\mathbf{ p} |-1) \widetilde{ \mathscr L}_{\mathbf{k}\wedge \mathbf{p}-\mathbf{1} }^{(\mathbf{k }-\mathbf{ p} )}
\end{split}\end{equation*}by using Proposition \ref{prop:vector-tau}, (\ref{eq:subL-Laguerre}) and (\ref{eq:L-k-1-0}), where $|\mathbf{ p} |=p_1+p_2$.
Thus for $v\in \mathbb{C}^{k+1}$,
\begin{equation*}\label{eq:coefficients}\begin{split}
   \left (4M_{k}\widetilde{\triangle_b}+8iD_k^{\tau}\right) \left(\widetilde{ \mathscr L}_{\mathbf{p}\wedge\mathbf{p}-1 }^{(\mathbf{p}-\mathbf{k} )}( y,\tau)\otimes v\right)  =  \widetilde{ \mathscr L}_{\mathbf{p}\wedge\mathbf{p}-1 }^{(\mathbf{p}-\mathbf{k} )}( y,\tau)\otimes 8\left ( |\tau|  ( |\mathbf{ p} |-1) M_{k} + iD_k^{\tau}\right)v  .
\end{split} \end{equation*}
The symmetric matrix in the right hand side is $ 8(|\tau| M_{k} + iD_k^{\tau})+8\sum_{j=1}^2     ( p_j-1) |\tau| M_{k} $. It follows from Proposition \ref{prop:eigen} that for $\mathbf{ p}\in \mathbb{Z}_+^2$ with $\mathbf{ p}\neq \mathbf{1}$, it has $k+1$ eigenvectors with positive eigenvalues, while for   $\mathbf{ p}= \mathbf{1}$, it has $k$
eigenvectors with positive eigenvalues and only one eigenvector $e_{1}^{{\tau}}$ with vanishing eigenvalue. In summary, we find a basis of $L^2(\mathbb{R}^{4 },\mathbb{C}^{k+1})$, consisting of smooth functions, such that $\widetilde{\Box}_{b}$ acts diagonally and (\ref{eq:ker}) holds.
\end{proof}

For any $g\in L^2(\mathscr{H},\mathbb{C}^{k+1})$, we have Laguerre expansions
\begin{equation}\label{eq:pg-tau}
    \widetilde{g}_\tau=\sum_{\mathbf{k} ,\mathbf{ p},j } G_{\mathbf{k}}^{\mathbf{p} }(\tau)\widetilde{ \mathscr L}_{\mathbf{k}\wedge \mathbf{p}-\mathbf{1} }^{(\mathbf{k }-\mathbf{ p} )} ,\qquad
{\rm  and }
 \qquad
  ( \widetilde{Pg})_\tau=\sum_{\mathbf{ q} } C_{ \mathbf{q}}^{\mathbf{1} } (\tau)\widetilde{ \mathscr L}_{\mathbf{0}  }^{(\mathbf{q }-\mathbf{ 1} )}\otimes e_{1}^{{\tau}},
\end{equation} by Proposition \ref{prop:ker}, where $G_{\mathbf{k}}^{\mathbf{p} } $ is   $\mathbb{C}^{k+1}$-valued and $C_{ \mathbf{q}}^{\mathbf{1} }  $ is scalar.
Now for any given $\sigma\in C_0^\infty (\mathbb{R}^3\setminus \{0\})$ and $\mathbf{q}\in \mathbb{Z}_+^2$, it is easy to see that $\psi$ as the inverse partial Fourier transformation of $\widetilde{\psi}_\tau$ given by
\begin{equation*}
   \widetilde{\psi}_\tau=\sigma(\tau) \widetilde{\mathscr  L}_{ \mathbf{0}      }^{(\mathbf{q}-\mathbf{ 1})} \otimes e_{1}^{{\tau}}\in  \ker \widetilde{\Box}_{b}
\end{equation*}
 by Proposition \ref{prop:ker}, is in $  L^2(\mathcal{N}) $  and satisfies $\psi\in \ker \Box_{b}$.
  Then $ \langle  g, \psi\rangle_{L^2(\mathbb{R}^7)} =  \langle Pg, \psi\rangle_{L^2(\mathbb{R}^7)} $ implies that
 \begin{equation*}
    \int_{\mathbb{R}^3} \langle \widetilde{ g}_\tau, \widetilde{\psi}_\tau \rangle_{L^2(\mathbb{R}^4)} d\tau=\int_{\mathbb{R}^3} \langle (\widetilde{Pg})_\tau, \widetilde{\psi}_\tau\ \rangle_{L^2(\mathbb{R}^4)} d\tau,
 \end{equation*}
   and so
\begin{equation*}\begin{split}\int_{\mathbb{R}^3}\left\langle G_{\mathbf{q}}^{\mathbf{1} }(\tau) ,e_{1}^{{\tau}} \right\rangle\sigma(\tau)\left(\frac 2\pi\right)^2|\tau|^2d\tau
=\int_{\mathbb{R}^3}C_{ \mathbf{q}}^{\mathbf{1} } (\tau)\sigma(\tau)\left(\frac 2\pi\right)^2|\tau|^2d\tau,
\end{split}\end{equation*}by using Lemma \ref{lem:L2}.
It follows  that $C_{ \mathbf{q}}^{\mathbf{1} }(\tau)=\left\langle G_{\mathbf{q}}^{\mathbf{1} }(\tau) ,e_{1}^{{\tau}} \right\rangle $ a.e. because  $\sigma(\tau)$ is an arbitrary $C_0^\infty(\mathbb{R}^3\setminus \{0\}) $ function.
Substitute it into (\ref{eq:pg-tau}) to get
\begin{equation*}\begin{split}
   ( \widetilde{Pg})_\tau&=\sum_{\mathbf{ q} }  \left\langle G_{\mathbf{q}}^{\mathbf{1} }(\tau) ,e_{1}^{{\tau}} \right\rangle \widetilde{ \mathscr L}_{\mathbf{0}  }^{(\mathbf{q }-\mathbf{ 1} )}\otimes e_{1}^{{\tau}} =P^\tau\left(\sum_{\mathbf{ q}   } G_{\mathbf{q}}^{\mathbf{1} }(\tau)\widetilde{ \mathscr L}_{\mathbf{0}  }^{(\mathbf{q }-\mathbf{ 1} )}   \right)\\&
   =P^\tau\left( \widetilde{g}_\tau*_\tau \widetilde{\mathscr  L}_{ \mathbf{0}      }^{(\mathbf{0})} \right)
  =  P^\tau\left(\widetilde{g}_\tau\right)*_\tau \widetilde{\mathscr  L}_{ \mathbf{0}      }^{(\mathbf{0})}  .
\end{split}\end{equation*}Here we use the property of the projection $*_\tau \widetilde{\mathscr  L}_{ \mathbf{0}      }^{(\mathbf{0})} $ in (\ref{eq:L-k-1-0}). By definition,  $\widetilde{\mathscr  L}_{ \mathbf{0}      }^{(\mathbf{0})}(  y,\tau)=\left(\frac {2 |\tau|}{\pi} \right)^2 e^{-|\tau||y|^2}$.
Thus for $g\in C_0^\infty((\mathbb{R}^7\setminus (\{0\})\times \mathbb{R}^3))$,  we get
\begin{equation*}\begin{split}
   Pg (x,t)&=  \frac{1}{(2\pi)^{3}}  \int_{\mathbb{R}^3} e^{it\tau} P^\tau\left(\widetilde{g}_\tau\right)*_\tau \widetilde{\mathscr  L}_{ \mathbf{0}      }^{(\mathbf{0})} (x) |\tau|^{2}d\tau \\&
 =  \frac{16 }{(2\pi)^{5}} \int_{\mathscr H}dyds\int_{\mathbb{R}^3} P^\tau g (x-y,s)e^{i (t-s) \tau- 2 i|\tau|B^{ \dot{{ \tau}}}(x, y)}   e^{-|\tau|  |  y|^2 } |\tau|^{ 2} d\tau
 \\&
 =  \frac{16 }{(2\pi)^{5}} \int_{\mathscr H} dyds\int_{S^2} d\dot{\tau} P^{\dot{\tau} }  g (y,s)\int_0^\infty r^4dr e^{-[i (-t+s) \dot{\tau}+2 i B^{\dot{ \tau}}(-y,x)+  | -y+ x |^2 ]r}\\&
 =g*S(x,t).
\end{split}\end{equation*}
with
\begin{equation*}
S(y,s):=  \frac{16\Gamma(5)}{(2\pi)^{5}} \int_{S^2}  \frac {P^{ {\tau} }}{ (     |y|^2 -i  s   \cdot {\tau} )^{  5}}d {\tau}, \qquad {\it for }\quad y\neq 0 .
\end{equation*}

\end{document}